\newtheorem{theorem}{Theorem}[section]
\newtheorem{corollary}[theorem]{Corollary}
\renewcommand{\S}{\mathcal{S}}
\renewcommand{\mod}[1]{\text{ (mod }#1)}
\newcommand{\RoundTrinomial}[4]{
\left(\begin{array}{c}#1 ,#2 \\ #3\end{array}; #4\right)_2
}
\newcommand{\Jacobi}[2]{\left( \frac{#1}{#2} \right)}
\numberwithin{equation}{section}
\begin{document}

\makeatletter
\def\imod#1{\allowbreak\mkern10mu({\operator@font mod}\,\,#1)}
\makeatother

\author{Alexander Berkovich}
   	\address{Department of Mathematics, University of Florida, 358 Little Hall, Gainesville FL 32611, USA}
   	\email{alexb@ufl.edu}

\author{Ali Kemal Uncu}
\address{University of Bath, Faculty of Science, Department of Computer Science, Bath, BA2\,7AY, UK}
\email{aku21@bath.ac.uk}

\thanks{Research of the second author is partly supported by EPSRC grant number EP/T015713/1 and partly by FWF grant P-34501-N}

\title[\scalebox{.9}{New infinite hierarchies of polynomial identities related to the Capparelli partition theorems}]{New infinite hierarchies of polynomial identities related to the Capparelli partition theorems}
     
\begin{abstract} We prove a new polynomial refinement of the Capparelli's identities. Using a special case of Bailey's lemma we prove many infinite families of sum-product identities that root from our finite analogues of Capparelli's identities. We also discuss the $q\mapsto 1/q$ duality transformation of the base identities and some related partition theoretic relations.
\end{abstract}

\keywords{Capparelli's identities, $q$-binomial identities, infinite hierarchies of $q$-series identities, Bailey's Lemma}
  
\subjclass[2010]{Primary 11B65; Secondary 11C08, 11P81, 11P82, 11P83, 11P84,  05A10, 05A15, 05A17}

%11C08		Polynomials
%11P81  	Elementary theory of partitions [See also 05A17]
%11P82  	Analytic theory of partitions
%11P83  	Partitions; congruences and congruential restrictions
%11P84  	Partition identities; identities of Rogers-Ramanujan type
%%11B65  	Binomial coefficients; factorials; $q$-identities
%05A10  	Factorials, binomial coefficients, combinatorial functions [See also 11B65, 33Cxx]
%05A15  	Exact enumeration problems, generating functions [See also 33Cxx, 33Dxx]
%05A17  	Partitions of integers [See also 11P81, 11P82, 11P83]

\date{\today}
   
%\dedicatory{}
   
\maketitle

\section{Introduction}\label{Sec:Intro}

Let $a$ and $q$ be variables and define the $q$-Pochhammer symbol $(a;q)_n :=(1-a)(1-aq)\dots(1-aq^{n-1})$ for any non-negative integer $n$. For $|q|<1$, we define $(a;q)_\infty := \lim_{n\rightarrow\infty} (a;q)_n$. For $a_i$ some finite number of variables we define the shorthand notation $(a_1,a_2,\dots,a_k;q)_n :=(a_1;q)_n(a_2;q)_n\dots (a_k;q)_n$. Finally note that $1/(q;q)_n = 0$ for all negative $n$.

A \textit{partition} of a positive integer $n$ is a non-increasing sequence of natural numbers whose sum is $n$ \cite{Theory_of_Partitions}. For example, the partitions of $4$ are $(4), (3,1), (2,2), (2,1,1),$ and $ (1,1,1,1)$. The sum of all the parts of a partition $\pi$ is called the \textit{size} of $\pi$ and it is denoted with $|\pi|$. The \textit{total number of parts} of a partition $\pi$ is denoted with $\#(\pi)$. We denote the set of all the partitions by $P$ and the set of all the partitions into distinct parts by $D$. Then, it is widely known that the generating function for the number of partitions into ordinary and distinct parts are given (both as a sum representation and as a product representation) as follows \begin{equation}\label{eq:GF_P_D}
\sum_{\pi\in P} q^{|\pi|} = \sum_{n\geq 0} \frac{q^n}{(q;q)_n} = \frac{1}{(q;q)_\infty}\text{,  and  }\sum_{\pi\in D} q^{|\pi|} = \sum_{n\geq 0} \frac{q^{n(n+1)/2}}{(q;q)_n} = (-q;q)_\infty.
\end{equation}
Notice that both generating functions' $q$-series starts with the constant term 1. This constant represents the conventional partition of 0. We consider the empty sequence with 0 parts to be the only parition of 0.

Let $C_m(n)$ be the number of partitions of $n$ into distinct parts where no part is congruent to $\pm m$ modulo $6$. Define $D_m(n)$ to be the number of partitions of $n$ into parts, not equal to $m$, where the minimal difference between consecutive parts is 2. In fact, the difference between consecutive parts is greater than or equal to $4$ unless consecutive parts are $3k$ and $3k+3$ (yielding a difference of 3), or $3k-1$ and $3k+1$ (yielding a difference of 2) for some $k\in\mathbb{Z}_{>0}$. 
   
In his thesis \cite{Capparelli_thesis}, S.~Capparelli stated two (then) conjectural identities, which we will present as the following theorem. \begin{theorem}\label{thm:fulcapparelli} For any non-negative integer $n$ and $m\in\{1,2\}$,\begin{equation}\nonumber
C_m(n) = D_m(n).
\end{equation}
\end{theorem}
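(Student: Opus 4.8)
The plan is to convert the statement into an identity of generating functions and then prove that identity. On the $C_m$ side the generating function is an infinite product, since distinct parts avoiding the residues $\pm m \pmod 6$ contribute independent factors $(1+q^k)$. Collecting residues gives, for $m=1$, the parts $k \equiv 0,2,3,4 \pmod 6$, i.e. all even $k$ together with $k \equiv 3 \pmod 6$, so
\[
\sum_{n\ge 0} C_1(n)\,q^n = (-q^2;q^2)_\infty\,(-q^3;q^6)_\infty,
\]
and for $m=2$ the parts $k \equiv 0,1,3,5 \pmod 6$, i.e. all odd $k$ together with $k \equiv 0 \pmod 6$, so
\[
\sum_{n\ge 0} C_2(n)\,q^n = (-q;q^2)_\infty\,(-q^6;q^6)_\infty.
\]
It therefore suffices to show that the generating function $F_m(q) := \sum_{\pi} q^{|\pi|}$, summed over the partitions counted by $D_m(n)$, equals the corresponding product.

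Next I would build $F_m(q)$ from the gap conditions. The exceptional small differences are tied to residues modulo $3$: a gap of $2$ occurs only across a multiple of $3$ (between $3k-1$ and $3k+1$), and a gap of $3$ occurs only between two multiples of $3$ (between $3k$ and $3k+3$), while every other consecutive pair is forced apart by at least $4$. I would introduce an auxiliary variable $z$ marking the largest part (or the number of parts) and set up a $q$-difference equation for the refined generating function $F_m(z,q)$ by classifying a partition according to its largest part and the residue of that part modulo $3$; the admissible attachments of a new largest part are dictated precisely by the two exceptional pairs above. This yields a functional equation relating $F_m(z,q)$ to $F_m$ evaluated at shifted arguments such as $zq^3$ and $zq^4$, with the forbidden value $m$ entering as a boundary condition.

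The crux is solving this functional equation and recognizing the solution as the infinite product above. I do not expect it to factor by an elementary manipulation: the interaction between the mod-$3$ pairing and the distinctness condition is exactly what makes Capparelli's identity deep. I would attack it in one of two ways. The first is Andrews' route: express the solution in terms of $q$-trinomial coefficients $\RoundTrinomial{a}{b}{c}{q}$ and invoke their summation and transformation formulas to collapse the sum into the product; this is the step I expect to be the main obstacle, since it demands the right trinomial identity rather than a routine computation. The second, better aligned with the polynomial theme of this paper, is to first prove a \emph{finite} refinement---polynomial analogues $F_m^{(N)}(q)$ and truncated products indexed by a cutoff $N$ on the largest part---by induction on $N$ using the $q$-difference equation, and then let $N\to\infty$. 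The inductive verification of the polynomial identity, i.e. checking that both sides satisfy the same recurrence with the same initial data, is again the delicate point.

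Finally, as an independent check and an alternative to the analytic argument, I would consider the method of weighted words: replace the integers by integers in three colors with an ordering and a difference matrix chosen so that the colored generating function is a transparent product, then apply the dilation $q \mapsto q^3$ together with a shift to recover the mod-$6$ distinctness and the exceptional mod-$3$ gaps. This would also supply the bijective content behind $C_m(n)=D_m(n)$, with the main difficulty now shifted to establishing the colored gap identity at the fully refined level.
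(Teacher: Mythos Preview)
Your write-up is a plan rather than a proof, and the comparison is complicated by the fact that the paper itself does not give a standalone proof of Theorem~\ref{thm:fulcapparelli}: it is quoted as background, with the $m=1$ case credited to Andrews and both cases to Tamba--Xie and Capparelli. The closest thing the paper offers to its ``own'' proof is the new polynomial refinement Theorem~\ref{thm:New_Fin_Cap}, whose $L\to\infty$ limit (combined with the quintuple product identity) recovers the analytic Capparelli identities of Theorem~\ref{thm:Capparelli_analytic}. That refinement is established precisely along your second suggested route: both sides are shown to satisfy the same second-order $q$-recurrence in the cutoff parameter, namely \eqref{eq:Rec_a_n_short} and \eqref{eq:Rec_b_n_short}, and then two initial values are checked. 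The recurrences are found and certified by computer algebra (creative telescoping via \texttt{HolonomicFunctions}, \texttt{qZeil}, \texttt{qFunctions}), not derived by hand from the gap conditions.

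The genuine gap in your proposal is that none of the content is ever supplied. You say you would set up a $q$-difference equation for $F_m(z,q)$ from the mod-$3$ gap structure and then verify a finite identity by induction, but \emph{which} polynomial identity and \emph{which} recurrence is exactly the substance of the argument, and you leave both unspecified. In the paper the truncated left-hand sides are the concrete Kanade--Russell/Kur\c{s}ung\"oz double sums cut off by $(q;q)_L/(q;q)_{L-3n-2m}$, the right-hand sides are explicit Jacobi-symbol-weighted $q$-binomial sums, and the recurrences \eqref{eq:Rec_a_n_short}--\eqref{eq:Rec_b_n_short} have coefficients that one would not guess from the partition description alone. Your first route (Andrews' $q$-trinomial method) and your third (the Alladi--Andrews--Gordon weighted-words dilation) are both historically how the theorem was actually proved, so as a survey of strategies the proposal is accurate; but you explicitly flag each hard step as ``the main obstacle'' or ``the delicate point'' and stop there, so no step is carried out.
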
 The $m=1$ case was first proven by G.~E.~Andrews \cite{Andrews_Capparelli} shortly after its debut. Two years later Lie theoretic proofs for both cases of the conjecture were supplied by Tamba and Xie \cite{Tamba} and by Capparelli \cite{Capparelli}. The Lie theoretic proofs were followed by Alladi, Andrews, and Gordon by a refinement of these identities, where they introduced restrictions on the number of occurrences of parts belonging to certain congruence classes \cite{Refinement}. In recent years some new refinements of Capparelli's identities were discovered by Dousse and Dousse joint with Lovejoy \cite{Dousse1, Dousse2}. We invite interested readers to check these resources.

After a long wait, finally in 2018, the analytic counterparts of Theorem~\ref{thm:fulcapparelli} were independently found by Kanade--Russell \cite{Kanade_Russell} and Kur\c{s}ung\"{o}z \cite{Kagan}. 

\begin{theorem}\label{thm:Capparelli_analytic}
\begin{align*}
\sum_{m,n\geq 0} \frac{q^{2m^2+6m n+6n^2}}{(q;q)_m(q^3;q^3)_n} &= (-q^2,-q^4;q^6)_\infty (-q^3;q^3)_\infty,\\
\sum_{m,n\geq 0} \frac{q^{2m^2+6m n+6n^2+m+3n}}{(q;q)_m(q^3;q^3)_n} &+\sum_{m,n\geq 0} \frac{q^{2m^2+6m n+6n^2+3m+6n+1}}{(q;q)_m(q^3;q^3)_n} = (-q,-q^5;q^6)_\infty (-q^3;q^3)_\infty.
\end{align*}
\end{theorem}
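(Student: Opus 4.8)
The plan is to realize each side of Theorem~\ref{thm:Capparelli_analytic} as the $N\to\infty$ limit of a one-parameter family of polynomial identities, since neither double sum collapses after summing a single index (the factors $q^{2m^2}$ and $q^{6n^2}$ survive any single summation, so the object is genuinely a two-index Nahm-type sum). First I would insert a Gaussian binomial coefficient into the summand, replacing $1/(q;q)_m$ (and correspondingly $1/(q^3;q^3)_n$) by a $q$-binomial $\binom{N-\cdots}{m}_q$ so that the sum becomes a polynomial in $q$, and I would exhibit a matching finite product / Gaussian-polynomial expression on the right. Because $\lim_{N\to\infty}\binom{N}{m}_q = 1/(q;q)_m$ and the finite products tend to the infinite ones, the theorem follows once the polynomial identity is established for all $N$.

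To prove the polynomial identity I would show that both sides obey the same $q$-difference equation in the truncation parameter $N$ together with the same low-order initial data. The $q$-Pascal relations $\binom{N}{m}_q = \binom{N-1}{m}_q + q^{N-m}\binom{N-1}{m-1}_q$ and its companion turn the finite sum side into a linear recurrence in $N$, while the finite product side satisfies a first-order recurrence coming from peeling off its largest factor. Matching these, after absorbing the cross term $q^{6mn}$ by a suitable index shift, reduces the claim to a finite verification at small $N$. For the second identity the two sums should first be merged: the shift $m\mapsto m+1$ (respectively $n\mapsto n+1$) carries the exponent $2m^2+6mn+6n^2+m+3n$ into $2m^2+6mn+6n^2+3m+6n+1$ plus lower-order corrections, so the pair of sums can be combined into a single refined polynomial before running the same recurrence argument.

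The main obstacle is twofold: guessing the correct finite analogue — in particular the placement of the $q$-binomial and the exact shape of the polynomial right-hand side — and then controlling the mixed-base bookkeeping, since the summand mixes base $q$ (through $(q;q)_m$) with base $q^3$ (through $(q^3;q^3)_n$). The cross term $q^{6mn}$ couples the two indices, so the recurrence in $N$ will not decouple over $m$ and $n$; handling this coupling cleanly, while keeping both the sum and the product sides in step, is where the real effort lies. Once the polynomial refinement is in hand, the analytic identities of Theorem~\ref{thm:Capparelli_analytic} drop out by letting $N\to\infty$, and the same refinement is precisely the input to which a special case of Bailey's lemma can be applied to generate the advertised infinite hierarchies.
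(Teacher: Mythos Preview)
Your high-level plan---insert $q$-binomials to truncate the double sums to polynomials, prove the polynomial identity by showing both sides satisfy a common recurrence in the truncation parameter with matching initial values, then let the parameter go to infinity---is precisely the route the paper (and its predecessors) take; see Theorems~\ref{thm:Fin_Capparelli_Round_tri}, \ref{thm:Fin_Capparelli_Binomial}, \ref{thm:New_Fin_Cap} and the recurrence proof in Section~\ref{Sec:New_Pol}. So at that level you are aligned with the paper.

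Where your proposal goes off track is the form of the finite right-hand side and, correspondingly, the limiting step. There is no finite product truncating $(-q^2,-q^4;q^6)_\infty(-q^3;q^3)_\infty$ that equals the bounded double sum, and hence no ``peel off the largest factor'' first-order recurrence to match against. The correct finite counterpart is a theta-type sum of Gaussian polynomials, for instance
\[
\sum_{j=-M}^{M} q^{3j^2+j}{2M\brack M-j}_{q^3}
\]
as in \eqref{eq:Fin_Cap1_Binomial}, and it satisfies the \emph{same} higher-order $q$-recurrence as the left side (found and certified by creative telescoping / \texttt{qZeil} rather than bare $q$-Pascal). The infinite product only appears after sending $M\to\infty$ and then applying the Jacobi triple product identity \eqref{eq:JTP}; you never mention this step, and without it the argument cannot close. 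Your suggestion to first merge the two sums in the second identity also differs from what the paper does: there the two pieces are kept separate, individual recurrences are found for each, and holonomic closure is used to combine them before comparing with the single recurrence on the right. None of this invalidates your outline, but the missing Jacobi triple product (and hence the theta-sum, not finite-product, shape of the finite right-hand side) is an essential ingredient you would need to supply.
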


Theorem~\ref{thm:Capparelli_analytic} relied on the original proofs \cite{Capparelli, Andrews_Capparelli} of the Capparelli identities. In \cite{BU_Finite_Capparelli}, we proved polynomial identities that directly imply Theorem~\ref{thm:Capparelli_analytic} and, hence, the Capparelli identities.

\begin{theorem}\label{thm:Fin_Capparelli_Round_tri}For any $L\in\mathbb{Z}_{\geq 0}$, we have
\begin{align} 
\label{eq:Fin_Cap1_round_tri}&\sum_{m,n\geq 0} q^{2m^2+6m n+6n^2}{3(L-2n-m)\brack m}_q {2(L-2n-m)+n\brack n}_{q^3}= \sum_{j=-\infty}^{\infty} q^{3j^2 +j} \RoundTrinomial{L}{2j}{2j}{q^3},\\
\nonumber &\sum_{m,n\geq 0} q^{2m^2+6m n+6n^2+m+3n}{3(L-2n-m)+2\brack m}_q {2(L-2n-m)+n+1\brack n}_{q^3}\\ 
\nonumber&\hspace{1cm}+ \sum_{m,n\geq 0} q^{2m^2+6m n+6n^2+3m+6n+1}{3(L-2n-m)\brack m}_q {2(L-2n-m)+n\brack n}_{q^3} \\
\nonumber&\hspace{7.2cm}=\sum_{j=-\infty}^{\infty} q^{3j^2 +2j}\RoundTrinomial{L+1}{2j+1}{2j+1}{q^3},
\end{align}
where
\begin{align*}
 %\label{Binom_def}
{m+n \brack m}_q &:= \left\lbrace \begin{array}{ll}\frac{(q;q)_{m+n}}{(q;q)_m(q;q)_{n}},&\text{for }m, n \geq 0,\\
   0,&\text{otherwise,}\end{array}\right.
   \intertext{are the $q$-binomial coefficients, and} 
%\label{eq:Round_tri_def}
\RoundTrinomial{L}{b}{a}{q} &:= \sum_{j=0}^L q^{j(j+b)} {L \brack j}_q{L-j\brack j+a}_q
\end{align*} are the $q$-trinomial coefficients defined by Andrews and Baxter \cite{Andrews_Baxter}.
\end{theorem}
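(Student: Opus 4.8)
The plan is to prove both identities \emph{simultaneously} by induction on $L$, showing that each side obeys the same pair of coupled $q$-recurrences. First I would record that all four expressions are genuine polynomials in $q$: on the left the summands vanish once $m$ or $n$ exceeds the relevant top entry (since ${a\brack b}_q=0$ unless $0\le b\le a$), so each double sum is finite; on the right the Andrews--Baxter coefficient $\RoundTrinomial{L}{A}{A}{q^3}$ vanishes for $|A|>L$, so each sum over $j$ is finite. Since the two upper entries of every trinomial occurring here coincide, I would view the right-hand sides as built from the standard symmetric trinomials $\RoundTrinomial{L}{A}{A}{q^3}$, for which the recurrences of Andrews and Baxter \cite{Andrews_Baxter} are available.

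The engine of the proof is one such recurrence, which expresses $\RoundTrinomial{L}{A}{A}{q^3}$ as a combination, with explicit powers of $q^3$, of $\RoundTrinomial{L-1}{A}{A}{q^3}$ and its two neighbours with lower index $A+1$ and $A-1$. Substituting this into $\sum_j q^{3j^2+j}\RoundTrinomial{L}{2j}{2j}{q^3}$ and into $\sum_j q^{3j^2+2j}\RoundTrinomial{L+1}{2j+1}{2j+1}{q^3}$ and reindexing $j$, the theta-type weights $q^{3j^2+j}$ and $q^{3j^2+2j}$ are precisely the ones that make the shifted sums recombine cleanly. The key structural point I would exploit is that the recurrence moves the lower index by $\pm1$ and hence flips its parity: the even-index sum at level $L$ is controlled by odd-index sums at level $L-1$, and conversely. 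Thus the right-hand side of the first identity (even indices, level $L$) and the right-hand side of the second (odd indices, level $L+1$) satisfy a \emph{closed coupled system}, and the goal is to show the two left-hand sides satisfy exactly the same system.

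For the product side I would set $N:=L-2n-m$ and apply the $q$-Pascal rules to the factors ${3N\brack m}_q$ and ${2N+n\brack n}_{q^3}$, then re-index $m$ and $n$ so that the decremented-$N$ binomials reassemble into the level-$(L-1)$ double sums. The entire purpose of the quadratic form $2m^2+6mn+6n^2$, together with the linear corrections $m+3n$ and $3m+6n+1$ in the second identity, is that the powers of $q$ thrown off by the two Pascal expansions should match, term by term, the powers of $q^3$ in the trinomial recurrence, and that the coupling between the two identities produced on the product side should agree with the coupling found on the trinomial side. After verifying the base cases $L=0$ and $L=1$ directly, the coupled induction then closes.

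The step I expect to be the main obstacle is exactly this matching of recurrences. Because $N=L-2n-m$ sits inside \emph{both} binomial factors, decreasing $L$ by one changes the top entries $3N$ and $2N+n$ by $3$ and $2$ respectively, so a single Pascal step does not peel off one clean copy of the level-$(L-1)$ sum; one must iterate the recurrence on the base-$q$ factor several times, producing a spray of shifted sub-sums and boundary contributions (where a binomial argument becomes negative) that have to be collected by careful re-indexing. This mismatch between the ``$\times 3$'' and ``$\times 2$'' scalings --- which is precisely what encodes the modulus-$6$ structure of Capparelli's theorem --- is where the delicate bookkeeping lies: the real content of the argument is checking that, after all shifts, the quadratic exponents and the parity-coupling on the product side reproduce the coupled recurrence satisfied by the two trinomial sums.
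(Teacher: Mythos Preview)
This theorem is not proven in the present paper at all: it is quoted in the introduction as a result from the authors' earlier work \cite{BU_Finite_Capparelli}, and no argument for it appears here. There is therefore no ``paper's own proof'' to compare your proposal against.

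That said, your strategy is in the same family as the method the paper \emph{does} use for its new polynomial identities (Theorem~\ref{thm:New_Fin_Cap}): show that both sides satisfy the same recurrence in $L$ and check initial values. The paper, however, does not attempt the hand bookkeeping you describe; it relies on computer-algebra implementations of $q$-Zeilberger and creative telescoping (\texttt{qZeil}, \texttt{HolonomicFunctions}, \texttt{qFunctions}) to produce and certify the recurrences, then reduces higher-order certified recurrences to a common second-order one by exhibiting explicit operator factorizations. Your plan of expanding ${3N\brack m}_q$ and ${2N+n\brack n}_{q^3}$ by repeated $q$-Pascal and matching against the Andrews--Baxter trinomial recurrence is more elementary and more transparent combinatorially, but, as you yourself flag, the $3$-versus-$2$ scaling in $N=L-2n-m$ means a single drop $L\to L-1$ forces several Pascal steps and spawns many cross terms; this is exactly the kind of calculation the paper's automated approach is designed to avoid. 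Your proposal is a plausible outline, with the honest caveat that the ``spray of shifted sub-sums'' you anticipate is the entire proof and has not yet been carried out.
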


Later, in \cite{BU_Elementary}, we proved different finite versions of Capparelli's theorems.

\begin{theorem}\label{thm:Fin_Capparelli_Binomial}
\begin{align}\label{eq:Fin_Cap1_Binomial}
\sum_{m,n\geq 0} \frac{q^{2m^2+6m n+6n^2}(q^3;q^3)_M}{(q;q)_m (q^3;q^3)_n(q^3;q^3)_{M-2n-m}} &= \sum_{j=-M}^M q^{3j^2+j} {2M\brack M-j}_{q^3},\\
\label{eq:Fin_Cap2_Binomial}\sum_{m,n\geq 0} \frac{q^{2m^2+6m n+6n^2+m+3n}(q^3;q^3)_M}{(q;q)_m (q^3;q^3)_n(q^3;q^3)_{M-2n-m}} &+q\sum_{m,n\geq 0} \frac{q^{2m^2+6m n+6n^2+3m+6n}(q^3;q^3)_M}{(q;q)_m (q^3;q^3)_n(q^3;q^3)_{M-2n-m}}= \sum_{j=-M-1}^M q^{3j^2+2j} {2M+1\brack M-j}_{q^3},\\
\label{eq:Fin_Sum_of_capparelli's} 
\sum_{m,n\geq 0} \frac{q^{2m^2+6mn+6n^2-2m-3n}(q^3;q^3)_M}{(q;q)_m (q^3;q^3)_n(q^3;q^3)_{M-2n-m}}&(1+q^{3M}) = \sum_{j=-M}^M q^{3j^2-2j}(1+q^{3j}) {2M\brack M-j}_{q^3}.
\end{align}
\end{theorem}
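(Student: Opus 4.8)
The plan is to prove all three identities by induction on $M$, showing that each side obeys the same linear recurrence and that both sides agree at $M=0$. First I would organize the right-hand sides into two families of finite theta sums, $R_c(M) := \sum_{j} q^{3j^2+cj}{2M\brack M-j}_{q^3}$ and $R'_c(M) := \sum_{j} q^{3j^2+cj}{2M+1\brack M-j}_{q^3}$. Then the right-hand side of \eqref{eq:Fin_Cap1_Binomial} is $R_1(M)$ and that of \eqref{eq:Fin_Cap2_Binomial} is $R'_2(M)$, while for \eqref{eq:Fin_Sum_of_capparelli's} one writes $q^{3j^2-2j}(1+q^{3j})=q^{3j^2+j}+q^{3j^2-2j}$: the first term contributes $R_1(M)$ and, after reindexing $j\mapsto-j$ and using ${2M\brack M-j}_{q^3}={2M\brack M+j}_{q^3}$, the second contributes $R_2(M)$. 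Hence the right-hand side of \eqref{eq:Fin_Sum_of_capparelli's} equals $R_1(M)+R_2(M)$, so that identity will follow from \eqref{eq:Fin_Cap1_Binomial} plus a companion evaluation of $R_2(M)$, and the whole theorem reduces to matching three Capparelli-type double sums against $R_1$, $R_2$, and $R'_2$.

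For the right-hand sides I would derive the recurrences by iterating the $q$-Pascal rule ${n\brack k}_{Q}={n-1\brack k}_{Q}+Q^{n-k}{n-1\brack k-1}_{Q}$ with $Q=q^3$ twice, passing from ${2M\brack M-j}_{q^3}$ down to ${2M-2\brack M-1-j}_{q^3}$ (and from ${2M+1\brack M-j}_{q^3}$ to ${2M-1\brack M-1-j}_{q^3}$ in the odd case). The resulting shifts in the summation index, reabsorbed against the Gaussian weight $q^{3j^2+cj}$ via $j\mapsto j\pm 1$, should collapse to a short linear recurrence in $M$ for each $R_c$ and $R'_c$; I expect these to couple members with nearby values of $c$, exactly the coupling that must be mirrored on the left.

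For the left-hand sides, the key observation is that all of the $M$-dependence sits in the single factor $(q^3;q^3)_M/(q^3;q^3)_{M-2n-m}$, whereas the base-$q$ weight $q^{2m^2}/(q;q)_m$ and the cross term $q^{6mn}=(q^3)^{2mn}$ are inert under $M\mapsto M-1$. Applying the $q$-binomial recurrence to that factor and reabsorbing the quadratic weight should lower $M$ at the cost of shifting $m$ and $n$, and after reindexing these shifts should reproduce precisely the recurrences obtained on the right. The base case $M=0$ is immediate: the constraint $M-2n-m\ge 0$ forces $m=n=0$, and the three identities reduce to $1=1$, $1+q=1+q$, and $2=2$.

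The hard part will be the left-hand recurrence, and the mixed base is the source of the difficulty. Because the constraint $M-2n-m\ge 0$ contains $2n$, lowering $M$ entangles the $n$-summation with the $m$-summation, and the base-$q$ pieces $q^{2m^2}/(q;q)_m$ must be transported through the index shifts although they carry no $q$-Pascal structure of their own. I anticipate that the honest statement is a coupled system relating the three double sums and their linear-shift variants, so the real work is to choose the index substitutions that close this system and then to verify that it literally coincides with the right-hand recurrences rather than merely resembling them. As an independent check on the exponents, the $q\mapsto 1/q$ duality alluded to in the abstract should interchange the linear shifts $+j$ and $-2j$ seen in \eqref{eq:Fin_Cap1_Binomial} and \eqref{eq:Fin_Sum_of_capparelli's}, since ${2M\brack M-j}_{q^{-3}}=q^{-3(M^2-j^2)}{2M\brack M-j}_{q^3}$, thereby tying the $R_1$ and $R_2$ evaluations together.
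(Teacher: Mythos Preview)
The paper does not actually prove Theorem~\ref{thm:Fin_Capparelli_Binomial}; it is quoted from the authors' earlier work \cite{BU_Elementary} and used here only as input for the Bailey iterations in Section~\ref{Sec:hierarchy}. So there is no in-paper proof to compare against. That said, the analogous new result, Theorem~\ref{thm:New_Fin_Cap}, \emph{is} proved in Section~\ref{Sec:New_Pol} by exactly the strategy you propose---show both sides satisfy a common linear recurrence in the outer parameter and check initial values---so your outline is in the right spirit.

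Where your proposal falls short of a proof is precisely the step you yourself flag as ``the hard part.'' For the right-hand sums $R_c(M)$ and $R'_c(M)$ the double $q$-Pascal reduction is routine and will indeed produce short recurrences. For the left-hand sides, however, your plan to ``apply the $q$-binomial recurrence to $(q^3;q^3)_M/(q^3;q^3)_{M-2n-m}$ and reabsorb'' is not an argument but a hope: the factor $q^{2m^2}/(q;q)_m$ lives in base $q$, has no $q^3$-Pascal structure, and after the shift $m\mapsto m\pm 1$ produces residual factors like $(1-q^{m})$ and $q^{\pm(4m\pm 2)}$ that do not telescope against anything in sight. You have not exhibited the coupled system you say you ``anticipate,'' nor shown that it closes. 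In the paper's own proof of the parallel Theorem~\ref{thm:New_Fin_Cap} this obstacle is handled not by hand but by Creative Telescoping (Koutschan's \texttt{HolonomicFunctions}) and related packages, which certify recurrences of order $3$--$5$ for the individual double sums and then combine them via holonomic closure; the resulting recurrences (e.g.\ \eqref{eq:rec_S1}, \eqref{eq:rec_S2}, \eqref{eq:Rec_c_L}) are not the sort one guesses by reindexing. Absent either a concrete closed system of hand-derived recurrences or an appeal to such algorithmic certificates, the left-hand step is a genuine gap.

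A smaller point: your reduction of \eqref{eq:Fin_Sum_of_capparelli's} to ``\eqref{eq:Fin_Cap1_Binomial} plus a companion evaluation of $R_2(M)$'' is only a statement about the right-hand side. The left-hand side of \eqref{eq:Fin_Sum_of_capparelli's} is a single double sum times $(1+q^{3M})$ and does not split as (left side of \eqref{eq:Fin_Cap1_Binomial}) plus something obvious, so that identity still requires its own recurrence argument rather than following formally from the other two.
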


As $M\rightarrow\infty$, the identities \eqref{eq:Fin_Cap1_Binomial} and \eqref{eq:Fin_Cap2_Binomial}, with the help of Jacobi triple product identity (see \eqref{eq:JTP}), prove Theorem~\ref{thm:Capparelli_analytic}. The third identity \eqref{eq:Fin_Sum_of_capparelli's} gives a new relation that equates a double sum to the sum of the products that appear in Theorem~\ref{thm:Capparelli_analytic}.

Furthermore, in a follow up work \cite{BU_infinite_Fam}, we found a doubly bounded identity that unifies both \eqref{eq:Fin_Cap1_round_tri} and \eqref{eq:Fin_Cap1_Binomial}. 

\begin{theorem}\label{thm:Seed_identity} For $L$ and $M$ non-negative integers, we have
\begin{equation}\label{eq:Seed_identity}
\sum_{\substack{i,m\geq 0,\\ i+m\equiv 0\text{ (mod 2)}}} q^{\frac{m^2+3i^2}{2}} {L+M-i\brack L}_{q^3}{3(L-i)\brack m}_{q}{2(L-i)+\frac{i-m}{2}\brack 2(L-i)}_{q^3}=\sum_{j=-\infty}^\infty q^{3j^2+j} \S\left( \hspace{-.2cm}\begin{array}{c}L,\, M \\ 2j,\, j\end{array}\hspace{-.2cm};q^3  \right),\end{equation}
where
\begin{equation*}
\S\left( \hspace{-.2cm}\begin{array}{c}L,\, M \\a,\, b\end{array}\hspace{-.2cm};q  \right) := \sum_{n\geq0} q^{n(n+a)} {M+L-a-2n\brack M}_q {M-a+b \brack n}_q {M+a-b \brack n+a}_q
\end{equation*} a refinement of $q$-trinomial coefficients first defined by Warnaar \cite{Warnaar_T}.
\end{theorem}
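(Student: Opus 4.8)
The plan is to fix $L$, write the left-hand side of \eqref{eq:Seed_identity} as $F(L,M)$ and the right-hand side as $G(L,M)$, and prove $F=G$ by induction on $M$. The base case $M=0$ is immediate once one is careful with the conventions: on the left the factor ${L-i\brack L}_{q^3}$ forces $i=0$ and then ${2L-m/2\brack 2L}_{q^3}$ forces $m=0$, so $F(L,0)=1$; on the right, for every $j\neq 0$ one of the inner $q^3$-binomials ${-j\brack n}_{q^3}$, ${j\brack n+2j}_{q^3}$ has a negative upper index for all $n\geq0$ and hence vanishes, so only the $j=0$, $n=0$ term survives and $G(L,0)=1$.

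For the inductive step I would use the two $q$-Pascal recurrences ${N\brack k}_q={N-1\brack k-1}_q+q^{k}{N-1\brack k}_q$ and ${N\brack k}_q=q^{N-k}{N-1\brack k-1}_q+{N-1\brack k}_q$. Applying the first (in base $q^3$) to ${L+M-i\brack L}_{q^3}$ splits $F(L,M)$ as $F(L,M-1)$ plus a ``hybrid'' sum $H$ carrying an extra factor $q^{3(M-i)}$ and the binomial ${L+M-i-1\brack L-1}_{q^3}$, while the two $M$-free factors still record $L-i$ rather than $(L-1)-i$. On the right I would expand $\S\!\left(\begin{array}{c}L,M\\2j,j\end{array};q^3\right)$ by applying $q^3$-Pascal to each of its three factors ${M+L-2j-2n\brack M}_{q^3}$, ${M-j\brack n}_{q^3}$, ${M+j\brack n+2j}_{q^3}$; this is Warnaar's recurrence for the refined trinomial specialised to $a=2j$, $b=j$, and it expresses $G(L,M)$ through $G(L,M-1)$ and displaced trinomial terms. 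Summing those displaced terms against the weight $q^{3j^2+j}$ and reindexing by $j\mapsto j\pm1$, using $3(j+1)^2+(j+1)=3j^2+j+6j+4$ and its mirror, realigns the Gaussian and should reproduce the hybrid contribution $H$.

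Closing the induction cleanly is where the real work lies, and it is the step I expect to be the main obstacle. The hybrid term $H$ is \emph{not} $F(L-1,\cdot)$: substituting $i\mapsto i+1$ turns $L-i$ into $(L-1)-i$ in the two $M$-free factors and turns ${L+M-i-1\brack L-1}_{q^3}$ into the $F(L-1,M-1)$ binomial, but it simultaneously flips the parity constraint $i+m\equiv0\pmod2$ to $i+m\equiv1$ and shifts the last binomial's index by a half-step. Consequently a single recurrence in $M$ does not self-close, and I would strengthen the induction by proving the whole one-parameter family of identities obtained from \eqref{eq:Seed_identity} by replacing the pair $(2j,j)$ with its neighbours (equivalently, by carrying an auxiliary ``odd-parity'' companion of $F$ alongside $F$ itself). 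One then checks that the $M$-recurrences couple the members of this family in exactly the way forced by Warnaar's $q^3$-Pascal expansion of $\S$, so that the enlarged system does close.

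The remaining difficulties are bookkeeping: the left-hand side mixes the two bases $q$ and $q^3$, so the reindexings must be tracked separately in each base, and the parity constraint ties the $m$-summation on the left to the $n$-summation inside $\S$ on the right. As consistency checks on the matched recurrence I would verify that $M\to\infty$ recovers the already-proven round-trinomial identity \eqref{eq:Fin_Cap1_round_tri} (fixing the normalisation and the direction of the $j$-shifts) and that the appropriate finite specialisation collapses $\S$ to the ordinary $q^3$-binomial of \eqref{eq:Fin_Cap1_Binomial}.
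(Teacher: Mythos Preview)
The paper does not contain a proof of this theorem. Theorem~\ref{thm:Seed_identity} is stated in the Introduction as a result from the authors' earlier work \cite{BU_infinite_Fam}, where it is proven; here it serves only as background motivating the new polynomial identities. There is therefore no proof in the present paper against which to compare your proposal.

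On the proposal itself: the induction-on-$M$ strategy via $q$-Pascal is the natural line of attack, and your base case $M=0$ is correctly verified. However, what you have written is a plan rather than a proof. The decisive step---closing the recurrence---is left open: you have not written down the explicit ``odd-parity companion'' identity you say you would need, nor checked that Warnaar's recurrence for $\S$ with $(a,b)=(2j,j)$ actually produces terms matching your hybrid $H$ after the reindexing $j\mapsto j\pm1$. Whether a single companion suffices, or whether one needs a larger coupled system (possibly also shifting $L$), is exactly the content of the proof, and until that system is exhibited and shown to close, the argument has a genuine gap. Your consistency checks against \eqref{eq:Fin_Cap1_round_tri} and \eqref{eq:Fin_Cap1_Binomial} are sensible but do not substitute for carrying out the induction.
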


As $M\rightarrow \infty$, \eqref{eq:Seed_identity} reduces to \eqref{eq:Fin_Cap1_round_tri} and  as $L\rightarrow \infty$, \eqref{eq:Seed_identity} reduces to \eqref{eq:Fin_Cap1_Binomial}.

On top of it all, this unifying identity and an analogue of Bailey's Lemma helped us identify two infinite hierarchies of polynomial identities that extends \eqref{eq:Seed_identity}. One of such infinite hierarchies is the following.

\begin{theorem}\label{thm:S_hierarchy}
Let $\nu$ be a positive integer, and let $N_k = n_k+n_{k+1}+\dots +n_{\nu}$, for $k=1,2,\dots, \nu$. Then,
\begin{align*}
\nonumber\sum_{\substack{i,m,n_1,n_2,\dots,n_\nu\geq 0,\\ i+m\equiv N_1+N_2+\dots +N_\nu\text{ (mod 2)}}} &q^{\frac{m^2+3(i^2+N_1^2+N_2^2\dots+N_\nu^2)}{2}}{L+M-i\brack L}_{q^3}{L-N_1 \brack i}_{q^3} {3n_\nu \brack m}_{q}\\\
&\hspace{2cm}\times{2n_\nu + \frac{i-m-N_1-N_2-\dots-N_\nu}{2}\brack 2n_\nu}_{q^3} \prod_{j=1}^{\nu-1}{i-\sum_{l=1}^{j} N_l+n_j \brack n_j}_{q^3}\\\nonumber
&\hspace{-1cm} = \sum_{j=-\infty}^\infty q^{3{\nu+2 \choose 2}j^2+j} \S\left( \hspace{-.2cm}\begin{array}{c}L,\, M \\ (\nu+2)j,\, (\nu+1)j\end{array}\hspace{-.2cm};q^3  \right).
\end{align*}
\end{theorem}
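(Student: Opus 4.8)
The plan is to proceed by induction on $\nu$, using Theorem~\ref{thm:Seed_identity} as the base case and an $\S$-analogue of Bailey's lemma (the one established in \cite{BU_infinite_Fam}) as the engine that climbs from level $\nu$ to level $\nu+1$. Comparing the right-hand sides across consecutive levels isolates precisely what that engine must do: writing $\S(L,M;a,b;q)$ for the refined trinomial of Theorem~\ref{thm:Seed_identity}, it must send
\[
\sum_{j=-\infty}^\infty q^{3\binom{\nu+2}{2}j^2+j}\,\S(L,M;(\nu+2)j,(\nu+1)j;q^3)\ \longmapsto\ \sum_{j=-\infty}^\infty q^{3\binom{\nu+3}{2}j^2+j}\,\S(L,M;(\nu+3)j,(\nu+2)j;q^3),
\]
that is, it must append $j$ to each of the two lower arguments of $\S$, leave the linear term $+j$ untouched, and add $3\binom{\nu+3}{2}-3\binom{\nu+2}{2}=3(\nu+2)$ to the coefficient of $j^2$.

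Reading the left-hand sides the same way shows the corresponding move there: one inserts a new innermost summation variable $n_{\nu+1}$, multiplies by a Gaussian factor $q^{3N_{\nu+1}^2/2}$ and a single $q^3$-binomial, re-indexes every $N_k$ by $N_k\mapsto N_k+n_{\nu+1}$, and \emph{demotes} the former top variable $n_\nu$ into the product as its new $j=\nu$ factor ${i-\sum_{l=1}^\nu N_l+n_\nu\brack n_\nu}_{q^3}$, while $n_{\nu+1}$ takes over the role of the trailing factors ${3n_{\nu+1}\brack m}_q$ and ${2n_{\nu+1}+\frac{i-m-N_1-\dots-N_{\nu+1}}{2}\brack 2n_{\nu+1}}_{q^3}$. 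The first application (from the seed, $\nu=0$, to $\nu=1$) is the only slightly atypical one: there the demoted variable is $L-i$, which is absorbed into the newly created factor ${L-N_1\brack i}_{q^3}$, after which all subsequent steps are uniform. My first task, therefore, is to package this as a clean stand-alone lemma — an operator $\mathcal{B}$ acting on identities of the shape ``(finite multiple sum) $=\sum_j q^{Aj^2+j}\S(L,M;cj,dj;q^3)$'' with the stated effect $A\mapsto A+3c$, $c\mapsto c+1$, $d\mapsto d+1$ — and to verify that the seed identity is exactly its $A=3$, $c=2$, $d=1$ input.

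The analytic heart of $\mathcal{B}$ is a transformation formula for $\S$ itself: convolving $\S(L,M;cj,dj;q^3)$ against one $q^3$-binomial together with the Gaussian $q^{3cj^2}$ should collapse, via a $q$-Chu--Vandermonde / $q$-binomial evaluation, to $\S(L,M;(c+1)j,(d+1)j;q^3)$ up to the recorded prefactor. This is the trinomial counterpart of the elementary Bailey step $\alpha_n\mapsto a^nq^{n^2}\alpha_n$, $\beta_n\mapsto\sum_j a^jq^{j^2}\beta_j/(q;q)_{n-j}$, with the dilation compounding through the growing first argument $cj=(\nu+2)j$ of $\S$ — which is what produces the quadratic-in-$\nu$ coefficient $3\binom{\nu+2}{2}$ rather than a linear one. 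I would prove this transformation directly from the definition of $\S$, and then close the induction by applying $\mathcal{B}$ exactly $\nu$ times to the seed, checking at each step that the half-integer exponents force the relevant lower binomial entries to be nonnegative integers, so that the parity constraint propagates as $i+m\equiv N_1+\dots+N_k\mapsto i+m\equiv N_1+\dots+N_{k+1}\pmod 2$.

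The main obstacle will be pinning down the exact transformation formula underlying $\mathcal{B}$ — in particular identifying which single $q^3$-binomial convolution sends the parameter pair $(cj,dj)$ to $((c+1)j,(d+1)j)$ while producing precisely the prefactor $q^{3cj^2}$ and no spurious residual terms — since the refined trinomial $\S$ has three internal $q^3$-binomials coupled through one summation index, and reorganizing them after the convolution is delicate. A secondary, more mechanical, difficulty is the telescoping bookkeeping: one must check that the repeated re-indexings $N_k\mapsto N_k+n_{\nu+1}$ collapse the iterated left-hand side into the single clean nested product $\prod_{j=1}^{\nu-1}{i-\sum_{l=1}^jN_l+n_j\brack n_j}_{q^3}$ with the stated exponent $\tfrac12\bigl(m^2+3(i^2+N_1^2+\dots+N_\nu^2)\bigr)$, rather than an unsummed or misaligned expression, and that all the arguments $\frac{i-m-N_1-\dots-N_\nu}{2}$ remain integral throughout the chain.
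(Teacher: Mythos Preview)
The paper does not actually prove Theorem~\ref{thm:S_hierarchy}; it is quoted from \cite{BU_infinite_Fam}, with only the remark that the seed identity (Theorem~\ref{thm:Seed_identity}) combined with ``an analogue of Bailey's Lemma'' produces the hierarchy. Your plan --- induction on $\nu$ from the seed via an $\S$-refined Bailey step that increments both lower parameters of $\S$ by $j$ and raises the quadratic coefficient by $3(\nu+2)$, with the atypical first step absorbing $L-i$ into the new factor ${L-N_1\brack i}_{q^3}$ --- is precisely that approach, and your parameter bookkeeping and parity propagation are correct.
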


Tending $L\rightarrow\infty$ and $M\rightarrow\infty$ in Theorem~\ref{thm:S_hierarchy} yields the following infinite hierarchy of sum-product identities \cite[Theorem~15]{BU_infinite_Fam}. 

\begin{theorem}\label{thm:End_of_s_hierarchy}  Let $\nu$ be a positive integer, and let $N_k = n_k+n_{k+1}+\dots +n_{\nu}$, for $k=1,2,\dots, \nu$. Then,
\begin{align*}
\nonumber\sum_{\substack{i,m,n_1,n_2,\dots,n_\nu\geq 0,\\ i+m \equiv N_1+N_2+\dots + N_\nu \text{ (mod 2)}}} &\frac{q^{\frac{m^2+3(i^2+N_1^2+N_2^2+\dots + N_\nu^2)}{2}}}{(q^3;q^3)_i} {3n_\nu\brack m}_{q} {2n_\nu + \frac{i-N_1-N_2-\dots-N_\nu-m}{2}\brack 2n_\nu}_{q^3} \prod_{j=1}^{\nu-1} {i- \sum_{k=1}^j N_k+n_j \brack n_j}_{q^3}\\
 &\hspace{3cm}= \frac{(q^{6{\nu+2\choose 2}},-q^{3{\nu+2\choose 2}+1},-q^{3{\nu+2\choose 2}-1};q^{6{\nu+2\choose 2}})_\infty}{(q^{3};q^{3})_\infty}.
\end{align*}
\end{theorem}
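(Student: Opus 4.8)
The plan is to derive Theorem~\ref{thm:End_of_s_hierarchy} directly from Theorem~\ref{thm:S_hierarchy} by letting both $L\to\infty$ and $M\to\infty$, carried out coefficient-wise as an identity of formal power series in $q$. First I would isolate the dependence on $L$ and $M$ on each side. On the left-hand side, the only $L,M$-dependent factors are ${L+M-i\brack L}_{q^3}$ and ${L-N_1\brack i}_{q^3}$; letting $M\to\infty$ sends the first to $1/(q^3;q^3)_L$, after which letting $L\to\infty$ gives
\[
\lim_{L,M\to\infty}{L+M-i\brack L}_{q^3}{L-N_1\brack i}_{q^3}=\frac{1}{(q^3;q^3)_\infty\,(q^3;q^3)_i},
\]
since the surviving ratios of $q$-Pochhammer symbols tend to $1$. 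All remaining factors depend only on $i,m,n_1,\dots,n_\nu$, so the left-hand side of Theorem~\ref{thm:S_hierarchy} tends to $1/(q^3;q^3)_\infty$ times the left-hand side of Theorem~\ref{thm:End_of_s_hierarchy}.

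Next I would evaluate the limit of the right-hand side. The essential step is to show that, for every integer $j$,
\[
\lim_{L,M\to\infty}\S\left(\hspace{-.2cm}\begin{array}{c}L,\, M \\ (\nu+2)j,\, (\nu+1)j\end{array}\hspace{-.2cm};q^3\right)=\frac{1}{(q^3;q^3)_\infty^2},
\]
\emph{independently of $j$}. Writing $a=(\nu+2)j$ (so that $-a+b=-j$ and $a-b=j$ inside $\S$), letting $M\to\infty$ collapses the two $q$-binomials to $1/(q^3;q^3)_n$ and $1/(q^3;q^3)_{n+a}$, while ${M+L-a-2n\brack M}_{q^3}$ becomes $1/(q^3;q^3)_{L-a-2n}$, which tends to $1/(q^3;q^3)_\infty$ as $L\to\infty$. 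The remaining sum $\sum_{n\geq 0}q^{3n(n+a)}/\big((q^3;q^3)_n(q^3;q^3)_{n+a}\big)$ is a Durfee-rectangle sum and equals $1/(q^3;q^3)_\infty$ for every integer $a$ (the case $a<0$ following from the $a\mapsto -a$ symmetry after the shift $n\mapsto n-a$), which yields the claimed $j$-independent constant.

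With the limit of $\S$ in hand, the right-hand side of Theorem~\ref{thm:S_hierarchy} tends to
\[
\frac{1}{(q^3;q^3)_\infty^2}\sum_{j=-\infty}^\infty q^{3\binom{\nu+2}{2}j^2+j}.
\]
Setting $\kappa=\binom{\nu+2}{2}$ and applying the Jacobi triple product identity~\eqref{eq:JTP} with base $q^{6\kappa}$ and $z=q$ evaluates the theta sum as $(q^{6\kappa},-q^{3\kappa+1},-q^{3\kappa-1};q^{6\kappa})_\infty$. Equating the two limits and cancelling one factor of $(q^3;q^3)_\infty$ from each side then produces exactly the identity of Theorem~\ref{thm:End_of_s_hierarchy}.

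The main obstacle is twofold. Analytically, I must justify interchanging the double limit with the (infinite) summations: on the left this is routine because, for each fixed pair $(i,m)$, the inner $q$-binomials force $n_1,\dots,n_\nu$ to range over a finite set and the $q$-power guarantees that each coefficient of $q^N$ stabilizes once $L,M$ are large; on the right the bilateral sum over $j$ is handled the same way. Conceptually, the crux is the clean evaluation $\lim_{L,M\to\infty}\S=1/(q^3;q^3)_\infty^2$ and, in particular, its independence of $j$, which is what lets the theta sum factor out so that the Jacobi triple product delivers the product side. Keeping careful track of the single surviving $1/(q^3;q^3)_\infty$ on each side—so that exactly one copy cancels—is the bookkeeping one must not get wrong.
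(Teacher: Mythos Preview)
Your proposal is correct and is exactly the approach the paper indicates: the paper simply states that Theorem~\ref{thm:End_of_s_hierarchy} follows by letting $L\to\infty$ and $M\to\infty$ in Theorem~\ref{thm:S_hierarchy} (citing \cite[Theorem~15]{BU_infinite_Fam}) and gives no further details, so your explicit computation of the limits of the $q$-binomials, the evaluation of $\lim_{L,M\to\infty}\S=1/(q^3;q^3)_\infty^2$ via the Durfee-rectangle identity, and the application of the Jacobi triple product~\eqref{eq:JTP} fill in precisely what the paper leaves implicit.
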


In this paper we prove a new set of polynomial identities that imply Capparelli’s theorems.

\begin{theorem}\label{thm:New_Fin_Cap} Let $L\in\mathbb{Z}_{\geq 0}$, then
\begin{align}
\label{eq:fin_Cap1}\sum_{m,n\geq 0} \frac{q^{2m^2+  6 m n + 6n^2} (q;q)_L}{(q;q)_{L-3n-2m}(q;q)_m(q^3;q^3)_n} &= \sum_{j=-L}^L \Jacobi{j+1}{3} q^{j^2} {2L\brack L-j}_q,\\
\label{eq:fin_Cap2}\sum_{m,n\geq 0} \frac{q^{2m^2+6mn+6n^2 + m+ 3n} (q;q)_L}{(q;q)_{L-3n-2m}(q;q)_m(q^3;q^3)_n} &+ q\sum_{m,n\geq 0} \frac{q^{2m^2+6mn+6n^2 + 3m+ 6n}(q;q)_L}{(q;q)_{L-3n-2m-1}(q;q)_m(q^3;q^3)_n} = \sum_{j=-L}^L \Jacobi{j+1}{3} q^{j(j+1)} {2L\brack L-j}_q,
\end{align} where $\Jacobi{\cdot}{\cdot}$ is the Jacobi symbol.
\end{theorem}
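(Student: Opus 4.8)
The plan is to prove each of the two polynomial identities by showing that its two sides satisfy a common $q$-difference equation in $L$ and agree for small $L$. For finite $L$ both sides are genuine polynomials and neither factors (the right-hand sides become products only after $L\to\infty$ via the Jacobi triple product), so a recurrence-plus-initial-data argument, in the style of the Schur-polynomial treatment of Rogers--Ramanujan finitizations, is the natural tool.

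First I would analyze the right-hand sides. Writing $R_1(L) = \sum_{j} \Jacobi{j+1}{3} q^{j^2}{2L\brack L-j}_q$, I would apply the two $q$-Pascal rules ${n\brack k}_q = {n-1\brack k-1}_q + q^k{n-1\brack k}_q = q^{n-k}{n-1\brack k-1}_q + {n-1\brack k}_q$ twice to reduce the upper index from $2L$ to $2L-2$, reindex $j$, and invoke the $3$-periodicity of the weight $\Jacobi{j+1}{3}$, which vanishes for $j\equiv 2$ and changes sign between $j\equiv 0$ and $j\equiv 1 \pmod 3$. I expect the cross terms to collapse and to obtain a short recurrence for $R_1(L)$ in terms of $R_1(L-1)$ and $R_1(L-2)$ with explicit $q$-power coefficients. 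The identical computation, with the exponent $j^2$ replaced by $j(j+1)$, governs the right-hand side $R_2(L)$ of \eqref{eq:fin_Cap2}.

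Next I would derive the matching recurrence for the left-hand sides. Using $(q;q)_L = (1-q^L)(q;q)_{L-1}$ I would split each double sum according to how the truncation factor $(q;q)_{L-3n-2m}$ contributes, peel off the boundary terms, and reindex $(m,n)$ so that the truncations $L-3n-2m$ decrease by one. The quadratic form $2m^2+6mn+6n^2$ is engineered so that completing the square under these shifts reproduces exactly the $q$-powers found on the right and forces the base-$q$ factor $(q;q)_m$ and the base-$q^3$ factor $(q^3;q^3)_n$ to cooperate. For the second identity the auxiliary sum with denominator $(q;q)_{L-3n-2m-1}$ and prefactor $q$ should be precisely the term that absorbs the boundary contributions, so that the combined left-hand side of \eqref{eq:fin_Cap2} satisfies the same recurrence as $R_2(L)$.

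The main obstacle is this last bookkeeping: confirming that the reindexed double sum recombines into the claimed recurrence requires tracking carefully how all three Pochhammer factors transform under the shifts and checking that the linear parts $m+3n$ and $3m+6n$ align the exponents correctly. As cross-checks I would verify the base cases directly, e.g.\ $R_1(0)=1$, $R_1(1)=1$, $R_1(2)=1+q^2-q^4$, each matching the left side. Should the direct recurrence prove unwieldy, I would fall back on re-expressing $R_1(L)$ through the round $q$-trinomial coefficients $\RoundTrinomial{L}{2j}{2j}{q^3}$ of Theorem~\ref{thm:Fin_Capparelli_Round_tri} by means of the Andrews--Baxter relations among $q$-trinomials, reducing the right-hand side to the already-established finite Capparelli form and leaving only a matching of the two left-hand double sums, again handled by the same recurrence technique.
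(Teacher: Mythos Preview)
Your overall strategy---prove that both sides of each identity satisfy a common second-order $q$-recurrence in $L$ and then verify initial values---is exactly the route the paper takes. However, several of your expectations about how the details will unfold do not match what actually happens, and the paper's proof relies on computer algebra in an essential way rather than on the by-hand Pascal manipulations you outline.

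Specifically: applying the $q$-Pascal recurrences directly to $R_1(L)$ and $R_2(L)$ in the form you wrote does \emph{not} produce a second-order recurrence. The paper reports that the automated tools, fed the representations $\sum_k(q^{(3k)^2}{2L\brack L+3k}_q - q^{(3k+1)^2}{2L\brack L+3k+1}_q)$ etc., return recurrences of order $4$ and $6$ respectively; only after passing to an alternative single-sum form with a rational prefactor $(1-q^{6j+1})/(1-q^{L+3j+1})$ do the algorithms yield order $2$ for $R_1$, and even then $R_2$ only yields order $4$ automatically---the second-order recurrence \eqref{eq:Rec_b_n_short} is first \emph{guessed} with \texttt{qFunctions} and then shown to divide the proven fourth-order one.

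On the left side of \eqref{eq:fin_Cap2} the situation is worse than you anticipate: the two double sums $S_{1,L}$ and $S_{2,L}$ do \emph{not} satisfy the same recurrence (they obey distinct third-order recurrences \eqref{eq:rec_S1}, \eqref{eq:rec_S2}), so the second sum is not simply ``absorbing boundary contributions'' of the first. The paper uses holonomic closure to obtain a fifth-order recurrence \eqref{eq:Rec_c_L} for the sum $S_{1,L}+S_{2,L}$ and then checks that \eqref{eq:Rec_b_n_short} divides it. Your fallback via Andrews--Baxter $q$-trinomials is also not straightforward: Theorem~\ref{thm:Fin_Capparelli_Round_tri} lives over base $q^3$, whereas the present right-hand sides are $q$-binomials in base $q$, and no direct bridge between the two is established in the paper.
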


The theorem above is analogous to Theorem~\ref{thm:Fin_Capparelli_Binomial}.

Then, by appeal to a special case of Bailey's lemma, we prove various infinite  polynomial identities which tend to infinite hierarchies of sum-product identities asymptotically. One of such sum-product hierarchies is below.

\begin{theorem}\label{thm:hierarchy_Cap1_alternate} Let $f\in\mathbb{N}$, $s\in \{0,1,2,\dots,f\}$ and $N_i := n_i+n_{i+1}+\dots+n_f$ with $i=1,2\dots, f$, where $N_{f+1}:=0$, then
\begin{align}
\label{eq:hierarchy_Cap1_alternate}
\sum_{m,n,n_1,n_2,\dots,n_f\geq 0 } &\frac{q^{2m^2+6mn+6n^2 +N_1^2 + N_2^2+\dots+N_f^2+N_{f-s+1} +\dots + N_f} (q;q)_{n_f}}{(q;q)_m (q^3;q^3)_n (q;q)_{n_f-3n-2m }(q;q)_{n_1}(q;q)_{n_2}\dots(q;q)_{n_{f-1}}(q;q)_{2n_f}}\\ \nonumber
&\hspace{3cm} = \frac{(q^{f+1-s},q^{5f+5+s},q^{6(f+1)};q^{6(f+1)})_\infty(q^{4f+4+2s},q^{8f+8-2s};q^{12(f+1)})_\infty}{(q;q)_\infty}.
\end{align}
\end{theorem}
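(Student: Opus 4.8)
The plan is to read the finite identity \eqref{eq:fin_Cap1} as a Bailey pair relative to $a=1$ and then to pump it through an iterated special case of Bailey's lemma. Dividing \eqref{eq:fin_Cap1} by $(q;q)_{2L}$ and using ${2L\brack L-j}_q=(q;q)_{2L}/\big((q;q)_{L-j}(q;q)_{L+j}\big)$, the identity takes the bilateral shape $\beta_L=\sum_{j=-L}^{L}\alpha_j/\big((q;q)_{L-j}(q;q)_{L+j}\big)$ with
\[
\beta_L=\sum_{m,n\geq0}\frac{q^{2m^2+6mn+6n^2}(q;q)_L}{(q;q)_{2L}(q;q)_{L-3n-2m}(q;q)_m(q^3;q^3)_n},\qquad \alpha_j=\Jacobi{j+1}{3}q^{j^2}.
\]
So $(\alpha_j,\beta_L)$ is a Bailey pair relative to $a=1$, and this $\beta_L$ is exactly the seed block that appears, with $L$ replaced by $n_f$, inside the left-hand side of \eqref{eq:hierarchy_Cap1_alternate}.

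First I would iterate Bailey's lemma $f$ times on this seed, reaching a final Bailey level $L$. Setting $N_i=n_i+\dots+n_f$ turns the ordered chain variables $N_1\geq N_2\geq\dots\geq N_f=n_f\geq 0$ into the independent indices $n_1,\dots,n_f$, with $(q;q)_{n_i}=(q;q)_{N_i-N_{i+1}}$ supplying the chain denominators and $q^{N_i^2}$ the chain weights; this reproduces the factors $q^{N_1^2+\dots+N_f^2}$, the product $(q;q)_{n_1}\cdots(q;q)_{n_{f-1}}$, and the seed block $\beta_{n_f}$ on the left of \eqref{eq:hierarchy_Cap1_alternate}. The parameter $s$ would be produced by performing the innermost $s$ of these iterations with a base-changing specialization rather than the plain $\rho_1,\rho_2\to\infty$ one: each such step deposits an extra factor $q^{N_i}$ for $i=f,f-1,\dots,f-s+1$, which assembles into the linear term $N_{f-s+1}+\dots+N_f$ in the exponent. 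On the $\alpha$ side the same $s$ insertions multiply the bilateral weight by $q^{-j}$ apiece, so that after the full chain the right-hand summand is $\Jacobi{j+1}{3}q^{(f+1)j^2-sj}$, the $q^{j^2}$ of $\alpha_j$ accounting for the extra $+1$ in $f+1$.

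Next I would apply the defining Bailey relation to the $f$-fold iterate and let $L\to\infty$. Since $\lim_{L\to\infty}(q;q)_{L\pm r}=(q;q)_\infty$, the right-hand side collapses to $\frac{1}{(q;q)_\infty}\sum_{j=-\infty}^{\infty}\Jacobi{j+1}{3}q^{(f+1)j^2-sj}$. Finally I would evaluate this theta-like sum with the quintuple product identity specialized to base $Q=q^{6(f+1)}$ and $z=q^{s-(f+1)}$: the two $z$-families $z^{3n}$ and $z^{-3n-1}$ reproduce exactly the residues $j\equiv0$ and $j\equiv2\pmod 3$ carried by $\Jacobi{1-j}{3}$ (equivalently $\Jacobi{j+1}{3}$ after $j\mapsto-j$), and the five resulting product factors are precisely $(q^{6(f+1)};q^{6(f+1)})_\infty$, $(q^{5f+5+s};q^{6(f+1)})_\infty$, $(q^{f+1-s};q^{6(f+1)})_\infty$, $(q^{4f+4+2s};q^{12(f+1)})_\infty$ and $(q^{8f+8-2s};q^{12(f+1)})_\infty$, yielding the claimed product after division by $(q;q)_\infty$.

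The main obstacle is the second step: correctly realizing the parameter $s$. The plain Bailey chain alone yields only the $s=0$ case, and one checks directly that naively inserting a symmetric weight $q^{N_i^2+N_i}$ would spoil the match, because the seed $\alpha_j=\Jacobi{j+1}{3}q^{j^2}$ is not invariant under $j\mapsto-j$; the correct $s$-dependence demands an asymmetric insertion whose $\beta$-action carries $q^{+N_i}$ while its $\alpha$-action carries $q^{-j}$ (a sign flip that is the signature of a base-changing, Bailey-lattice-type step). The delicate point is therefore to pin down this specialization and verify that it produces the clean exponent $(f+1)j^2-sj$ on the right and the term $N_{f-s+1}+\dots+N_f$ on the left simultaneously. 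Once those signs are fixed, the remaining bookkeeping—the reparametrization $N_i=n_i+\dots+n_f$ and the quintuple product matching—is routine.
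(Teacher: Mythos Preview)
Your global architecture is right: reading \eqref{eq:fin_Cap1} as a Bailey pair relative to $a=1$, iterating along a chain, and finishing with the quintuple product is exactly how the paper proceeds, and your identification of the innermost $s$ iterations as the ones that must carry the extra linear factor is correct. The gap is precisely where you locate it, but it is more serious than a matter of ``pinning down a specialization'': there is no choice of $\rho_1,\rho_2$ in the general Bailey lemma, and no standard Bailey-lattice move, that deposits $q^{+r}$ on the $\beta$ side while simultaneously depositing $q^{-j}$ on the $\alpha$ side. In the ordinary Bailey chain and lattice the two actions are coupled (both receive $q^{r^2+ar}$ with the same sign on the linear term, or both shift through the same change of $a$), so the asymmetry you are asking for cannot come from a generic tool.

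The paper resolves this by a device that lies outside the Bailey machinery and depends on the specific Jacobi-symbol weight $\alpha_j=\Jacobi{j+1}{3}q^{j^2}$. At each of the first $s$ stages one simply multiplies the current finite identity by $q^L$ (this factor, after the variable relabeling, is what becomes $q^{N_i}$), and then invokes the separate transformation
\[
\sum_{j=-L}^{L}\Jacobi{j+1}{3}q^{kj(j-1)}{2L\brack L-j}_q
\;=\; q^{L}\sum_{j=-L}^{L}\Jacobi{j+1}{3}q^{kj^2-(k-1)j}{2L\brack L-j}_q
\]
(Theorem~\ref{thm:k_transform}) to rewrite the right-hand side so that a further plain $a=0$ Bailey step applies. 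This identity is proved by an elementary vanishing argument that hinges on the sign pattern of $\Jacobi{j+1}{3}$; it is exactly the asymmetric ingredient you were looking for and it is \emph{not} a specialization of Bailey's lemma. After $s$ such augmented steps and $f-s$ plain $a=0$ Bailey steps one lands on the finite identity \eqref{eq:double_fin_hierarchy} with right side $\sum_j\Jacobi{j+1}{3}q^{(f+1)j^2-sj}{2L\brack L-j}_q$, and the limit $L\to\infty$ together with \eqref{eq:quintuple} yields the theorem (the $s=0$ case being Theorem~\ref{thm:hierarchy_Cap1}). Your outline becomes a complete proof once the hoped-for ``base-changing lattice step'' is replaced by the $q^L$-multiplication combined with Theorem~\ref{thm:k_transform}.
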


The organization of this paper is as follows. In Section~\ref{Sec:Background}, we give useful formulas that will aid in the proofs. Section~\ref{Sec:New_Pol} has the proof of Theorem~\ref{thm:New_Fin_Cap} and other similar formulas that will lead to the discovery of infinite hierarchies. These infinite hierarchies and the proof of Theorem~\ref{thm:hierarchy_Cap1_alternate} are given in Section~\ref{Sec:hierarchy}. In Section~\ref{Sec:dual}, we look at the dual identities to the ones in Theorem~\ref{thm:New_Fin_Cap} and some related partition theoretic consequences. The last Section has some short concluding remarks.

\section{Some Useful Formulas}\label{Sec:Background}

For completeness, in this section we would also like to present some essential ingredients of our proofs. We start with two well known limits of the $q$-binomial coefficients. For any $j\in \mathbb{Z}_{\geq0}$ and $a =0$ or 1,
\begin{align}
\nonumber%\label{eq:Binom_limit}
 \displaystyle \lim_{L\rightarrow\infty}{L\brack j}_q &= \frac{1}{(q;q)_m},\\
  \nonumber%\label{eq:Binom_limit2} 
  \displaystyle \lim_{L\rightarrow\infty}{2L+a\brack L-j}_q &= \frac{1}{(q;q)_\infty}, \intertext{and for $n,\ m\in\mathbb{Z}_{\geq0}$}
\label{eq:qBinom_1_over_q}{n+m\brack m}_{q^{-1}} &= q^{-mn} {n+m\brack m}_{q}.\intertext{We would also like to recall the $q$-binomial recurrences \cite[I.45, p.353]{Gasper_Rahman}:}
 \label{eq:Binom_rec} {m+n \brack m}_q &= {m+n-1 \brack m}_q + q^{n} {m+n-1 \brack m-1}_q.\intertext{It is easy to verify that }
 \label{eq:Binom_Shift}  (1-q^j) {L\brack j}_q &= (1-q^L) {L-1\brack j-1}_q.
\end{align}
The \textit{$q$-binomial theorem} \cite[II.3, p.354]{Gasper_Rahman} states that \begin{equation}
\label{eq:qBin_thm} \sum_{n\geq 0 } \frac{(a;q)_n}{(q;q)_n} z^n = \frac{(az;q)_\infty}{(z;q)_\infty}.
\end{equation}
For $z\not = 0$, the \textit{Jacobi triple product identity} \cite[(1.6.1),p.15]{Gasper_Rahman} is 
\begin{equation}\label{eq:JTP}
\sum_{j=-\infty}^{\infty} q^{j^2} z^{j} = (-zq,-q/z,q^2;q^2)_\infty .
\end{equation}
The \textit{quintuple product identity} \cite[ex.5.6, p.147]{Gasper_Rahman} for $z\not= 0$ is given as  \begin{equation}\label{eq:quintuple}
\sum_{j=-\infty}^{\infty} (-1)^j q^{j(3j-1)/2} z^{3j}(1+zq^j) = (q,-z,-q/z;q)_\infty (qz^2,q/z^2;q^2)_\infty.
\end{equation}

We now present a special case of Bailey's Lemma \cite{GEA_multi, Peter_Thesis} that will be instrumental to some of our proofs.

\begin{theorem}\label{thm:Bailey} For $a=0,1$, if
\[F_a(L,q) = \sum_{j=-\infty}^\infty \alpha_j(q) {2L+a\brack L-j}_q\] then
\[\sum_{r\geq 0 } \frac{q^{r^2+ar} (q;q)_{2L+a}}{(q;q)_{L-r}(q;q)_{2r+a}} F_a(r,q) = \sum_{j=-\infty}^\infty \alpha_j(q) q^{j^2+aj} {2L+a\brack L-j}_q.\]
\end{theorem}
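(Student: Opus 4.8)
The plan is to treat this as the standard finite Bailey--lemma computation: substitute the definition of $F_a$, interchange the two finite summations, and reduce everything to a single kernel identity in which the free coefficients $\alpha_j(q)$ play no role. Writing $F_a(r,q)=\sum_j \alpha_j(q){2r+a\brack r-j}_q$ and inserting this into the left-hand side, the double sum is finite (the factor $1/(q;q)_{L-r}$ forces $0\le r\le L$, and ${2r+a\brack r-j}_q$ vanishes outside a bounded range of $j$), so I may freely interchange the sums over $r$ and $j$. Comparing with the right-hand side, it then suffices to prove, for every fixed integer $j$, the kernel identity
\begin{equation*}
\sum_{r\ge 0}\frac{q^{r^2+ar}(q;q)_{2L+a}}{(q;q)_{L-r}(q;q)_{2r+a}}{2r+a\brack r-j}_q = q^{j^2+aj}{2L+a\brack L-j}_q .
\end{equation*}

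Next I would simplify the kernel. Expanding ${2r+a\brack r-j}_q=(q;q)_{2r+a}/\big((q;q)_{r-j}(q;q)_{r+j+a}\big)$ cancels the $(q;q)_{2r+a}$ in the denominator, and after dividing out the common factor $(q;q)_{2L+a}$ the claim becomes $\sum_r q^{r^2+ar}/\big((q;q)_{L-r}(q;q)_{r-j}(q;q)_{r+j+a}\big)=q^{j^2+aj}/\big((q;q)_{L-j}(q;q)_{L+j+a}\big)$. Since both the summand and the right-hand side are invariant under $j\mapsto -j-a$ (using ${N\brack k}_q={N\brack N-k}_q$ and $(-j-a)^2+a(-j-a)=j^2+aj$), and every integer $j$ satisfies $j\ge 0$ or $-j-a\ge 0$, it is enough to treat $j\ge 0$. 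For such $j$ the terms with $r<j$ vanish, so I set $r=j+s$ with $s\ge 0$, factor out $q^{j^2+aj}$, and abbreviate $A=L-j\ge 0$ (the case $j>L$ being trivial, as both sides then vanish) and $B=2j+a\ge 0$. A short rearrangement using ${A\brack s}_q=(q;q)_A/\big((q;q)_s(q;q)_{A-s}\big)$ and $(q;q)_{B+s}=(q;q)_B(q^{B+1};q)_s$ reduces the whole statement to the single summation
\begin{equation*}
S(A,B):=\sum_{s\ge 0}\frac{q^{s^2+Bs}}{(q^{B+1};q)_s}{A\brack s}_q=\frac{1}{(q^{B+1};q)_A}.
\end{equation*}

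I would establish this last identity by induction on $A$, the base case $A=0$ being immediate. For the inductive step I split ${A\brack s}_q={A-1\brack s}_q+q^{A-s}{A-1\brack s-1}_q$ via the recurrence \eqref{eq:Binom_rec}. The first piece reproduces $S(A-1,B)$; in the second piece the substitution $s\mapsto s+1$ together with $(q^{B+1};q)_{s+1}=(1-q^{B+1})(q^{B+2};q)_s$ produces $\frac{q^{A+B}}{1-q^{B+1}}S(A-1,B+1)$, the exponent collapsing to $s^2+(B+1)s$ exactly as needed. Invoking the induction hypothesis for both $S(A-1,B)=1/(q^{B+1};q)_{A-1}$ and $S(A-1,B+1)=1/(q^{B+2};q)_{A-1}$ and clearing denominators over $(q^{B+1};q)_A$, the numerator simplifies to $(1-q^{A+B})+q^{A+B}=1$, giving $S(A,B)=1/(q^{B+1};q)_A$ and closing the induction.

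The only nonroutine ingredient is the evaluation of $S(A,B)$, which is really the analytic heart of Bailey's lemma; everything else is bookkeeping (interchange of finite sums, the $j\mapsto -j-a$ symmetry, and the reindexing $r=j+s$). I expect this step to be the main obstacle chiefly because one must set up the induction so that the $q$-Pascal split feeds back into $S$ at shifted parameters; once the substitution $s\mapsto s+1$ is performed the algebra is forced. Alternatively, $S(A,B)$ may be recognized as a terminating ${}_1\phi_1$ and summed in one line by the $q$-Chu--Vandermonde (equivalently $q$-Gauss) evaluation, but the inductive argument above has the advantage of using only the recurrence \eqref{eq:Binom_rec} already recorded in the paper.
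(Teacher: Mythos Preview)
Your argument is correct. The reduction to the kernel identity is clean, the symmetry $j\mapsto -j-a$ is handled properly, the substitution $r=j+s$ and the passage to $S(A,B)$ are accurate, and the induction on $A$ via the recurrence \eqref{eq:Binom_rec} closes exactly as you claim.

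As for comparison with the paper: there is nothing to compare. The paper does not prove Theorem~\ref{thm:Bailey}; it merely records it as a known special case of Bailey's lemma, citing \cite{GEA_multi, Peter_Thesis}, and then uses it as a tool in Section~\ref{Sec:hierarchy}. What you have written is a self-contained proof where the paper offers only a citation. Your proof is essentially the classical derivation (the kernel sum $S(A,B)$ is a terminating ${}_1\phi_1$ summable by $q$-Chu--Vandermonde, as you note), carried out here with nothing beyond the $q$-Pascal recurrence already available in the paper. That makes your write-up a genuinely useful supplement rather than a restatement of anything in the text.
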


Observe that the right-hand side of the second equation in the Theorem~\ref{thm:Bailey} is of the same form as the right-hand side of the first equation. Thus, we may iterate Theorem~\ref{thm:Bailey} as often as we desire by updating $\alpha_{j}(q)$'s in each step. This procedure gives rise to an infinite hierarchy of polynomial identities.

\section{New Polynomial Identities}\label{Sec:New_Pol}

%\sum_{k=-\infty}^\infty \left(q^{(3k)^2} {2L\brack L+3k}_q - q^{(3k+1)^2} {2L\brack L+3k+1}_q  \right)\\
%\sum_{k=-\infty}^\infty \left( q^{3k(3k+1)} {2L\brack L+3k}_q - q^{(3k+1)(3k+2)} {2L\brack L+3k+1}_q  \right)

We prove Theorem~\ref{thm:New_Fin_Cap} by showing that both sides of \eqref{eq:fin_Cap1} and \eqref{eq:fin_Cap2} satisfy the recurrences \begin{align}
\label{eq:Rec_a_n_short}a_L &= (1+q-q^{2L-1})a_{L-1} -q \left(1-q^{2L-2}\right) \left(1-q^{2L-3}\right) a_{L-2},\intertext{and}
\label{eq:Rec_b_n_short}b_L &= (1+q-q^{2L})b_{L-1} -q\left(1-q^{2L-1}\right)\left(1-q^{2L-2}\right)  b_{L-2},
\end{align}respectively, and by checking the appropriate  initial conditions.

\begin{proof}

The $q$-Zeilberger algorithm \cite{AeqB} implemented in Riese's \texttt{qZeil} package \cite{qZeil} (for single-fold sums), Sister Celine's algorithm implemented in Riese's \texttt{qMultiSum} package \cite{qMultiSum}, Creative Telescoping implemented in the Mathematica package \texttt{HolonomicFunctions} of Koutschan \cite{HolonomicFunctions}, and Schneider's \texttt{Sigma} package \cite{Sigma}) are all sufficient tools to find and prove recurrences satisfied by the expressions in \eqref{eq:fin_Cap1} and \eqref{eq:fin_Cap2}. However, the outcome recurrences heavily depend on the representation of these functions. For that reason, we first rewrite the right-hand side expressions first
\begin{align}
\label{eq:finCap1_right_side_1}\sum_{j=-L}^L \Jacobi{j+1}{3} q^{j^2} {2L\brack L+j}_q &=\sum_{k=-\infty}^\infty \left(q^{(3k)^2} {2L\brack L+3k}_q - q^{(3k+1)^2} {2L\brack L+3k+1}_q  \right),\\
\label{eq:finCap1_right_side_2}&=\sum_{j=-\lfloor\frac{L}{3}\rfloor }^{\lfloor\frac{L}{3}\rfloor} q^{(3j)^2} \frac{1-q^{6j+1}}{1-q^{L+3j+1}} {2L\brack L+3j}_q - \delta_{3 \mid (L-2)} q^{L^2},\\
\label{eq:finCap2_right_side_1}\sum_{j=-L}^L \Jacobi{j+1}{3} q^{j(j+1)} {2L\brack L+j}_q &=\sum_{k=-\infty}^\infty \left( q^{3k(3k+1)} {2L\brack L+3k}_q - q^{(3k+1)(3k+2)} {2L\brack L+3k+1}_q  \right),\\
\label{eq:finCap2_right_side_2}&=\sum_{j=-\lfloor\frac{L}{3}\rfloor }^{\lfloor\frac{L}{3}\rfloor} q^{(3j)(3j-1)} \frac{1-q^{6j+2}-(1-q)q^{L+3j+1}}{1-q^{L+3j+1}} {2L\brack L+3j}_q - \delta_{3 \mid (L-2)} q^{L(L-1)},
\end{align} where $\delta_{a \mid b} = 1$ if $a\mid b$, and $0$ otherwise.

If one chooses to calculate the recurrence from the representations \eqref{eq:finCap1_right_side_1} and \eqref{eq:finCap2_right_side_1}, the orders of the recurrences found/proven would be 4 and 6, respectively. 

In certain cases, we find and prove better recurrences.
When we start with the alternative representations \eqref{eq:finCap1_right_side_2} we get the recurrence \eqref{eq:Rec_a_n_short}.
%\begin{align*}
%a_L &= (1+q-q^{2L-1})a_{L-1} -q \left(1-q^{2L-2}\right) \left(1-q^{2L-3}\right) a_{L-2}.
%\end{align*}

With the alternative representation \eqref{eq:finCap2_right_side_2} of the right-hand side of \eqref{eq:fin_Cap2}, which is analogous to \eqref{eq:finCap1_right_side_1}, we can only find a 4th order recurrence \eqref{eq:Rec_b_n_proven} using the automated proof techniques:
\begin{align}\label{eq:Rec_b_n_proven}
b_L &=  -(1 + q^2) (1 + q - q^{2 L-2}) b_{L-1}+q ((1 + q^2)(1+q+q^2)-2q^{2L-3}(1+q)(1+q^2)+q^{4L-7}(1+q^2+q^4)) b_{L-2}\\
    \nonumber&-
 q^3 (1 + q^2) (1 - q^{2L-4})  (1 - q^{2 L-5}) (1 + q - q^{2L-4}) b_{L-3} 
+q^6 (1 - q^{2L-4})(1 - q^{2 L-5})(1 - q^{2L-6})(1 - q^{2 L-7})  b_{L-4}.
\end{align} 

However, we can do better than that. Experimentally, using the \texttt{qFunctions} package \cite{qFunctions} of the second author and Ablinger, we can guess that the right-hand side of \eqref{eq:fin_Cap2} satisfies the 2nd order recurrence \eqref{eq:Rec_b_n_short}.
%\begin{align*}
%b_L &= (1+q-q^{2L})b_{L-1} -q\left(1-q^{2L-1}\right)\left(1-q^{2L-2}\right)  b_{L-2}.
%\end{align*}
Moreover, again by using \texttt{qFunctions} package \cite{qFunctions} we can prove that the greatest common divisor of \eqref{eq:Rec_b_n_proven} and \eqref{eq:Rec_b_n_short} is \eqref{eq:Rec_b_n_short}. To demonstrate that \eqref{eq:Rec_b_n_short} is a factor of \eqref{eq:Rec_b_n_proven}, let \begin{equation}\label{eq:Rec_r_L}r_L:= b_L - (1+q-q^{2L})b_{L-1} +q\left(1-q^{2L-1}\right)\left(1-q^{2L-2}\right)  b_{L-2}.\end{equation} Then it is easy to observe that \[r_L - q^2(1 + q - q^{2 L - 4}) r_{L-1} + q^5(1 - q^{2L-4}) (1 - q^{2 L-7}) r_{L-2}=0\] is equivalent to \eqref{eq:Rec_b_n_proven}. 

By checking the initial conditions, this is enough to prove that the right-hand sides of \eqref{eq:fin_Cap2} satisfy the guessed recurrence \eqref{eq:Rec_b_n_short}. This is done for $b_n$ as follows: We know that \eqref{eq:Rec_b_n_proven} and $4$ initial values fully determine the sequence $b_n$. We also know that \eqref{eq:Rec_b_n_short} divide this sequence and only require $2$ initial conditions to determine a sequence fully. We feed the first two initial conditions of \eqref{eq:Rec_b_n_proven} to \eqref{eq:Rec_b_n_short} and check that the next 2 sequence terms we get from \eqref{eq:Rec_b_n_short} are the same as the next 2 terms for \eqref{eq:Rec_b_n_short}. This way we prove that not only that the guessed recurrence \eqref{eq:Rec_b_n_short} divides the proven recurrence \eqref{eq:Rec_b_n_proven} that \eqref{eq:fin_Cap2} satisfies, but the recurrence \eqref{eq:Rec_b_n_short} also determines the same sequence fully with the 2 initial conditions of \eqref{eq:Rec_b_n_proven}.

Now, using Zeilberger's Creative Telescoping algorithm \cite{AeqB} (implemented in Koutschan's \texttt{HolonomicFunctions} package \cite{HolonomicFunctions}) we find the recurrences satisfied by the left-hand sides of \eqref{eq:fin_Cap1} and \eqref{eq:fin_Cap2}. The said implementation directly proves that, just like the right-hand side, the left-hand side of \eqref{eq:fin_Cap1} satisfies \eqref{eq:Rec_a_n_short}. Therefore, since both sides of \eqref{eq:fin_Cap1} satisfy the same 2nd order recurrence, we finish the proof of that identity by checking the two initial conditions at $L=0$ and 1.

For finding the recurrence of the left-hand side of \eqref{eq:fin_Cap2}, first we define\[S_{1,L} :=  \sum_{m,n\geq 0} \frac{q^{2m^2+  6 m n + 6n^2+m+3n} (q;q)_L}{(q;q)_{L-3n-2m}(q;q)_m(q^3;q^3)_n}\text{ and }S_{2,L} :=\sum_{m,n\geq 0 }  \frac{q^{2m^2+  6 m n + 6n^2+3m+6n+1} (q;q)_L}{(q;q)_{L-3n-2m-1}(q;q)_m(q^3;q^3)_n}. \] Then, we get that these sums satisfy the recurrences
\begin{align}
\label{eq:rec_S1}S_{1,L} &=  (1 + q + q^3 - q^L - q^{L+2})S_{1,L-1}\\
\nonumber&-q(1 - q^{L-1}) (1 + q^2 + q^3 - q^{L+1} - q^{2 L-1}- q^{2 L-2}) S_{1,L-2}\\
\nonumber&+q^4(1 - q^{L-1}) (1 - q^{L-2}) (1- q^{2 L-3})(1 - q^{2L-4}) S_{1,L-3}
\intertext{and}
\label{eq:rec_S2}(1-q^{L-1})S_{2,L} &= -(1 - q^L)(1+q+q^2-q^{L-1}-q^L)S_{2,L-1}\\ \nonumber&+  q(1 - q^L)(1 + q^{L-1}) (1 + q + q^2  - q^{ L-1} - q^{2 L-1}- q^{2 L-2}) S_{2,L-2}\\
    \nonumber
&-q^3(1 - q^L) (1 - q^{L-1}) (1- q^{L-2})  (1 - q^{2 L-3}) (1 - q^{2L-4})S_{2,L-3}.
\end{align}

These recurrences are similar but unlike the situation we encountered on the right-hand sides of \eqref{eq:finCap1_right_side_1} and \eqref{eq:finCap2_right_side_1}, they are not identical. 

We can find a recurrence that is satisfied by both sums on the left-hand side of \eqref{eq:fin_Cap2} by using the closure properties of holonomic (recurrent) sequences. This is merely a specialized substitution of one of the recurrences \eqref{eq:rec_S1} and \eqref{eq:rec_S2} into the other one. This is implemented in the \texttt{qGeneratingFunctions} package of Kauers and Koutschan \cite{qGeneratingFunctions}. This way we prove that the whole left-hand side satisfies the 5th order recurrence 
\begin{align}\label{eq:Rec_c_L}
c_L &=(1 + q + q^2 + q^3 + q^4 - q^{2 L-1} - q^L - q^{L+2}) c_{L-1}\\ \nonumber
&-q \left((1 + q^2) (1 + q + q^2 + q^3 + q^4)- q^{L-1} (1 + q) (1 + q^2)^2\right.\\\nonumber&\hspace{4cm}\left.- q^{2 L-2} (1 + q) (2 + q^2)+ q^{3 L-3} (1 + q)^2  +q^{4 L-5}\right) c_{L-2}\\
\nonumber&+  q^3 (1 - q^{L-2})\left((1 + q^2) (1 + q + q^2 + q^3 + q^4)- q^L (1 + q^2)  - 
 q^{2 L-4} (1 + q) (1 + 2 q + 2 q^2 + q^3 + q^4)\right.\\\nonumber&\hspace{4cm}\left.- q^{3 L-5} (1 + q) (1 - 2 q) + q^{4 L-7} (2 + 2 q + q^2)-q^{5 L-7}\right)c_{L-3}  \\
\nonumber&-q^6 (1 - q^{L-2}) (1 - q^{2L-6}) \left(1 - q^{2 L-5}) 
  ((1 + q + q^2 + q^3 + q^4)\right.\\\nonumber&\hspace{4cm}\left. - q^{L-3} (1 + q^2 + q^3) - q^{2 L-5} (1 + q + q^2) +
  q^{3 L-6}\right)c_{L-4}\\\nonumber
&+q^{10} (1 - q^{L-2})(1 - q^{L-4}) (1 - q^{2 L-5}) (1 - q^{2L-6}) (1 - q^{2 L-7}) (1 - q^{2L-8}) c_{L-5}. 
\end{align}

Comparing this with \eqref{eq:Rec_b_n_short}, we see that the greatest common divisor of these two recurrences is \eqref{eq:Rec_b_n_short}. One can easily verify this by checking that \begin{align*}r_L&-q^2(1 + q + q^2 - q^{L-2} + q^{L} + q^{2L -2} + q^{2 L-3}) r_{L-1}\\& + q^5(1-q^{L-2})(1+q+q^2-q^{L-3}-q^{2L-4}-q^{2L-5}+q^{3L-6}-q^{3L-7})r_{L-2}\\&+q^9(1 - q^{L-2}) (1 - q^{L-4}) (1 - q^{2 L-5})(1 - q^{2 L-6})r_{L-3} = 0
\end{align*} is equivalent to \eqref{eq:Rec_c_L}, where $r_L$ is defined as in \eqref{eq:Rec_r_L}.

Analogous to moving from the 4th degree recurrence \eqref{eq:Rec_b_n_proven} to the shorter recurrence \eqref{eq:Rec_b_n_short}, we prove that the left-hand side of \eqref{eq:fin_Cap2} satisfies this much shorted recurrence. This proves that the both the left- and right-hand sides of \eqref{eq:fin_Cap2} satisfies \eqref{eq:Rec_b_n_short} and by only checking the two initial conditions $L=0$ and $1$ we finish the proof of \eqref{eq:fin_Cap2}.
\end{proof}

Furthermore, we can prove a simple transformation formula for the right-hand side sum of \eqref{eq:fin_Cap2}.

\begin{theorem}
Let $L\in\mathbb{Z}_{\geq 0}$, then
\begin{align}
\label{eq:fin_Cap2_RHS_qBin_alternative}\sum_{j=-L}^L \Jacobi{j+1}{3} q^{j(j+1)} {2L\brack L-j}_q &= \sum_{j=-L-1}^{L+1} \Jacobi{j+1}{3} q^{j(j+1)} {2L+1\brack L-j}_q.
\end{align}
\end{theorem}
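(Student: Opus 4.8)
The plan is to prove the identity
\[
\sum_{j=-L}^L \Jacobi{j+1}{3} q^{j(j+1)} {2L\brack L-j}_q = \sum_{j=-L-1}^{L+1} \Jacobi{j+1}{3} q^{j(j+1)} {2L+1\brack L-j}_q
\]
by expanding the right-hand ${2L+1\brack L-j}_q$ via the Pascal-type recurrence \eqref{eq:Binom_rec} so that each term on the right decomposes into pieces living on the level-$2L$ binomials, and then reindexing the sums to match the left. Concretely, I would first apply \eqref{eq:Binom_rec} in the form ${2L+1\brack L-j}_q = {2L\brack L-j}_q + q^{L+1+j}{2L\brack L-1-j}_q$ (choosing the split that is convenient for the index shift). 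This splits the right-hand side into two sums over level-$2L$ binomials, one of which is almost the left-hand side already, while the other must be reindexed so that $L-1-j$ aligns with $L-j'$ for a shifted summation variable.

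The key combinatorial input is that the coefficient sequence is driven by the Jacobi symbol $\Jacobi{j+1}{3}$, which is periodic mod $3$ taking values $0,1,-1$ on $j\equiv 2,0,1\pmod 3$ respectively, together with the quadratic exponent $q^{j(j+1)}$. The heart of the argument is to track how the shift $j\mapsto j'$ in the second sum interacts with both the Jacobi symbol and the exponent: I expect that after shifting $j\to -1-j$ (which fixes the exponent since $(-1-j)(-j)=j(j+1)$ and sends $\Jacobi{j+1}{3}$ to $\Jacobi{-j}{3}=\Jacobi{j}{3}$ up to a sign rule), the extra factor $q^{L+1+j}$ and the remaining binomial will reorganize into a telescoping cancellation against the first sum. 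The upshot should be that the two level-$2L$ contributions collapse precisely to the single left-hand sum.

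An alternative, and perhaps cleaner, route is to use the well-known expansions already displayed in \eqref{eq:finCap2_right_side_1}, where the Jacobi-symbol sum is rewritten as
\[
\sum_{k=-\infty}^\infty \left( q^{3k(3k+1)} {2L\brack L+3k}_q - q^{(3k+1)(3k+2)} {2L\brack L+3k+1}_q \right),
\]
and to derive the analogous bilateral expansion for the right-hand side, namely the same kind of sum with $2L$ replaced by $2L+1$ and the binomial arguments adjusted. Writing both sides in this unrestricted bilateral form and then applying \eqref{eq:Binom_rec} term by term reduces the whole claim to a transparent reindexing of a doubly-infinite sum, where boundary terms vanish automatically because $1/(q;q)_n=0$ for negative $n$. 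This sidesteps the delta-function boundary corrections that appear in \eqref{eq:finCap1_right_side_2} and \eqref{eq:finCap2_right_side_2}.

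The main obstacle I anticipate is bookkeeping rather than depth: correctly matching the residue classes of $j$ modulo $3$ after the index shift so that the signs coming from $\Jacobi{j+1}{3}$ line up with the signs produced by the binomial recurrence. In particular one must verify that the exponent $q^{L+1+j}$ carried by the recurrence combines with $q^{j(j+1)}$ to reproduce exactly $q^{j'(j'+1)}$ for the shifted variable $j'$, and that no spurious term survives at the summation endpoints $j=\pm(L+1)$ on the right or $j=\pm L$ on the left. Provided these alignments check out, the identity follows purely formally from \eqref{eq:Binom_rec}; I would also confirm the result against small values $L=0,1,2$ as a sanity check before committing to the reindexing.
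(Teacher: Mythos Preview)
Your approach is essentially the same as the paper's: apply the recurrence \eqref{eq:Binom_rec} to ${2L+1\brack L-j}_q$, identify the first piece as the left-hand side, and show the residual sum $\sum_j \Jacobi{j+1}{3} q^{j(j+1)+(L+1+j)}{2L\brack L-j-1}_q$ vanishes via an index shift and the oddness of the Jacobi symbol. One small correction to your narrative: the second piece does not ``telescope against the first sum'' --- the first piece already \emph{is} the left-hand side, and the second piece vanishes outright. After the shift $j\mapsto j-1$ (or equivalently your $j\mapsto -1-j$ followed by the binomial symmetry ${2L\brack L+j}_q={2L\brack L-j}_q$), the residual sum becomes $q^{L}\sum_j \Jacobi{j}{3} q^{j^2}{2L\brack L-j}_q$, which is zero because the summand is odd under $j\mapsto -j$; this is exactly the paper's argument.
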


\begin{proof} We start by applying \eqref{eq:Binom_rec} to the right-hand side of \eqref{eq:fin_Cap2_RHS_qBin_alternative}:
\[
\sum_{j=-L-1}^{L+1} \Jacobi{j+1}{3} q^{j(j+1)} {2L+1\brack L-j}_q = \sum_{j=-L}^L \Jacobi{j+1}{3} q^{j(j+1)} {2L\brack L-j}_q + \sum_{j=-L-1}^{L+1} \Jacobi{j+1}{3} q^{j(j+1) + (L+j+1)} {2L\brack L-j-1}_q.
\]
Therefore, it is enough to prove that the rightmost series above vanishes. With the change of variables $j\mapsto j-1$, we get
\[
\sum_{j=-L-1}^{L+1} \Jacobi{j+1}{3} q^{j(j+1) + (L+j+1)} {2L\brack L-j-1}_q = q^L\sum_{j=-L}^{L} \Jacobi{j}{3} q^{j^2} {2L\brack L-j}_q.
\]
We point out that \[\Jacobi{j}{3} = -\Jacobi{j}{3},\text{  and  }{2L\brack L-j}_q = {2L\brack L+j}_q.\] Hence, by changing $j\mapsto -j$, we can now clearly see that \[\sum_{j=-L}^{L} \Jacobi{j}{3} q^{j^2} {2L\brack L-j}_q = - \sum_{j=-L}^{L} \Jacobi{j}{3} q^{j^2} {2L\brack L-j}_q = 0.\]
\end{proof}

Now, we move onto the proof of a series transformation involving $q$-Binomial coefficients.

\begin{theorem}\label{thm:k_transform} Let $L\in\mathbb{Z}_{\geq 0}$ and $k=1,2,\dots$, then
\begin{equation}
\label{eq:k_transform} \sum_{j=-L}^L \Jacobi{j+1}{3} q^{kj(j-1)} {2L\brack L-j}_q = q^L\sum_{j=-L}^L \Jacobi{j+1}{3} q^{kj^2-(k-1)j} {2L\brack L-j}_q.
\end{equation}
\end{theorem}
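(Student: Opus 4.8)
The plan is to reduce \eqref{eq:k_transform} to a single sum that is manifestly antisymmetric under the involution $j\mapsto 1-j$. Throughout write $\chi(j):=\Jacobi{j+1}{3}$ and read every sum as running over all of $\mathbb{Z}$, the $q$-binomial being zero outside its natural range. Comparing exponents, $kj^2-(k-1)j=kj(j-1)+j$, so the right-hand side of \eqref{eq:k_transform} is $\sum_{j}\chi(j)\,q^{kj(j-1)}\,q^{L+j}{2L\brack L-j}_q$. Subtracting it from the left-hand side, the assertion \eqref{eq:k_transform} becomes equivalent to
\[
\sum_{j}\chi(j)\,q^{kj(j-1)}\bigl(1-q^{L+j}\bigr){2L\brack L-j}_q=0.
\]

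First I would collapse the factor $\bigl(1-q^{L+j}\bigr){2L\brack L-j}_q$. Using the symmetry ${2L\brack L-j}_q={2L\brack L+j}_q$ together with the shift \eqref{eq:Binom_Shift}, one checks
\[
\bigl(1-q^{L+j}\bigr){2L\brack L-j}_q=\bigl(1-q^{2L}\bigr){2L-1\brack L-j}_q .
\]
For $L\ge 1$ the factor $1-q^{2L}$ is nonzero, so the identity reduces to
\[
\sum_{j}\chi(j)\,q^{kj(j-1)}{2L-1\brack L-j}_q=0 ,
\]
while for $L=0$ both sides of \eqref{eq:k_transform} equal $1$ and the claim is immediate.

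The heart of the argument is then to show that the summand of the last display is anti-invariant under $j\mapsto 1-j$. Three verifications are needed. The quadratic exponent is invariant, since $k(1-j)\bigl((1-j)-1\bigr)=kj(j-1)$. The $q$-binomial is invariant, since ${2L-1\brack L-(1-j)}_q={2L-1\brack L-1+j}_q={2L-1\brack L-j}_q$ by the symmetry of the lower index. Finally the character is anti-invariant: from multiplicativity and $\Jacobi{-1}{3}=\Jacobi{2}{3}=-1$ we get $\chi(1-j)=\Jacobi{2-j}{3}=\Jacobi{-(j+1)}{3}=-\Jacobi{j+1}{3}=-\chi(j)$. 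Because $j\mapsto 1-j$ is a fixed-point-free involution of the support $\{1-L,\dots,L\}$, the sum equals its own negative and therefore vanishes, completing the proof.

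I expect the only subtle point to be the character computation $\chi(1-j)=-\chi(j)$ and confirming that the range of summation is genuinely preserved by $j\mapsto 1-j$; both are routine once one notes $2-j\equiv-(j+1)\pmod 3$ and that $j=1-j$ has no integer solution, so the terms pair off cleanly with opposite signs. The reduction to the odd binomial ${2L-1\brack L-j}_q$ via \eqref{eq:Binom_Shift} is the step that makes the exponent $kj(j-1)$ symmetric about $j=\tfrac12$, which is precisely what the involution exploits.
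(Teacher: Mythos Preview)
Your proof is correct and follows essentially the same approach as the paper: both subtract the two sides, use \eqref{eq:Binom_Shift} to pass from ${2L\brack L-j}_q$ to ${2L-1\brack L-j}_q$, and then exploit a symmetry to kill the resulting sum. The only cosmetic difference is that the paper splits the sum into residue classes $j\equiv 0,1\pmod 3$ and sends $j\mapsto -j$ in the second class, whereas you package the same cancellation as the single fixed-point-free involution $j\mapsto 1-j$; these are the same pairing, and your formulation is arguably the cleaner of the two.
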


\begin{proof}
We take the difference of the two sides of \eqref{eq:k_transform} and show that it reduces to zero. Making use of \eqref{eq:Binom_Shift}, we have
\begin{align*}
\sum_{j=-L}^L \Jacobi{j+1}{3} q^{kj(j-1)} (1-q^{L+j}) {2L\brack L+j}_q &= (1-q^{2L}) \sum_{j=-L}^L \Jacobi{j+1}{3} q^{kj(j-1)}{2L-1\brack L+j-1}_q\\
&=(1-q^{2L}) \sum_{j=-L}^L \Jacobi{j+1}{3} q^{kj(j-1)}{2L-1\brack L-j}_q\\
&=(1-q^{2L}) \sum_{j=-L}^L \left\{ q^{3kj(3j-1)} {2L-1\brack L-3j} - q^{3kj(3j+1)} {2L-1\brack L-3j-1}_q \right\}
\end{align*}
Replacing $j\mapsto -j$ in the last term in the braces in the above term, we can notice that the terms in the braces vanish. Therefore the difference of the sides of \eqref{eq:k_transform} vanishes for each non-negative $L$ and positive integers $k$.
\end{proof}

Multiplying \eqref{eq:fin_Cap1} by $q^L$ and using \eqref{eq:k_transform} with $k=1$ on the right-hand side of the resulting equation we arrive at Corollary~\ref{cor:fin_Cap2_analogue}.

\begin{corollary}\label{cor:fin_Cap2_analogue}
Let $L\in\mathbb{Z}_{\geq 0}$, then
\begin{equation}
\label{eq:fin_Cap1_qBin_alternative}\sum_{m,n\geq 0} \frac{q^{L+2m^2+  6 m n + 6n^2} (q;q)_L}{(q;q)_{L-3n-2m}(q;q)_m(q^3;q^3)_n} = \sum_{j=-L}^L \Jacobi{j+1}{3} q^{j(j-1)} {2L\brack L-j}_q.
\end{equation}
\end{corollary}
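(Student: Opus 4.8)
The plan is to derive the corollary directly from the already-established identity \eqref{eq:fin_Cap1} together with the series transformation of Theorem~\ref{thm:k_transform}, so that neither a new recurrence nor a fresh induction is needed. The whole argument is a two-step chain of equalities.

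First I would take \eqref{eq:fin_Cap1} and multiply both of its sides by $q^L$. On the left this factor is absorbed into the summand, turning the exponent $2m^2+6mn+6n^2$ into $L+2m^2+6mn+6n^2$, which is exactly the left-hand side of the target identity \eqref{eq:fin_Cap1_qBin_alternative}. Thus all the substance of the proof migrates to the right-hand side, where I must show that
\[
q^L\sum_{j=-L}^L \Jacobi{j+1}{3} q^{j^2} {2L\brack L-j}_q = \sum_{j=-L}^L \Jacobi{j+1}{3} q^{j(j-1)} {2L\brack L-j}_q.
\]

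The second step is to recognize this as precisely the $k=1$ instance of \eqref{eq:k_transform}. Specializing that transformation at $k=1$, the exponent $kj^2-(k-1)j$ on its right-hand side collapses to $j^2$, while the exponent $kj(j-1)$ on its left-hand side becomes $j(j-1)$; the Jacobi symbol and the $q$-binomial coefficient are left untouched. Reading the resulting equality from right to left is exactly the display above, and chaining it with the $q^L$-multiplied form of \eqref{eq:fin_Cap1} yields the claim.

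The main (and admittedly very mild) obstacle is purely bookkeeping: one must verify that the exponent specialization $kj^2-(k-1)j \mapsto j^2$ at $k=1$ is what makes \eqref{eq:fin_Cap1} the correct seed, since had the target exponent on the right been anything other than $j(j-1)$ a different value of $k$ (or a different seed identity altogether) would be required. Because Theorem~\ref{thm:k_transform} has already been proven for all positive integers $k$ via the $q$-binomial shift \eqref{eq:Binom_Shift}, and because \eqref{eq:fin_Cap1} is established in Theorem~\ref{thm:New_Fin_Cap}, there is nothing further to check.
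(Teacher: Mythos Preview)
Your proof is correct and matches the paper's own argument exactly: the paper states that ``Multiplying \eqref{eq:fin_Cap1} by $q^L$ and using \eqref{eq:k_transform} with $k=1$ on the right-hand side of the resulting equation we arrive at Corollary~\ref{cor:fin_Cap2_analogue},'' which is precisely the two-step chain you describe.
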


\section{New Infinite Hierarchies}\label{Sec:hierarchy}

%\subsection{New Hierarchies Which We could have Written in Peter's volume}

Applying Teorem~\ref{thm:Bailey} with $a=0$ and $q\mapsto q^3$, $f$ times in iterative fashion to \eqref{eq:Fin_Cap1_Binomial}, we derive:

\begin{theorem}\label{thm:hierarchy_Fin_Cap1_Binomial}
Let $L\in\mathbb{Z}_{\geq0}$, $f\in\mathbb{N}$ and $N_i := n_i+n_{i+1}+\dots+n_f$ with $i=1,2\dots, f$, then
\begin{align}
\label{eq:hierarchy_Fin_Cap1_Binomial}
\sum_{m,n,n_1,n_2,\dots,n_f\geq 0 } &\frac{q^{2m^2+6mn+6n^2+3(N_1^2 + N_2^2+\dots+N_f^2)}(q^3;q^3)_{2L} (q^3;q^3)_{n_f}}{(q;q)_m (q^3;q^3)_n (q^3;q^3)_{L-N_1}(q^3;q^3)_{n_f-2n-m }(q^3;q^3)_{n_1}(q^3;q^3)_{n_2}\dots(q^3;q^3)_{n_{f-1}}(q^3;q^3)_{2n_f}}\\ \nonumber
&\hspace{3cm}  =  \sum_{j=-L}^{L} q^{3(f+1)j^2+j} {2L\brack L-j}_{q^3}.
\end{align}
\end{theorem}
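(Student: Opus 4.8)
The plan is to read the base identity \eqref{eq:Fin_Cap1_Binomial} as the seed of a Bailey chain and then apply Theorem~\ref{thm:Bailey} (with $a=0$ and $q\mapsto q^3$) exactly $f$ times. Writing the right-hand side of \eqref{eq:Fin_Cap1_Binomial} as $\sum_{j}\alpha^{(0)}_j{2M\brack M-j}_{q^3}$ with $\alpha^{(0)}_j:=q^{3j^2+j}$, I identify the double sum on the left as the function $F^{(0)}(M)$ attached to this coefficient sequence. The crucial structural fact, already noted just after Theorem~\ref{thm:Bailey}, is that the output of one Bailey step is again of the form $\sum_j\alpha_j{2L\brack L-j}_{q^3}$, with coefficients updated by $\alpha_j\mapsto\alpha_j q^{3j^2}$ (this is $q^{j^2+aj}$ with $a=0$ under $q\mapsto q^3$); hence the iteration is legitimate at every stage, and after $f$ steps the sequence is $\alpha^{(f)}_j=q^{3j^2+j}\,(q^{3j^2})^{f}=q^{3(f+1)j^2+j}$. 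This already produces the right-hand side of \eqref{eq:hierarchy_Fin_Cap1_Binomial}, since ${2L\brack L-j}_{q^3}$ vanishes for $|j|>L$, so the $j$-sum may be truncated to $-L\le j\le L$.

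The remaining work is entirely on the left-hand side, where I must unroll the $f$ nested Bailey sums. Introducing a summation variable $a_k$ for the argument of $F^{(f-k)}$ (with $a_0:=L$), a single step contributes a weight of the form $q^{3a_k^2}(q^3;q^3)_{2a_{k-1}}/\bigl[(q^3;q^3)_{a_{k-1}-a_k}(q^3;q^3)_{2a_k}\bigr]$, so that
\[
F^{(f)}(L)=\sum_{a_1,\dots,a_f\ge 0}\ \prod_{k=1}^{f}\frac{q^{3a_k^2}(q^3;q^3)_{2a_{k-1}}}{(q^3;q^3)_{a_{k-1}-a_k}(q^3;q^3)_{2a_k}}\ F^{(0)}(a_f).
\]
Here the factors $(q^3;q^3)_{2a_{k-1}}$ in the numerator and $(q^3;q^3)_{2a_k}$ in the denominator telescope, collapsing to $(q^3;q^3)_{2L}/(q^3;q^3)_{2a_f}$, while the powers accumulate to $q^{3(a_1^2+\dots+a_f^2)}$. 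Expanding $F^{(0)}(a_f)$ as the $(m,n)$ double sum from \eqref{eq:Fin_Cap1_Binomial} then yields an $(f+2)$-fold sum with numerator $(q^3;q^3)_{2L}(q^3;q^3)_{a_f}$ and denominator $(q;q)_m(q^3;q^3)_n(q^3;q^3)_{a_f-2n-m}(q^3;q^3)_{2a_f}\prod_{k=1}^{f}(q^3;q^3)_{a_{k-1}-a_k}$.

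Finally I would pass to the stated variables via $a_k=N_k=n_k+\dots+n_f$, the substitution that makes $N_k-N_{k+1}=n_k$ and $a_0-a_1=L-N_1$. Under this change the surviving differences $a_{k-1}-a_k$ become $L-N_1,n_1,n_2,\dots,n_{f-1}$, the index $a_f$ becomes $n_f$, and $3(a_1^2+\dots+a_f^2)=3(N_1^2+\dots+N_f^2)$; matching these against the denominator $(q;q)_m(q^3;q^3)_n(q^3;q^3)_{L-N_1}(q^3;q^3)_{n_f-2n-m}(q^3;q^3)_{n_1}\cdots(q^3;q^3)_{n_{f-1}}(q^3;q^3)_{2n_f}$ reproduces \eqref{eq:hierarchy_Fin_Cap1_Binomial} exactly. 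I expect the only genuine obstacle to be the bookkeeping: keeping the telescoping of the $(q^3;q^3)_{2a_k}$ factors straight and confirming that the reindexing $a_k=N_k$ sends each denominator factor to its claimed counterpart. The analytic content is supplied entirely by Theorem~\ref{thm:Bailey}, which guarantees that every intermediate $F^{(k)}$ is again an admissible Bailey input, so no question of well-definedness or convergence arises.
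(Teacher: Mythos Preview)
Your proposal is correct and follows exactly the approach indicated in the paper: iterate Theorem~\ref{thm:Bailey} with $a=0$ and $q\mapsto q^3$, $f$ times, starting from \eqref{eq:Fin_Cap1_Binomial}. The paper states this in a single sentence and omits the bookkeeping; your write-up supplies the telescoping of the $(q^3;q^3)_{2a_k}$ factors and the reindexing $a_k=N_k$ explicitly, and these details are carried out correctly.
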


And in the limit $L\rightarrow\infty$, with the help of \eqref{eq:JTP}, \eqref{eq:hierarchy_Fin_Cap1_Binomial} yields

\begin{theorem}\label{thm:hierarchy_Fin_Cap1_Binomial_Limit} Let $f\in\mathbb{N}$ and $N_i := n_i+n_{i+1}+\dots+n_f$ with $i=1,2\dots, f$, then
\begin{align}
\nonumber%\label{eq:hierarchy_Fin_Cap1_Binomial_Limit}
\sum_{m,n,n_1,n_2,\dots,n_f\geq 0 } &\frac{q^{2m^2+6mn+6n^2+3(N_1^2 + N_2^2+\dots+N_f^2)} (q^3;q^3)_{n_f}}{(q;q)_m (q^3;q^3)_n (q^3;q^3)_{n_f-2n-m }(q^3;q^3)_{n_1}(q^3;q^3)_{n_2}\dots(q^3;q^3)_{n_{f-1}}(q^3;q^3)_{2n_f}}\\ \nonumber
&\hspace{3cm}  =  \frac{(q^{6f+6}, -q^{3f+2}, -q^{3f+4}; q^{6f+6})_\infty}{(q^3;q^3)_\infty}.
\end{align}
\end{theorem}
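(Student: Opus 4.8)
The plan is to derive Theorem~\ref{thm:hierarchy_Fin_Cap1_Binomial_Limit} directly as the $L\to\infty$ specialization of the finite identity in Theorem~\ref{thm:hierarchy_Fin_Cap1_Binomial}, exactly as the surrounding text indicates. Fixing $|q|<1$ so that every series in sight converges absolutely, I would pass to the limit termwise on each side and then recognize the resulting bilateral theta series as an infinite product through the Jacobi triple product identity \eqref{eq:JTP}.

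On the left-hand side I would hold the summation indices $m,n,n_1,\dots,n_f$ fixed, so that $N_1=n_1+\dots+n_f$ is fixed, and isolate the only $L$-dependent factor, namely $(q^3;q^3)_{2L}/(q^3;q^3)_{L-N_1}$. Writing this ratio as $\prod_{i=L-N_1+1}^{2L}(1-q^{3i})$, a tail of the convergent infinite product, one sees it tends to $1$ as $L\to\infty$, while the constraint $L-N_1\geq 0$ forced by the factor $1/(q^3;q^3)_{L-N_1}$ is simultaneously lifted; hence the summation ranges freely over all nonnegative $m,n,n_1,\dots,n_f$, reproducing precisely the left-hand side of the target theorem.

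On the right-hand side I would apply the $q$-binomial limit $\lim_{L\to\infty}{2L\brack L-j}_{q^3}=1/(q^3;q^3)_\infty$ recorded in Section~\ref{Sec:Background}, giving
\[
\sum_{j=-L}^{L} q^{3(f+1)j^2+j}{2L\brack L-j}_{q^3}\longrightarrow \frac{1}{(q^3;q^3)_\infty}\sum_{j=-\infty}^\infty q^{3(f+1)j^2+j}.
\]
To evaluate the bilateral sum I would invoke \eqref{eq:JTP} with base $q^{3(f+1)}$ in place of $q$ and with $z=q$, so that $q^{3(f+1)j^2+j}$ matches the summand $\bigl(q^{3(f+1)}\bigr)^{j^2}z^{j}$. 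This yields
\[
\sum_{j=-\infty}^\infty q^{3(f+1)j^2+j}=\bigl(-q^{3f+4},-q^{3f+2},q^{6f+6};q^{6f+6}\bigr)_\infty,
\]
which is exactly the numerator of the claimed product; dividing by $(q^3;q^3)_\infty$ then recovers the right-hand side of Theorem~\ref{thm:hierarchy_Fin_Cap1_Binomial_Limit}.

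Since the algebra is routine once the finite identity is granted, the only point genuinely requiring care is the justification of the termwise limit. For $0<q<1$ the $L$-dependent ratio on the left is squeezed between the positive constants $(q^3;q^3)_\infty$ and $1/(q^3;q^3)_\infty$, so each summand is dominated uniformly in $L$ by a summable expression and dominated convergence applies; the same bounds hold with absolute values for complex $|q|<1$. Equivalently, working in the ring of formal power series, the positive-definiteness of the quadratic form $2m^2+6mn+6n^2+3(N_1^2+\dots+N_f^2)$ makes the sum locally finite, and for each fixed tuple of indices the ratio equals $1+O(q^{3(L-N_1+1)})$, so every $q$-coefficient stabilizes as $L\to\infty$. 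Either formulation makes the passage to the limit rigorous and completes the proof.
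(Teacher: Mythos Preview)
Your proposal is correct and follows exactly the route the paper indicates: let $L\to\infty$ in Theorem~\ref{thm:hierarchy_Fin_Cap1_Binomial}, use the $q$-binomial limit on the right-hand side, and identify the resulting bilateral sum via the Jacobi triple product~\eqref{eq:JTP} with $q\mapsto q^{3(f+1)}$, $z=q$. The paper's proof is the single clause ``in the limit $L\rightarrow\infty$, with the help of \eqref{eq:JTP}, \eqref{eq:hierarchy_Fin_Cap1_Binomial} yields,'' and you have simply supplied the routine details (including a careful justification of the termwise limit) that the paper leaves implicit.
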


It is clear that the products in Theorems~\ref{thm:hierarchy_Fin_Cap1_Binomial_Limit} and \ref{thm:End_of_s_hierarchy} are identical when $2f+2 = (\nu+1)(\nu+2)$. This observation implies the following transformation.

\begin{corollary} For $f = \nu(\nu+3)/2$, where $\nu$ is a positive integer, $N’_j :=n_j+ ….+ n_\nu$  for $j=1,2,\dots,\nu$ and $N_i := n_i+n_{i+1}+\dots+n_f$ with $i=1,2\dots, f$, we have
\begin{align*}
\sum_{\substack{i,m,n_1,n_2,\dots,n_\nu\geq 0,\\ i+m \equiv N'_1+N'_2+\dots + N'_\nu \text{ (mod 2)}}}& \frac{q^{\frac{m^2+3(i^2+N'_1{}^2+N'_2{}^2+\dots + N'_\nu{}^2)}{2}}}{(q^3;q^3)_i} {3n_\nu\brack m}_{q} {2n_\nu + \frac{i-N'_1-N'_2-\dots-N'_\nu-m}{2}\brack 2n_\nu}_{q^3} \prod_{j=1}^{\nu-1} {i- \sum_{k=1}^j N'_k+n_j \brack n_j}_{q^3}\\
=\sum_{m,n,n_1,n_2,\dots,n_f\geq 0 } &\frac{q^{2m^2+6mn+6n^2+3(N_1^2 + N_2^2+\dots+N_f^2)} (q^3;q^3)_{n_f}}{(q;q)_m (q^3;q^3)_n (q^3;q^3)_{n_f-2n-m }(q^3;q^3)_{n_1}(q^3;q^3)_{n_2}\dots(q^3;q^3)_{n_{f-1}}(q^3;q^3)_{2n_f}}.
\end{align*}
\end{corollary}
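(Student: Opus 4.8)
The plan is to prove the corollary by pure transitivity, recognizing that each of its two sides is a multi-sum whose closed form has already been established in the excerpt, and that these two closed forms coincide under the stated relation $f=\nu(\nu+3)/2$. First I would observe that the right-hand side of the corollary is identical, term for term, with the summation on the left-hand side of Theorem~\ref{thm:hierarchy_Fin_Cap1_Binomial_Limit} (the composite sums being $N_i=n_i+\dots+n_f$), so by that theorem it equals
\[
\frac{(q^{6f+6},\,-q^{3f+2},\,-q^{3f+4};\,q^{6f+6})_\infty}{(q^3;q^3)_\infty}.
\]
Likewise, the left-hand side of the corollary is exactly the summation on the left-hand side of Theorem~\ref{thm:End_of_s_hierarchy}, now with the composite sums played by $N'_j=n_j+\dots+n_\nu$, so by that theorem it equals
\[
\frac{(q^{6\binom{\nu+2}{2}},\,-q^{3\binom{\nu+2}{2}+1},\,-q^{3\binom{\nu+2}{2}-1};\,q^{6\binom{\nu+2}{2}})_\infty}{(q^3;q^3)_\infty}.
\]

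The only computation is to check that these two products agree when $f=\nu(\nu+3)/2$. Since $\binom{\nu+2}{2}=(\nu+1)(\nu+2)/2$, this value of $f$ is precisely the solution of $f+1=\binom{\nu+2}{2}$, equivalently $2f+2=(\nu+1)(\nu+2)$, the relation singled out in the remark preceding the corollary. Substituting $f+1=\binom{\nu+2}{2}$ gives $6f+6=6\binom{\nu+2}{2}$, $3f+2=3\binom{\nu+2}{2}-1$, and $3f+4=3\binom{\nu+2}{2}+1$, so the base and both numerator arguments match exactly and the two infinite products are literally the same expression. Transitivity then yields the asserted equality of the two multi-sums, and since both underlying identities hold for $|q|<1$ (each being the $L\to\infty$ limit of an already-proved polynomial identity), no further convergence or interchange-of-limits argument is needed.

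The genuine content here is not in the proof, which is a one-line chaining of two established evaluations, but in the observation that two structurally very different multi-sums — one indexed by $\nu$ variables under a parity constraint with the exponent $\tfrac{m^2+3(i^2+N'_1{}^2+\dots+N'_\nu{}^2)}{2}$ rendered integral by that constraint, the other indexed by the larger number $f=\nu(\nu+3)/2$ of variables with no congruence condition — collapse to the same product. The one place to be careful is bookkeeping: the dummy names $n_1,n_2,\dots$ are reused on the two sides but refer to independent summations, so I would keep the two families of composite sums ($N'$ over the $\nu$ indices versus $N$ over the $f$ indices) strictly separate, and make sure the $q\mapsto q^3$ and iteration conventions under which Theorem~\ref{thm:hierarchy_Fin_Cap1_Binomial_Limit} was derived are the ones in force. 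I do not anticipate any analytic obstacle beyond this notational care.
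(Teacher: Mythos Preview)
Your proposal is correct and follows exactly the approach the paper takes: the corollary is stated immediately after the observation that the products in Theorems~\ref{thm:End_of_s_hierarchy} and~\ref{thm:hierarchy_Fin_Cap1_Binomial_Limit} coincide when $2f+2=(\nu+1)(\nu+2)$, and the transformation is simply the transitive equality of the two sum sides through this common product. Your arithmetic check that $f=\nu(\nu+3)/2$ is equivalent to $f+1=\binom{\nu+2}{2}$, and hence that $3f+2=3\binom{\nu+2}{2}-1$, $3f+4=3\binom{\nu+2}{2}+1$, $6f+6=6\binom{\nu+2}{2}$, is exactly what is needed.
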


Applying Theorem~\ref{thm:Bailey} with $a=1$ and $q\mapsto q^3$, $f$ times in iterative fashion to \eqref{eq:Fin_Cap2_Binomial}, we derive:

\begin{theorem}\label{thm:hierarchy_Fin_Cap2_Binomial}
Let $L\in\mathbb{Z}_{\geq0}$, $f\in\mathbb{N}$ and $N_i := n_i+n_{i+1}+\dots+n_f$ with $i=1,2\dots, f$, then
\begin{align}
\label{eq:hierarchy_Fin_Cap2_Binomial}
\sum_{m,n,n_1,n_2,\dots,n_f\geq 0 } &\frac{q^{2m^2+6mn+6n^2+m+3n +3(N_1^2 + N_2^2+\dots+N_f^2+N_1+N_2+\dots+N_f)} ( 1+q^{1+2m+3n}) (q^3;q^3)_{2L+1} (q^3;q^3)_{n_f}}{(q;q)_m (q^3;q^3)_n (q^3;q^3)_{L-N_1}(q^3;q^3)_{n_f-2n-m }(q^3;q^3)_{n_1}(q^3;q^3)_{n_2}\dots(q^3;q^3)_{n_{f-1}}(q^3;q^3)_{2n_f+1}}\\ \nonumber
&\hspace{3cm}  =  \sum_{j=-L-1}^{L+1} q^{3(f+1)j^2+(3f+2)j} {2L+1\brack L-j}_{q^3}.
\end{align}
\end{theorem}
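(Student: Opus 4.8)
The plan is to recognise \eqref{eq:Fin_Cap2_Binomial} as the seed of a Bailey chain and then push it through Theorem~\ref{thm:Bailey} exactly $f$ times with $a=1$ and $q\mapsto q^{3}$. First I would merge the two double sums on the left of \eqref{eq:Fin_Cap2_Binomial}: since the second summand is $q^{2m+3n+1}$ times the first, the left-hand side equals
\[
\mathcal{F}(M):=\sum_{m,n\geq 0}\frac{q^{2m^{2}+6mn+6n^{2}+m+3n}\,(1+q^{1+2m+3n})\,(q^{3};q^{3})_{M}}{(q;q)_{m}(q^{3};q^{3})_{n}(q^{3};q^{3})_{M-2n-m}}.
\]
With this, \eqref{eq:Fin_Cap2_Binomial} reads $\mathcal{F}(M)=\sum_{j}q^{3j^{2}+2j}{2M+1\brack M-j}_{q^{3}}$, which is exactly the hypothesis $F_{1}(M,q^{3})=\sum_{j}\alpha_{j}{2M+1\brack M-j}_{q^{3}}$ of Theorem~\ref{thm:Bailey} with starting coefficients $\alpha_{j}=q^{3j^{2}+2j}$.

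The right-hand side of the chain is the easy half. Substituting $q\mapsto q^{3}$ into the polynomial identity of Theorem~\ref{thm:Bailey} is legitimate, and for $a=1$ one pass sends $\alpha_{j}\mapsto\alpha_{j}\,(q^{3})^{j^{2}+j}=\alpha_{j}\,q^{3j^{2}+3j}$. Iterating $f$ times from $\alpha_{j}=q^{3j^{2}+2j}$ yields $\alpha_{j}=q^{3(f+1)j^{2}+(3f+2)j}$, so the right-hand side becomes $\sum_{j}q^{3(f+1)j^{2}+(3f+2)j}{2L+1\brack L-j}_{q^{3}}$; the stated range $-L-1\le j\le L+1$ is harmless because the $q$-binomial already vanishes outside $-L-1\le j\le L$.

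The substance is the left-hand side, which I would handle by induction on the number of passes. Introducing the Bailey variable $k_{1}$ at the first pass (it takes over the seed parameter $M$), then $k_{2},\dots,k_{f}$, each pass inserts the kernel $\dfrac{q^{3(r^{2}+r)}(q^{3};q^{3})_{2L+1}}{(q^{3};q^{3})_{L-r}(q^{3};q^{3})_{2r+1}}$. The key simplification is that the numerator factor $(q^{3};q^{3})_{2k_{j+1}+1}$ coming from the $j$-th kernel cancels the denominator factor $(q^{3};q^{3})_{2k_{j+1}+1}$ of the $(j{+}1)$-st kernel, so after $f$ passes the left-hand side collapses to
\[
\sum_{k_{1},\dots,k_{f}\geq 0}\frac{q^{3(k_{1}^{2}+\dots+k_{f}^{2})+3(k_{1}+\dots+k_{f})}(q^{3};q^{3})_{2L+1}}{(q^{3};q^{3})_{L-k_{f}}(q^{3};q^{3})_{k_{f}-k_{f-1}}\cdots(q^{3};q^{3})_{k_{2}-k_{1}}(q^{3};q^{3})_{2k_{1}+1}}\,\mathcal{F}(k_{1}).
\]

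Finally I would reindex by $k_{j}=N_{f+1-j}$, turning the Bailey variables into the partial sums $N_{i}=n_{i}+\dots+n_{f}$ and the consecutive differences into $k_{j}-k_{j-1}=n_{f+1-j}$. This sends $(q^{3};q^{3})_{L-k_{f}}$ to $(q^{3};q^{3})_{L-N_{1}}$, produces the product $(q^{3};q^{3})_{n_{1}}\cdots(q^{3};q^{3})_{n_{f-1}}$, leaves $(q^{3};q^{3})_{2k_{1}+1}=(q^{3};q^{3})_{2n_{f}+1}$, and turns $\mathcal{F}(k_{1})=\mathcal{F}(n_{f})$ into the seed block with $M=n_{f}$; the exponent becomes $3(N_{1}^{2}+\dots+N_{f}^{2}+N_{1}+\dots+N_{f})$, recovering the left-hand side of \eqref{eq:hierarchy_Fin_Cap2_Binomial}. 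The hard part will be precisely this bookkeeping: establishing the inductive form of the multisum, verifying the telescoping cancellation of the $(q^{3};q^{3})_{2k_{j}+1}$ factors at each stage, and checking that the single reindexing $k_{j}=N_{f+1-j}$ simultaneously rewrites every denominator and the quadratic-linear exponent into the claimed shape; everything else is a direct application of Theorem~\ref{thm:Bailey}.
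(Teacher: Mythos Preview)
Your proposal is correct and follows exactly the route the paper takes: the authors simply state that \eqref{eq:hierarchy_Fin_Cap2_Binomial} is obtained by applying Theorem~\ref{thm:Bailey} with $a=1$ and $q\mapsto q^{3}$, $f$ times iteratively, to \eqref{eq:Fin_Cap2_Binomial}. Your write-up just unpacks this one sentence---merging the two seed sums via the factor $(1+q^{1+2m+3n})$, tracking the telescoping of the $(q^{3};q^{3})_{2k_j+1}$ factors through the chain, and reindexing $k_j=N_{f+1-j}$---so there is nothing to compare; it is the same proof written out in full.
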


Notice that as $L\rightarrow\infty$ , with the help of \eqref{eq:JTP}, \eqref{eq:hierarchy_Fin_Cap2_Binomial} yields

\begin{theorem}Let $f\in\mathbb{N}$ and $N_i := n_i+n_{i+1}+\dots+n_f$ with $i=1,2\dots, f$, then
\begin{align}
\nonumber%\label{eq:hierarchy_Fin_Cap2_Binomial_Limit}
\sum_{m,n,n_1,n_2,\dots,n_f\geq 0 } &\frac{q^{2m^2+6mn+6n^2+m+3n +3(N_1^2 + N_2^2+\dots+N_f^2+N_1+N_2+\dots+N_f)} ( 1+q^{1+2m+3n})(q^3;q^3)_{n_f}}{(q;q)_m (q^3;q^3)_n (q^3;q^3)_{n_f-2n-m }(q^3;q^3)_{n_1}(q^3;q^3)_{n_2}\dots(q^3;q^3)_{n_{f-1}}(q^3;q^3)_{2n_f+1}}\\ \nonumber
&\hspace{3cm}  =  \frac{1}{(q^3;q^3)_\infty}\sum_{j=-\infty}^{\infty} q^{3(f+1)j^2+(3f+2)j}\\ \nonumber
&\hspace{3cm}  =  \frac{(q^{6f+6}, -q, -q^{6f+5}; q^{6f+6})_\infty}{(q^3;q^3)_\infty}.
\end{align}
\end{theorem}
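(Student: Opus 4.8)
The statement to be proved is precisely the $L\to\infty$ limit of Theorem~\ref{thm:hierarchy_Fin_Cap2_Binomial}, so the plan is to pass to the limit term by term on both sides of \eqref{eq:hierarchy_Fin_Cap2_Binomial} and then evaluate the resulting bilateral theta series by the Jacobi triple product identity \eqref{eq:JTP}. I would treat all the multisums as analytic functions of $q$ on $|q|<1$ (equivalently, as formal power series in $q$), so that the two computations below are routine and the only genuine point is the legitimacy of interchanging $\lim_{L\to\infty}$ with the summation; I address that at the end.

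First I would analyze the left-hand side of \eqref{eq:hierarchy_Fin_Cap2_Binomial}. Its dependence on $L$ is carried entirely by the factor $(q^3;q^3)_{2L+1}/(q^3;q^3)_{L-N_1}$, since $N_1=n_1+\dots+n_f$ and every remaining factor involves only $m,n,n_1,\dots,n_f$. For a fixed tuple $(m,n,n_1,\dots,n_f)$, once $L\ge N_1$ the factor $1/(q^3;q^3)_{L-N_1}$ is nonzero, and as $L\to\infty$ both $(q^3;q^3)_{2L+1}$ and $(q^3;q^3)_{L-N_1}$ tend to $(q^3;q^3)_\infty$, so their ratio tends to $1$. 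The surviving summand is exactly the summand on the left-hand side of the target identity, which gives the left-hand side in the limit.

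Next I would treat the right-hand side. Using the background limit $\lim_{L\to\infty}{2L+1\brack L-j}_{q^3}=1/(q^3;q^3)_\infty$ (the stated limit of ${2L+a\brack L-j}_q$ with $a=1$ and $q\mapsto q^3$) and extending the bounds $-L-1\le j\le L+1$ to all of $\mathbb{Z}$ (the $q$-binomial vanishes outside this range), the right-hand side of \eqref{eq:hierarchy_Fin_Cap2_Binomial} tends to
\[
\frac{1}{(q^3;q^3)_\infty}\sum_{j=-\infty}^{\infty} q^{3(f+1)j^2+(3f+2)j},
\]
which is the first claimed equality. To reach the product form I would apply \eqref{eq:JTP} with its base specialized to $q^{3(f+1)}$ and with $z=q^{3f+2}$: the quadratic term $(q^{3(f+1)})^{j^2}$ and the factor $z^{j}=q^{(3f+2)j}$ reproduce the summand, while the three product factors become $-zq^{3(f+1)}=-q^{6f+5}$, $-q^{3(f+1)}/z=-q$, and $q^{6(f+1)}=q^{6f+6}$. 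Hence the series equals $(q^{6f+6},-q,-q^{6f+5};q^{6f+6})_\infty$, and dividing by $(q^3;q^3)_\infty$ yields the second claimed equality.

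The main obstacle is justifying the termwise passage to the limit. I would do this by observing that for $|q|<1$ the extra factor $(q^3;q^3)_{2L+1}/(q^3;q^3)_{L-N_1}$ is bounded locally uniformly in $L$ (for real $q\in(0,1)$ one has $(q^3;q^3)_\infty\le (q^3;q^3)_n\le 1$, so the ratio is at most $1/(q^3;q^3)_\infty$), which makes every summand dominated by a convergent $L$-independent majorant and lets dominated convergence apply. Equivalently, and perhaps most cleanly, in the formal power series ring the coefficient of each fixed power $q^{n}$ receives contributions from only finitely many tuples, and for $L$ large the $L$-dependent factor contributes $1+O(q^{\text{large}})$ to each such term; hence every coefficient stabilizes, the limit is well defined, and it may be taken term by term. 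With this in hand, the two limit evaluations above complete the proof.
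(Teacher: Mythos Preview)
Your proposal is correct and follows exactly the route the paper takes: the paper obtains this theorem by letting $L\to\infty$ in \eqref{eq:hierarchy_Fin_Cap2_Binomial} and then invoking the Jacobi triple product identity \eqref{eq:JTP}, which is precisely what you do. Your treatment is in fact more explicit than the paper's one-line justification, since you spell out the termwise limits on both sides and the specialization of \eqref{eq:JTP}.
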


We can apply Theorem~\ref{thm:Bailey} with $a=0$ and $q\mapsto q^3$ iteratively to derive the infinite family that roots from \eqref{eq:Fin_Sum_of_capparelli's}.

\begin{theorem}\label{thm:hierarchy_Fin_Sum_of_Capparellis}Let $L\in\mathbb{Z}_{\geq0}$, $f\in\mathbb{N}$ and $N_i := n_i+n_{i+1}+\dots+n_f$ with $i=1,2\dots, f$, then
\begin{align}
\label{eq:hierarchy_Fin_Sum_of_Capparellis}
\sum_{m,n,n_1,n_2,\dots,n_f\geq 0 } &\frac{q^{2m^2+6mn+6n^2-2m-3n +3(N_1^2 + N_2^2+\dots+N_f^2)} (q^3;q^3)_{2L} (q^3;q^3)_{n_f} (1+q^{3n_f})}{(q;q)_m (q^3;q^3)_n (q^3;q^3)_{L-N_1}(q^3;q^3)_{n_f-2n-m }(q^3;q^3)_{n_1}(q^3;q^3)_{n_2}\dots(q^3;q^3)_{n_{f-1}}(q^3;q^3)_{2n_f}}\\ \nonumber
&\hspace{3cm}  =  \sum_{j=-L-1}^{L+1} q^{3(f+1)j^2-2j} (1+q^{3j}) {2L\brack L-j}_{q^3}.
\end{align}
\end{theorem}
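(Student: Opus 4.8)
The plan is to realize the seed identity \eqref{eq:Fin_Sum_of_capparelli's} as a Bailey-type expansion with base $q^3$ and $a=0$, and then iterate Theorem~\ref{thm:Bailey} exactly $f$ times. First I would read \eqref{eq:Fin_Sum_of_capparelli's} as the statement that the function
\[
F_0(M) := \sum_{m,n\geq 0} \frac{q^{2m^2+6mn+6n^2-2m-3n}(q^3;q^3)_M(1+q^{3M})}{(q;q)_m(q^3;q^3)_n(q^3;q^3)_{M-2n-m}}
\]
admits the representation $F_0(M) = \sum_{j=-\infty}^{\infty} \alpha_j\, {2M\brack M-j}_{q^3}$ with $\alpha_j = q^{3j^2-2j}(1+q^{3j})$. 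This is exactly the hypothesis of Theorem~\ref{thm:Bailey} with $a=0$ after the substitution $q\mapsto q^3$, so the entire argument is a mechanical iteration parallel to the derivation of Theorem~\ref{thm:hierarchy_Fin_Cap1_Binomial} from \eqref{eq:Fin_Cap1_Binomial}.

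Next I would apply Theorem~\ref{thm:Bailey} (with $a=0$, $q\mapsto q^3$) a total of $f$ times. A single application sends $\alpha_j\mapsto \alpha_j q^{3j^2}$ on the right and attaches the factor $q^{3r^2}(q^3;q^3)_{2L}/[(q^3;q^3)_{L-r}(q^3;q^3)_{2r}]$ together with a fresh summation variable on the left. After $f$ steps the right-hand coefficient becomes $\alpha_j q^{3fj^2} = q^{3(f+1)j^2-2j}(1+q^{3j})$, which produces the claimed series $\sum_{j=-L-1}^{L+1} q^{3(f+1)j^2-2j}(1+q^{3j}){2L\brack L-j}_{q^3}$; the widened summation range is harmless because ${2L\brack L-j}_{q^3}$ vanishes for $|j|>L$.

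On the left-hand side I would track the nested sum produced by the $f$ iterations. Writing $r_f,\dots,r_1$ for the successively introduced variables (with $r_1$ the argument of $F_0$), the key observation is the telescoping cancellation of the $(q^3;q^3)_{2r_k}$ factors between consecutive layers, which leaves precisely the denominator
\[
(q^3;q^3)_{L-r_f}(q^3;q^3)_{r_f-r_{f-1}}\cdots(q^3;q^3)_{r_2-r_1}(q^3;q^3)_{2r_1}
\]
and the exponent $3(r_f^2+\dots+r_1^2)$. Setting $r_f=N_1,\ r_{f-1}=N_2,\ \dots,\ r_1=N_f=n_f$, so that $r_{k+1}-r_k=n_{f-k}$ and $L-r_f=L-N_1$, converts this into exactly the product $(q^3;q^3)_{L-N_1}(q^3;q^3)_{n_1}\cdots(q^3;q^3)_{n_{f-1}}(q^3;q^3)_{2n_f}$ and the exponent $3(N_1^2+\dots+N_f^2)$ appearing in \eqref{eq:hierarchy_Fin_Sum_of_Capparellis}, with $F_0(n_f)$ supplying the remaining $m,n$ sum and the factor $(q^3;q^3)_{n_f}(1+q^{3n_f})$.

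The main obstacle is purely bookkeeping: confirming the telescoping of the $(q^3;q^3)_{2r_k}$ factors across all $f$ layers and verifying that the reindexation $r_k\mapsto N_{f-k+1}$ reproduces the stated product of Pochhammer symbols and the quadratic form $3\sum_i N_i^2$ verbatim. I would organize this as an induction on $f$, checking the base case $f=1$ by hand and then showing that one further application of Theorem~\ref{thm:Bailey} advances the nested denominator and the exponent by exactly one $N$-block. Everything else is a direct, routine iteration of Bailey's lemma.
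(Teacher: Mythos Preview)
Your proposal is correct and follows exactly the paper's approach: the paper simply states that one applies Theorem~\ref{thm:Bailey} with $a=0$ and $q\mapsto q^3$ iteratively to \eqref{eq:Fin_Sum_of_capparelli's}, and you have carefully unpacked precisely this iteration. Your bookkeeping of the telescoping $(q^3;q^3)_{2r_k}$ factors and the reindexation $r_k\mapsto N_{f-k+1}$ is accurate and fills in details the paper leaves implicit.
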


We can apply the Jacobi triple product identity \eqref{eq:JTP} to  \eqref{eq:hierarchy_Fin_Sum_of_Capparellis} twice after tending $L\rightarrow\infty$, and this yields:

\begin{theorem}\label{thm:hierarchy_Sum_of_Capparellis} Let $f\in\mathbb{N}$ and $N_i := n_i+n_{i+1}+\dots+n_f$ with $i=1,2\dots, f$, then
\begin{align}
\nonumber\sum_{m,n,n_1,n_2,\dots,n_f\geq 0 } &\frac{q^{2m^2+6mn+6n^2-2m-3n +3(N_1^2 + N_2^2+\dots+N_f^2)} (q^3;q^3)_{n_f}(1+q^{3n_f})}{(q;q)_m (q^3;q^3)_n (q^3;q^3)_{n_f-2n-m }(q^3;q^3)_{n_1}(q^3;q^3)_{n_2}\dots(q^3;q^3)_{n_{f-1}}(q^3;q^3)_{2n_f}}\\ \nonumber%\label{eq:hierarchy_Sum_of_Capparellis}
&\hspace{2cm}  = \frac{(q^{6(f+1)};q^{6(f+1)})_\infty}{(q^3;q^3)_\infty} \left( (-q^{3f+1}, -q^{3f+5};q^{6(f+1)})_\infty + (-q^{3f+2}, -q^{3f+4};q^{6(f+1)})_\infty \right) 
\end{align}
\end{theorem}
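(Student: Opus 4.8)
The plan is to start from Theorem~\ref{thm:hierarchy_Fin_Sum_of_Capparellis}, which already gives a finite polynomial identity with an explicit $L$-dependence, and simply let $L\rightarrow\infty$ while evaluating the resulting theta-type sum on the right. The left-hand side is straightforward: as $L\rightarrow\infty$, the factors $(q^3;q^3)_{2L}$ and $(q^3;q^3)_{L-N_1}$ combine so that, using the limits $\lim_{L\rightarrow\infty}(q^3;q^3)_{2L}/(q^3;q^3)_{L-N_1} = 1/(q^3;q^3)_\infty$-type behaviour recorded in Section~\ref{Sec:Background}, the $L$-dependent prefactor disappears and the summand collapses precisely to the left-hand side claimed in Theorem~\ref{thm:hierarchy_Sum_of_Capparellis}. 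This matching is purely bookkeeping, so I would state it and move on.

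The substance is the right-hand side. After the limit, the binomial $\binom{2L}{L-j}_{q^3}$ tends to $1/(q^3;q^3)_\infty$ uniformly in $j$, so the right-hand side of \eqref{eq:hierarchy_Fin_Sum_of_Capparellis} becomes
\[
\frac{1}{(q^3;q^3)_\infty}\sum_{j=-\infty}^{\infty} q^{3(f+1)j^2-2j}\bigl(1+q^{3j}\bigr).
\]
The key step is to split this bilateral sum into the two pieces coming from the factor $(1+q^{3j})$ and to recognize each as an instance of the Jacobi triple product identity \eqref{eq:JTP}. The first piece, $\sum_j q^{3(f+1)j^2-2j}$, is a theta series with quadratic coefficient $3(f+1)$ and linear coefficient $-2$; applying \eqref{eq:JTP} with base $q^{6(f+1)}$ and an appropriate $z$ chosen so that the linear term matches $-2j$ produces the product $(q^{6(f+1)};q^{6(f+1)})_\infty(-q^{3f+1},-q^{3f+5};q^{6(f+1)})_\infty$. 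The second piece, $\sum_j q^{3(f+1)j^2+3j-2j} = \sum_j q^{3(f+1)j^2+j}$, is another theta series with the same quadratic coefficient but linear coefficient $+1$; a second application of \eqref{eq:JTP} with the same base and a shifted $z$ yields $(q^{6(f+1)};q^{6(f+1)})_\infty(-q^{3f+2},-q^{3f+4};q^{6(f+1)})_\infty$. Adding and factoring out the common $(q^{6(f+1)};q^{6(f+1)})_\infty/(q^3;q^3)_\infty$ gives exactly the stated right-hand side.

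The only genuine care needed is in the two applications of \eqref{eq:JTP}: one must match each quadratic exponent $3(f+1)j^2$ to the template $(q^{6(f+1)})^{j^2}$ (so the base is $q^{6(f+1)}$), then solve for the $z$ in $z^j$ that reproduces the linear terms $q^{-2j}$ and $q^{j}$ respectively, and finally read off the two theta factors $(-zq^{6(f+1)},-q^{6(f+1)}/z;\allowbreak q^{12(f+1)})$—here one should double-check that the convention in \eqref{eq:JTP} (with modulus $q^2$ on the right when the left has $q^{j^2}$) is applied after the rescaling $q\mapsto q^{3(f+1)}$. This exponent-matching is the one spot where a sign or offset error would propagate, so I would verify the boundary residues $q^{3f+1},q^{3f+5}$ and $q^{3f+2},q^{3f+4}$ against a small case such as $f=0$, where the whole identity should reduce to the $M\rightarrow\infty$ form of \eqref{eq:Fin_Sum_of_capparelli's}. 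Beyond that sanity check, the argument is routine, and the main (minor) obstacle is simply keeping the two triple-product specializations bookkept correctly.
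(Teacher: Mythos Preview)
Your proposal is correct and follows exactly the route the paper takes: let $L\to\infty$ in Theorem~\ref{thm:hierarchy_Fin_Sum_of_Capparellis}, split the factor $(1+q^{3j})$, and apply the Jacobi triple product identity \eqref{eq:JTP} to each of the two resulting bilateral sums. One small slip to fix in your write-up: after the rescaling $q\mapsto q^{3(f+1)}$ in \eqref{eq:JTP} the modulus on the product side is $q^{6(f+1)}$, not $q^{12(f+1)}$; your stated final exponents $3f+1,\,3f+5$ and $3f+2,\,3f+4$ are nonetheless correct.
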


%\subsection{New Hierarchies Where everything is from this paper}
Applying Theorem~\ref{thm:Bailey} with $a=0$, $f$ times in iterative fashion to \eqref{eq:fin_Cap1} we derive:

\begin{theorem}\label{thm:hierarchy_Fin_Cap1} Let $L\in\mathbb{Z}_{\geq0}$, $f\in\mathbb{N}$ and $N_i := n_i+n_{i+1}+\dots+n_f$ with $i=1,2\dots, f$, then
\begin{align}
\label{eq:hierarchy_Fin_Cap1}
\sum_{m,n,n_1,n_2,\dots,n_f\geq 0 } &\frac{q^{2m^2+6mn+6n^2 +N_1^2 + N_2^2+\dots+N_f^2} (q;q)_{2L} (q;q)_{n_f}}{(q;q)_m (q^3;q^3)_n (q;q)_{L-N_1}(q;q)_{n_f-3n-2m }(q;q)_{n_1}(q;q)_{n_2}\dots(q;q)_{n_{f-1}}(q;q)_{2n_f}}\\ \nonumber
&\hspace{3cm}  =  \sum_{j=-L}^L \Jacobi{j+1}{3} q^{(f+1)j^2} {2L\brack L-j}_q.
\end{align}
\end{theorem}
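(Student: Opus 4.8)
The plan is to recognize \eqref{eq:fin_Cap1} as the base case of a Bailey chain and then to iterate Theorem~\ref{thm:Bailey} with $a=0$ exactly $f$ times. Writing $G(L)$ for the double sum on the left-hand side of \eqref{eq:fin_Cap1}, that identity asserts precisely that $G(L)=\sum_{j=-\infty}^{\infty}\alpha_j {2L\brack L-j}_q$ with $\alpha_j:=\Jacobi{j+1}{3}q^{j^2}$, where extending the summation range to all of $\mathbb{Z}$ is harmless because ${2L\brack L-j}_q$ vanishes for $|j|>L$. Thus $G$ plays the role of $F_0(L,q)$ in Theorem~\ref{thm:Bailey}, and the content of one application of that theorem is the substitution rule $\alpha_j\mapsto\alpha_j q^{j^2}$ on the coefficient side.

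First I would apply Theorem~\ref{thm:Bailey} once. On the right-hand side this replaces $\alpha_j$ by $\alpha_j q^{j^2}=\Jacobi{j+1}{3}q^{2j^2}$, while on the left-hand side it produces a new summation variable, say $r_1$, together with the weight $q^{r_1^2}(q;q)_{2L}/\big((q;q)_{L-r_1}(q;q)_{2r_1}\big)$ multiplying $G(r_1)$. Because the output has exactly the same shape as the input (as remarked after Theorem~\ref{thm:Bailey}), I can repeat this step; after $f$ iterations the coefficient side has become $\alpha_j q^{fj^2}=\Jacobi{j+1}{3}q^{(f+1)j^2}$, which is precisely the right-hand side of \eqref{eq:hierarchy_Fin_Cap1}. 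This settles the coefficient side with essentially no computation.

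The substantive bookkeeping is on the sum side. Unfolding the $f$ nested applications introduces variables $r_1,\dots,r_f$ and the product
\[
\frac{q^{r_1^2+\dots+r_f^2}(q;q)_{2L}}{(q;q)_{L-r_1}(q;q)_{2r_1}}\cdot\frac{(q;q)_{2r_1}}{(q;q)_{r_1-r_2}(q;q)_{2r_2}}\cdots\frac{(q;q)_{2r_{f-1}}}{(q;q)_{r_{f-1}-r_f}(q;q)_{2r_f}}\,G(r_f),
\]
in which every interior factor $(q;q)_{2r_i}$ cancels, leaving the weight $q^{r_1^2+\dots+r_f^2}(q;q)_{2L}/\big((q;q)_{L-r_1}(q;q)_{r_1-r_2}\cdots(q;q)_{r_{f-1}-r_f}(q;q)_{2r_f}\big)$ times $G(r_f)$. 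Substituting the defining double sum for $G(r_f)$ and then changing variables by $r_i=N_i=n_i+\dots+n_f$ (so that $r_i-r_{i+1}=n_i$, $L-r_1=L-N_1$, $r_f=n_f$, and $r_1^2+\dots+r_f^2=N_1^2+\dots+N_f^2$) turns this multi-sum into exactly the left-hand side of \eqref{eq:hierarchy_Fin_Cap1}. Here the constraint $r_1\geq r_2\geq\dots\geq r_f\geq 0$ forced by the vanishing of $1/(q;q)_{\text{neg}}$ corresponds bijectively to $n_1,\dots,n_f\geq 0$, so no summation range is lost.

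I expect the only real obstacle to be clerical: keeping the telescoping cancellation and the reindexing $r_i\mapsto N_i$ straight, and confirming that the innermost factors $(q;q)_{n_f}$, $(q;q)_{n_f-3n-2m}$, and $(q;q)_{2n_f}$ land in their stated positions. Since every step is a direct instance of Theorem~\ref{thm:Bailey} applied to the polynomial identity already established in Theorem~\ref{thm:New_Fin_Cap}, and $L$ is finite throughout, no convergence or analytic issues arise; the argument is complete once the base coefficients $\alpha_j$ are read off and the variable change is verified.
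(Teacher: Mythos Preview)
Your proposal is correct and follows exactly the route taken in the paper: the authors state only that Theorem~\ref{thm:hierarchy_Fin_Cap1} is obtained by applying Theorem~\ref{thm:Bailey} with $a=0$ iteratively $f$ times to \eqref{eq:fin_Cap1}, and your write-up supplies precisely the telescoping of the intermediate factors $(q;q)_{2r_i}$ and the reindexing $r_i=N_i$ that this one-line derivation leaves implicit. There is nothing to add or correct.
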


As $L\rightarrow\infty$, with the aid of quintuple product identity \eqref{eq:quintuple}, we get 

\begin{theorem}\label{thm:hierarchy_Cap1} Let $f\in\mathbb{N}$ and $N_i := n_i+n_{i+1}+\dots+n_f$ with $i=1,2\dots, f$, then
\begin{align}
\label{eq:hierarchy_Cap1}
\sum_{m,n,n_1,n_2,\dots,n_f\geq 0 } &\frac{q^{2m^2+6mn+6n^2 +N_1^2 + N_2^2+\dots+N_f^2} (q;q)_{n_f}}{(q;q)_m (q^3;q^3)_n (q;q)_{n_f-3n-2m }(q;q)_{n_1}(q;q)_{n_2}\dots(q;q)_{n_{f-1}}(q;q)_{2n_f}}\\ \nonumber
&\hspace{3cm}  =  \frac{1}{(q;q)_\infty}\sum_{j=-\infty}^\infty \Jacobi{j+1}{3} q^{(f+1)j^2}\\ \nonumber
&\hspace{3cm} = \frac{(q^{f+1};q^{f+1})_\infty}{(q;q)_\infty} (-q^{3(f+1)};q^{3(f+1)})_\infty (-q^{2(f+1)},-q^{4(f+1)};q^{6(f+1)})_\infty.
\end{align}
\end{theorem}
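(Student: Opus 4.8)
The plan is to derive Theorem~\ref{thm:hierarchy_Cap1} by letting $L\to\infty$ in the finite identity of Theorem~\ref{thm:hierarchy_Fin_Cap1}, which I take as already established. First I would treat the left-hand side of \eqref{eq:hierarchy_Fin_Cap1}. For fixed non-negative $m,n,n_1,\dots,n_f$ (and hence fixed $N_1$) the entire dependence on $L$ sits in the factor $(q;q)_{2L}/(q;q)_{L-N_1}=\prod_{k=L-N_1+1}^{2L}(1-q^k)$; as $L\to\infty$ both $(q;q)_{2L}$ and $(q;q)_{L-N_1}$ tend to $(q;q)_\infty$, so this ratio tends to $1$. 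Since $\bigl|\prod_{k=L-N_1+1}^{2L}(1-q^k)\bigr|\le(-|q|;|q|)_\infty$ uniformly in $L$ and in the summation indices, each summand is dominated by the corresponding term of a convergent series in $|q|$, and dominated convergence lets me pass the limit through the sum. The left-hand side of \eqref{eq:hierarchy_Fin_Cap1} therefore converges termwise to the left-hand side of \eqref{eq:hierarchy_Cap1}.

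On the right-hand side I would invoke the limit $\lim_{L\to\infty}{2L\brack L-j}_q=1/(q;q)_\infty$ from Section~\ref{Sec:Background}, valid for each fixed $j$, together with the rapid decay of $\Jacobi{j+1}{3}q^{(f+1)j^2}$ in $j$, to obtain the first asserted equality
\[\lim_{L\to\infty}\sum_{j=-L}^{L}\Jacobi{j+1}{3}q^{(f+1)j^2}{2L\brack L-j}_q=\frac{1}{(q;q)_\infty}\sum_{j=-\infty}^{\infty}\Jacobi{j+1}{3}q^{(f+1)j^2}.\]

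The substance of the theorem is the second equality, i.e.\ the product evaluation of $T:=\sum_{j=-\infty}^{\infty}\Jacobi{j+1}{3}q^{(f+1)j^2}$. Since $\Jacobi{j+1}{3}$ equals $+1,-1,0$ according as $j\equiv 0,1,2\pmod 3$, splitting the index as $j=3k$ and $j=3k+1$ gives $T=\sum_{k}q^{9(f+1)k^2}-\sum_{k}q^{(f+1)(3k+1)^2}$. I would then specialize the quintuple product identity \eqref{eq:quintuple} by $q\mapsto q^{6(f+1)}$ and $z=-q^{f+1}$: a short computation shows that the sign $(-1)^j$ combines with $z^{3j}$ (resp.\ $z^{3j+1}$) to reproduce exactly the two families above, so the left-hand side of \eqref{eq:quintuple} equals $T$, while its right-hand side becomes $(q^{6(f+1)},q^{f+1},q^{5(f+1)};q^{6(f+1)})_\infty(q^{4(f+1)},q^{8(f+1)};q^{12(f+1)})_\infty$.

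Finally I would reconcile this output with the product displayed in \eqref{eq:hierarchy_Cap1}. Writing $Q=q^{f+1}$ and converting the three ``minus'' factors via $(-x;y)_\infty=(x^2;y^2)_\infty/(x;y)_\infty$, then using the residue splittings $(Q;Q^3)_\infty=(Q,Q^4;Q^6)_\infty$ and $(Q^2;Q^3)_\infty=(Q^2,Q^5;Q^6)_\infty$, the claimed product $(Q;Q)_\infty(-Q^3;Q^3)_\infty(-Q^2,-Q^4;Q^6)_\infty$ collapses to $(Q,Q^5,Q^6;Q^6)_\infty(Q^4,Q^8;Q^{12})_\infty$, which is precisely the quintuple-product output. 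The main obstacle is this middle step: guessing the correct substitution $q\mapsto q^{6(f+1)}$, $z=-q^{f+1}$ so that the alternating Jacobi symbol is manufactured by $(-1)^jz^{3j}$, and then carrying out the elementary but fiddly Pochhammer bookkeeping that recasts the quintuple-product side into the symmetric form stated in the theorem.
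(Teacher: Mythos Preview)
Your proposal is correct and follows precisely the paper's own route: the authors state only that Theorem~\ref{thm:hierarchy_Cap1} is obtained from Theorem~\ref{thm:hierarchy_Fin_Cap1} by letting $L\to\infty$ ``with the aid of the quintuple product identity \eqref{eq:quintuple}.'' You have merely made explicit the substitution $q\mapsto q^{6(f+1)}$, $z=-q^{f+1}$ and the Pochhammer simplification that the paper leaves to the reader.
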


Note that the summands of \eqref{eq:hierarchy_Fin_Cap1} is not necessarily made out of terms with non-negative $q$-series coefficients. In fact, some $q$-series coefficients can be negative depending on the choice of $L$, due to the $(q;q)_{2L}$ term in the numerator in the summands. As $L\rightarrow\infty$, these sign changes disappear. The summands of \eqref{eq:hierarchy_Cap1} are all manifestly positive.

Now we move onto the new infinite hierarchy related to the \eqref{eq:fin_Cap2}.  Applying Theorem~\ref{thm:Bailey} with $a=0$, $f$ times in iterative fashion to \eqref{eq:fin_Cap2} we derive:

\begin{theorem}\label{thm:hierarchy_Fin_Cap2} Let $L\in\mathbb{Z}_{\geq0}$, $f\in\mathbb{N}$ and $N_i := n_i+n_{i+1}+\dots+n_f$ with $i=1,2\dots, f$, then
\begin{align}
\nonumber
\sum_{m,n,n_1,n_2,\dots,n_f\geq 0 } &\frac{q^{2m^2+6mn+6n^2+m+3n +N_1^2 + N_2^2+\dots+N_f^2} (q;q)_{2L} (q;q)_{n_f}}{(q;q)_m (q^3;q^3)_n (q;q)_{L-N_1}(q;q)_{n_f-3n-2m }(q;q)_{n_1}(q;q)_{n_2}\dots(q;q)_{n_{f-1}}(q;q)_{2n_f}}\\ \label{eq:hierarchy_Fin_Cap2}
&+q\sum_{m,n,n_1,n_2,\dots,n_f\geq 0 } \frac{q^{2m^2+6mn+6n^2+3m+6n +N_1^2 + N_2^2+\dots+N_f^2 } (q;q)_{2L} (q;q)_{n_f}}{(q;q)_m (q^3;q^3)_n (q;q)_{L-N_1}(q;q)_{n_f-3n-2m-1}(q;q)_{n_1}(q;q)_{n_2}\dots(q;q)_{n_{f-1}}(q;q)_{2n_f}}\\ \nonumber
&\hspace{-1cm}  =  \sum_{j=-L}^L \Jacobi{j+1}{3} q^{(f+1)j^2+j} {2L\brack L-j}_q.
\end{align}
\end{theorem}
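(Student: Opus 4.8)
The plan is to recognize \eqref{eq:fin_Cap2} as the seed of a Bailey chain and then iterate Theorem~\ref{thm:Bailey} with $a=0$. The right-hand side of \eqref{eq:fin_Cap2} can be written as $\sum_{j=-\infty}^\infty \alpha_j^{(0)}{2L\brack L-j}_q$ with $\alpha_j^{(0)} = \Jacobi{j+1}{3}q^{j^2+j}$, since the $q$-binomial coefficient vanishes for $|j|>L$. Thus \eqref{eq:fin_Cap2} asserts precisely that the full left-hand side, call it $F_0(L,q)$, satisfies the hypothesis of Theorem~\ref{thm:Bailey} with these $\alpha_j$, so the machine can be started.

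First I would iterate the right-hand side, which is the easy half. Each application of Theorem~\ref{thm:Bailey} with $a=0$ replaces $\alpha_j$ by $q^{j^2}\alpha_j$ while preserving the shape $\sum_j\alpha_j{2L\brack L-j}_q$. Hence after $f$ applications the coefficients become $\alpha_j^{(f)} = q^{fj^2}\alpha_j^{(0)} = \Jacobi{j+1}{3}q^{(f+1)j^2+j}$, which is exactly the claimed right-hand side $\sum_{j=-L}^L\Jacobi{j+1}{3}q^{(f+1)j^2+j}{2L\brack L-j}_q$.

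The substance is on the left-hand side, where I would track the nested Bailey kernels. Writing the seed as $F_0(L,q)=G(L)$ and setting $K(A,B)=\frac{q^{B^2}(q;q)_{2A}}{(q;q)_{A-B}(q;q)_{2B}}$, the $f$-fold iterate is $\sum_{r_1,\dots,r_f}\bigl(\prod_{k=1}^f K(r_{k+1},r_k)\bigr)G(r_1)$ with $r_{f+1}:=L$. Telescoping this product, the interior factors $(q;q)_{2r_k}$ for $k=2,\dots,f$ cancel in pairs, leaving $\frac{q^{r_1^2+\dots+r_f^2}(q;q)_{2L}}{(q;q)_{2r_1}\prod_{k=1}^f(q;q)_{r_{k+1}-r_k}}$. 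The decisive change of variables is $r_k = N_{f-k+1}$ (equivalently $r_1=n_f$ and $r_f=N_1$), under which $r_{k+1}-r_k = n_{f-k}$ for $k\le f-1$ and $r_{f+1}-r_f = L-N_1$, while $r_1^2+\dots+r_f^2=N_1^2+\dots+N_f^2$. Substituting $G(n_f)$, the two sums of \eqref{eq:fin_Cap2} with $L\mapsto n_f$, then reproduces, term by term, both sums in \eqref{eq:hierarchy_Fin_Cap2}; in particular the factor $(q;q)_{n_f}$ in each numerator comes from the $(q;q)_L$ of the seed and $(q;q)_{n_f-3n-2m}$ (respectively $(q;q)_{n_f-3n-2m-1}$) from its denominator.

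I expect the main obstacle to be purely organizational: verifying the telescoping of the kernel product and the identification of the summation variables with the $N_i$ so that every $(q;q)$-factor and the quadratic exponent land exactly as written, and confirming that the single $q$ multiplying the second seed sum survives the iteration untouched. A secondary point to check is that the support constraints $r_{k+1}\ge r_k$ imposed by the factors $(q;q)_{r_{k+1}-r_k}$ are automatically enforced by the convention $1/(q;q)_n=0$ for negative $n$, so that the unrestricted sums in the statement are legitimate and no boundary terms are dropped at any stage of the chain.
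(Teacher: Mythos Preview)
Your proposal is correct and follows exactly the paper's approach: the paper's entire proof is the single sentence ``Applying Theorem~\ref{thm:Bailey} with $a=0$, $f$ times in iterative fashion to \eqref{eq:fin_Cap2} we derive,'' and what you have written is a careful unpacking of precisely that iteration, including the telescoping of the Bailey kernels and the change of variables $r_k=N_{f-k+1}$ that converts the nested sums into the $N_i$ notation. There is no substantive difference in method.
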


As $L\rightarrow \infty$, with the help of quintuple product identity \eqref{eq:quintuple}, we get 

\begin{theorem}\label{thm:hierarchy_Cap2} Let $L\in\mathbb{Z}_{\geq0}$, $f\in\mathbb{N}$ and $N_i := n_i+n_{i+1}+\dots+n_f$ with $i=1,2\dots, f$, then
\begin{align}
\nonumber
\sum_{m,n,n_1,n_2,\dots,n_f\geq 0 } &\frac{q^{2m^2+6mn+6n^2+m+3n +N_1^2 + N_2^2+\dots+N_f^2}  (q;q)_{n_f}}{(q;q)_m (q^3;q^3)_n (q;q)_{n_f-3n-2m }(q;q)_{n_1}(q;q)_{n_2}\dots(q;q)_{n_{f-1}}(q;q)_{2n_f}}\\ \nonumber%\label{eq:hierarchy_Cap2}
&+q\sum_{m,n,n_1,n_2,\dots,n_f\geq 0 } \frac{q^{2m^2+6mn+6n^2+3m+6n +N_1^2 + N_2^2+\dots+N_f^2 }  (q;q)_{n_f}}{(q;q)_m (q^3;q^3)_n (q;q)_{n_f-3n-2m-1}(q;q)_{n_1}(q;q)_{n_2}\dots(q;q)_{n_{f-1}}(q;q)_{2n_f}}\\ \nonumber
&\hspace{-1.5cm}  =   \frac{(q^{f+2},q^{5f+4},q^{6f+6};q^{6f+6})_\infty(q^{4f+2},q^{8f+10};q^{12f+12})_\infty}{(q;q)_\infty} .
\end{align}
\end{theorem}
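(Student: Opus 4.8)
The plan is to obtain Theorem~\ref{thm:hierarchy_Cap2} as the $L\to\infty$ limit of the finite identity in Theorem~\ref{thm:hierarchy_Fin_Cap2}, in exact parallel with the companion hierarchy in Theorem~\ref{thm:hierarchy_Cap1}. Since Theorem~\ref{thm:hierarchy_Fin_Cap2} is already available, the only work is to compute both sides of \eqref{eq:hierarchy_Fin_Cap2} in the limit and to recognize the resulting bilateral theta series as an infinite product via the quintuple product identity \eqref{eq:quintuple}.

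First I would treat the left-hand side. For fixed summation indices $m,n,n_1,\dots,n_f$ the only $L$-dependence sits in the factor $(q;q)_{2L}/(q;q)_{L-N_1}$, and since both $(q;q)_{2L}$ and $(q;q)_{L-N_1}$ tend to $(q;q)_\infty$, this ratio tends to $1$. Hence each summand of \eqref{eq:hierarchy_Fin_Cap2} converges to the corresponding summand of the claim. The interchange of limit and summation is justified by working in $\mathbb{Z}[[q]]$: each fixed power of $q$ receives contributions from only finitely many tuples, so the convergence is coefficientwise (equivalently, uniform on compact subsets of $|q|<1$). This reproduces exactly the two multisums on the left of Theorem~\ref{thm:hierarchy_Cap2}.

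Next I would treat the right-hand side. Using the limit $\lim_{L\to\infty}{2L\brack L-j}_q = 1/(q;q)_\infty$ recalled in Section~\ref{Sec:Background}, the right-hand side of \eqref{eq:hierarchy_Fin_Cap2} tends to
\[
\frac{1}{(q;q)_\infty}\sum_{j=-\infty}^\infty \Jacobi{j+1}{3} q^{(f+1)j^2+j}.
\]
It remains to evaluate this series as a product. Writing $g:=f+1$ and recalling that $\Jacobi{j+1}{3}$ equals $+1,-1,0$ according as $j\equiv 0,1,2\pmod 3$, I would split according to the residue of $j$ mod $3$; setting $j=3k$ and $j=3k+1$ turns the series into $\sum_k q^{9gk^2+3k} - \sum_k q^{9gk^2+(6g+3)k+(g+1)}$. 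The crucial step is to recognize this as the left-hand side of \eqref{eq:quintuple} under the specialization $q\mapsto q^{6g}$, $z\mapsto -q^{g+1}$: expanding the factor $(1+zQ^j)$ produces precisely these two pieces, while the right-hand side of \eqref{eq:quintuple} collapses to
\[
(q^{6g};q^{6g})_\infty(q^{g+1},q^{5g-1};q^{6g})_\infty(q^{4g-2},q^{8g+2};q^{12g})_\infty.
\]
Substituting $g=f+1$ and dividing by $(q;q)_\infty$ yields the stated product.

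The main obstacle is the bookkeeping in the quintuple specialization: one must verify that the residue-class split matches the two halves of the quintuple sum with the correct signs. Here the apparent sign mismatch is resolved because $(-1)^{3j}=(-1)^j$, so the quintuple's $(-1)^j$ cancels against the $(-1)^j$ coming from $z^{3j}=(-q^{g+1})^{3j}$, leaving all-positive and all-negative pieces; one then checks that the five Pochhammer factors on the product side specialize to the exponents $g+1,\,5g-1,\,4g-2,\,8g+2$ (with bases $q^{6g}$ and $q^{12g}$) appearing in the claim. Everything else is a routine limiting argument.
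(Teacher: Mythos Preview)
Your proposal is correct and follows exactly the approach the paper takes: pass to the limit $L\to\infty$ in Theorem~\ref{thm:hierarchy_Fin_Cap2} and then evaluate the resulting bilateral series via the quintuple product identity \eqref{eq:quintuple}. The paper states this in a single sentence, whereas you have spelled out the specialization $q\mapsto q^{6(f+1)}$, $z\mapsto -q^{f+2}$ and the residue-class split in detail, but the substance is the same.
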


In particular, for $f=1$ we have

\begin{corollary}
\begin{align}
\nonumber
\sum_{m,n,n_1\geq 0 } \frac{q^{2m^2+6mn+6n^2+m+3n +n_1^2}  (q;q)_{n_1}}{(q;q)_m (q^3;q^3)_n (q;q)_{n_1-3n-2m }(q;q)_{2n_1}}&+q\sum_{m,n,n_1\geq 0 } \frac{q^{2m^2+6mn+6n^2+3m+6n +n_1^2 } (q;q)_{n_1}}{(q;q)_m (q^3;q^3)_n (q;q)_{n_1-3n-2m-1}(q;q)_{2n_1}}\\ \nonumber%\label{eq:corollary_hierarchy_Cap2_f_eq_1}
&\hspace{7cm}  =   \frac{(q^3;q^3)_\infty}{(q;q)_\infty}.
\end{align}
\end{corollary}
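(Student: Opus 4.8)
The plan is to obtain the corollary as the special case $f=1$ of Theorem~\ref{thm:hierarchy_Cap2}, after which the only real work is to collapse the resulting infinite product. First I would substitute $f=1$ throughout. Then $N_1 = n_f = n_1$, the telescoping product $(q;q)_{n_1}(q;q)_{n_2}\cdots(q;q)_{n_{f-1}}$ in each denominator is empty, and $N_1^2+\cdots+N_f^2 = n_1^2$ in the exponent. With these substitutions the two double sums on the left of Theorem~\ref{thm:hierarchy_Cap2} become verbatim the two sums in the statement of the corollary; this step is pure bookkeeping.

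The substantive step is the right-hand side. Setting $f=1$ in the product of Theorem~\ref{thm:hierarchy_Cap2} gives
\[
\frac{(q^3,q^9,q^{12};q^{12})_\infty\,(q^6,q^{18};q^{24})_\infty}{(q;q)_\infty},
\]
so it suffices to prove the product identity
\[
(q^3,q^9,q^{12};q^{12})_\infty\,(q^6,q^{18};q^{24})_\infty = (q^3;q^3)_\infty.
\]
I would establish this by sorting factors according to the residue of their exponent. Writing $Q=q^3$, the claim reads $(Q,Q^3,Q^4;Q^4)_\infty\,(Q^2,Q^6;Q^8)_\infty=(Q;Q)_\infty$. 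Splitting $(Q;Q)_\infty$ into residue classes modulo $4$ matches the factors with exponents $\equiv 1,3,0 \pmod 4$ against $(Q;Q^4)_\infty$, $(Q^3;Q^4)_\infty$, and $(Q^4;Q^4)_\infty$ directly, leaving only the class $\equiv 2 \pmod 4$. The remaining identity is $(Q^2;Q^8)_\infty\,(Q^6;Q^8)_\infty=(Q^2;Q^4)_\infty$, which is immediate because the exponents $2,10,18,\dots$ and $6,14,22,\dots$ interleave to give precisely $2,6,10,14,\dots$.

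The main obstacle --- if it deserves the name --- is simply recognizing the correct residue-class bookkeeping that fuses the three explicit base-$q^{12}$ factors together with the two base-$q^{24}$ factors into the single product $(q^3;q^3)_\infty$. Once that product identity is in hand, substituting it into the specialized right-hand side yields $(q^3;q^3)_\infty/(q;q)_\infty$, which is the claimed value and completes the proof.
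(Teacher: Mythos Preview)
Your proposal is correct and matches the paper's approach: the corollary is introduced there simply as the case $f=1$ of Theorem~\ref{thm:hierarchy_Cap2}. The only addition in your write-up is the explicit residue-class verification that $(q^3,q^9,q^{12};q^{12})_\infty(q^6,q^{18};q^{24})_\infty=(q^3;q^3)_\infty$, which the paper leaves to the reader.
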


Alternatively, instead of applying Theorem~\ref{thm:Bailey} to \eqref{eq:fin_Cap2} as is, one can replace the right-hand side of \eqref{eq:fin_Cap2} using \eqref{eq:fin_Cap2_RHS_qBin_alternative} and then apply Theorem~\ref{thm:Bailey} with $a=1$ in an iterative fashion. This yields an analogue of Theorem~\ref{thm:hierarchy_Fin_Cap2}.

\begin{theorem}\label{thm:hierarchy_Fin_Cap2_analogue} Let $L\in\mathbb{Z}_{\geq0}$, $f\in\mathbb{N}$ and $N_i := n_i+n_{i+1}+\dots+n_f$ with $i=1,2\dots, f$, then
\begin{align}
\nonumber
\sum_{m,n,n_1,n_2,\dots,n_f\geq 0 } &\frac{q^{2m^2+6mn+6n^2+m+3n +N_1^2 + N_2^2+\dots+N_f^2+N_1+N_2+\dots+N_f} (q;q)_{2L+1} (q;q)_{n_f}}{(q;q)_m (q^3;q^3)_n (q;q)_{L-N_1}(q;q)_{n_f-3n-2m }(q;q)_{n_1}(q;q)_{n_2}\dots(q;q)_{n_{f-1}}(q;q)_{2n_f+1}}\\ \label{eq:hierarchy_Fin_Cap2_analogue}
&\hspace{-.1cm}+q\sum_{m,n,n_1,n_2,\dots,n_f\geq 0 } \frac{q^{2m^2+6mn+6n^2+3m+6n +N_1^2 + N_2^2+\dots+N_f^2+N_1+N_2+\dots+N_f} (q;q)_{2L+1} (q;q)_{n_f}}{(q;q)_m (q^3;q^3)_n (q;q)_{L-N_1}(q;q)_{n_f-3n-2m-1}(q;q)_{n_1}(q;q)_{n_2}\dots(q;q)_{n_{f-1}}(q;q)_{2n_f+1}}\\ \nonumber
&\hspace{-1cm}  =  \sum_{j=-L}^L \Jacobi{j+1}{3} q^{(f+1)(j^2+j)} {2L+1\brack L-j}_q.
\end{align}
\end{theorem}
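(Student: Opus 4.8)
The plan is to recognize the left-hand side of \eqref{eq:fin_Cap2} as the seed of a Bailey chain for the case $a=1$ of Theorem~\ref{thm:Bailey} and then to iterate that lemma $f$ times. First I would rewrite the right-hand side of \eqref{eq:fin_Cap2} by means of the transformation \eqref{eq:fin_Cap2_RHS_qBin_alternative}, so that \eqref{eq:fin_Cap2} takes the form
\[
F(L,q):=\sum_{m,n\geq 0} \frac{q^{2m^2+6mn+6n^2 + m+ 3n} (q;q)_L}{(q;q)_{L-3n-2m}(q;q)_m(q^3;q^3)_n} + q\sum_{m,n\geq 0} \frac{q^{2m^2+6mn+6n^2 + 3m+ 6n}(q;q)_L}{(q;q)_{L-3n-2m-1}(q;q)_m(q^3;q^3)_n}=\sum_{j=-L-1}^{L+1} \Jacobi{j+1}{3} q^{j^2+j} {2L+1\brack L-j}_q.
\]
This displays $F(L,q)$ in exactly the shape $F_1(L,q)=\sum_j \alpha_j(q){2L+1\brack L-j}_q$ demanded by Theorem~\ref{thm:Bailey} with $a=1$, the initial Bailey coefficients being $\alpha_j(q)=\Jacobi{j+1}{3}q^{j^2+j}$ (taken for all $j$, with the sum supported on $-L-1\le j\le L$).

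Next I would iterate Theorem~\ref{thm:Bailey} with $a=1$ a total of $f$ times. Since the multiplier attached to each application is $q^{j^2+aj}=q^{j^2+j}$, after $f$ steps the Bailey coefficients become $\Jacobi{j+1}{3}q^{(f+1)(j^2+j)}$, which is precisely the summand on the right of \eqref{eq:hierarchy_Fin_Cap2_analogue}; the displayed range simply records the support of ${2L+1\brack L-j}_q$. On the left, set $N_0:=L$ and let $N_1\ge N_2\ge\dots\ge N_f\ge 0$ be the successive upper indices introduced by the iteration; the accumulated kernels then form the product
\[
\prod_{k=1}^{f}\frac{q^{N_k^2+N_k}\,(q;q)_{2N_{k-1}+1}}{(q;q)_{N_{k-1}-N_k}\,(q;q)_{2N_k+1}}.
\]

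The substantive step is the bookkeeping that folds this nested sum into the stated closed form. The factors $(q;q)_{2N_k+1}$ telescope for $k=1,\dots,f-1$, leaving $(q;q)_{2L+1}$ in the numerator and $(q;q)_{2N_f+1}$ in the denominator, while the exponents accumulate to $\sum_{k=1}^{f}(N_k^2+N_k)=N_1^2+\dots+N_f^2+N_1+\dots+N_f$; this is the origin of both the distinctive linear term $N_1+\dots+N_f$ and the unit shifts in the Pochhammer symbols that separate this identity from its $a=0$ companion, Theorem~\ref{thm:hierarchy_Fin_Cap2}. Carrying out the change of variables $n_k:=N_k-N_{k+1}$ for $k<f$ and $n_f:=N_f$ (so that $N_i=n_i+\dots+n_f$ agrees with the definition in the statement and $N_{f+1}=0$) turns the denominators $(q;q)_{N_{k-1}-N_k}$ into $(q;q)_{L-N_1},(q;q)_{n_1},\dots,(q;q)_{n_{f-1}}$ and $(q;q)_{2N_f+1}$ into $(q;q)_{2n_f+1}$; finally, folding in the seed $F(N_f,q)$ (which contributes the $m,n$ summation, the numerator $(q;q)_{n_f}$, and the denominators $(q;q)_{n_f-3n-2m}$ and $(q;q)_{n_f-3n-2m-1}$) reproduces \eqref{eq:hierarchy_Fin_Cap2_analogue} term by term. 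I expect the only delicate points to be this reindexing and the verification of the summation range on the right at the boundary index $j=-L-1$; since the seed identity (Theorem~\ref{thm:New_Fin_Cap}), the transformation \eqref{eq:fin_Cap2_RHS_qBin_alternative}, and Bailey's lemma are already established, no new analytic ingredient is required and the remainder of the argument is mechanical.
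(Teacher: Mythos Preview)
Your proposal is correct and follows essentially the same route as the paper: rewrite the right-hand side of \eqref{eq:fin_Cap2} via \eqref{eq:fin_Cap2_RHS_qBin_alternative} to put it in the $a=1$ Bailey form, then iterate Theorem~\ref{thm:Bailey} with $a=1$ a total of $f$ times. The paper states this in one sentence, whereas you have spelled out the telescoping of the factors $(q;q)_{2N_k+1}$ and the reindexing $n_k=N_k-N_{k+1}$ explicitly; that extra bookkeeping is accurate and helpful but not a different method.
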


Letting $L\rightarrow\infty$ in \eqref{eq:hierarchy_Fin_Cap2_analogue} yields the analogue of Theorem~\ref{thm:hierarchy_Cap2} with the help of the quintuple product identity \eqref{eq:quintuple}.

\begin{theorem}\label{thm:hierarchy_Cap2_analogue} Let $L\in\mathbb{Z}_{\geq0}$, $f\in\mathbb{N}$ and $N_i := n_i+n_{i+1}+\dots+n_f$ with $i=1,2\dots, f$, then
\begin{align}
\nonumber
\sum_{m,n,n_1,n_2,\dots,n_f\geq 0 } &\frac{q^{2m^2+6mn+6n^2+m+3n +N_1^2 + N_2^2+\dots+N_f^2+N_1+N_2+\dots+N_f}  (q;q)_{n_f}}{(q;q)_m (q^3;q^3)_n (q;q)_{n_f-3n-2m }(q;q)_{n_1}(q;q)_{n_2}\dots(q;q)_{n_{f-1}}(q;q)_{2n_f+1}}\\ \nonumber%\label{eq:hierarchy_Cap2_analogue}
&+q\sum_{m,n,n_1,n_2,\dots,n_f\geq 0 } \frac{q^{2m^2+6mn+6n^2+3m+6n +N_1^2 + N_2^2+\dots+N_f^2 +N_1+N_2+\dots+N_f}  (q;q)_{n_f}}{(q;q)_m (q^3;q^3)_n (q;q)_{n_f-3n-2m-1}(q;q)_{n_1}(q;q)_{n_2}\dots(q;q)_{n_{f-1}}(q;q)_{2n_f+1}}\\ \nonumber
&\hspace{-1.5cm}  =   \frac{(q^{2(f+1)};q^{2(f+1)})_\infty}{(q;q)_\infty} 
(q^{2(f+1)},q^{10(f+1)};q^{12(f+1)})_\infty.
\end{align}
\end{theorem}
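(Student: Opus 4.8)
The plan is to derive Theorem~\ref{thm:hierarchy_Cap2_analogue} as the $L\to\infty$ limit of the finite identity \eqref{eq:hierarchy_Fin_Cap2_analogue}, handling the two sides separately. I work throughout in the ring of formal power series in $q$, so that the passage $L\to\infty$ amounts to checking that the coefficient of each fixed power of $q$ stabilises; the termwise limits taken below are legitimate because, for each monomial $q^n$, only finitely many tuples $(m,n,n_1,\dots,n_f)$ contribute.

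First I treat the left-hand side. For fixed $m,n,n_1,\dots,n_f$ the only dependence on $L$ in each summand of \eqref{eq:hierarchy_Fin_Cap2_analogue} is through the factor $(q;q)_{2L+1}/(q;q)_{L-N_1}$ together with the constraint $L-N_1\ge 0$. Since $(q;q)_{2L+1}\to(q;q)_\infty$ and $(q;q)_{L-N_1}\to(q;q)_\infty$, this ratio tends to $1$, while the inequality $L\ge N_1$ becomes vacuous. Consequently each of the two finite sums converges termwise to the matching sum on the left-hand side of Theorem~\ref{thm:hierarchy_Cap2_analogue}, with $N_1=n_1+\dots+n_f$ now an unconstrained summation variable.

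For the right-hand side I invoke the limit $\lim_{L\to\infty}{2L+1\brack L-j}_q=1/(q;q)_\infty$ recorded in Section~\ref{Sec:Background}, which collapses the right-hand side of \eqref{eq:hierarchy_Fin_Cap2_analogue} to
\[
\frac{1}{(q;q)_\infty}\sum_{j=-\infty}^{\infty}\Jacobi{j+1}{3}q^{(f+1)(j^2+j)}.
\]
The heart of the argument is the evaluation of this theta sum. Putting $Q:=q^{f+1}$, the symbol $\Jacobi{j+1}{3}$ vanishes when $j\equiv 2\pmod 3$, equals $+1$ when $j\equiv 0$, and equals $-1$ when $j\equiv 1$; splitting $j=3k$ and $j=3k+1$ therefore leaves the two families $Q^{9k^2+3k}=Q^{3k(3k+1)}$ (sign $+$) and $Q^{9k^2+9k+2}=Q^{(3k+1)(3k+2)}$ (sign $-$). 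This is precisely the shape of the left-hand side of the quintuple product identity \eqref{eq:quintuple}, and I intend to specialise \eqref{eq:quintuple} by the substitution $q\mapsto Q^6$, $z\mapsto -Q^2$. Under this choice $(-1)^jz^{3j}=Q^{6j}$ absorbs the alternating sign, so the leading summand $(-1)^jq^{j(3j-1)/2}z^{3j}$ of \eqref{eq:quintuple} becomes $Q^{9j^2+3j}$, while the companion summand carries the extra factor $zq^{j}=-Q^{6j+2}$ and hence contributes $-Q^{9j^2+9j+2}$; thus the left-hand side of \eqref{eq:quintuple} reproduces the theta sum verbatim.

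Reading off the right-hand side of \eqref{eq:quintuple} under $q\mapsto Q^6$, $z\mapsto -Q^2$ gives
\[
\sum_{j=-\infty}^{\infty}\Jacobi{j+1}{3}q^{(f+1)(j^2+j)}=(Q^6;Q^6)_\infty(Q^2;Q^6)_\infty(Q^4;Q^6)_\infty(Q^{10};Q^{12})_\infty(Q^2;Q^{12})_\infty.
\]
Writing $t:=Q^2=q^{2(f+1)}$, the three factors of base $Q^6=t^3$ collapse via the elementary residue splitting $(t;t^3)_\infty(t^2;t^3)_\infty(t^3;t^3)_\infty=(t;t)_\infty$, leaving $(t;t)_\infty(t;t^6)_\infty(t^5;t^6)_\infty$. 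Dividing by $(q;q)_\infty$ reproduces exactly the product displayed in Theorem~\ref{thm:hierarchy_Cap2_analogue}. I anticipate that the only genuinely delicate step is the sign bookkeeping in the quintuple specialisation --- in particular fixing $z=-Q^2$ so that the $(-1)^j$ in \eqref{eq:quintuple} is absorbed; once that is settled, the remaining manipulations are routine infinite-product rearrangements.
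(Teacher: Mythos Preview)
Your proof is correct and follows exactly the route indicated in the paper: send $L\to\infty$ in \eqref{eq:hierarchy_Fin_Cap2_analogue} and evaluate the resulting bilateral sum via the quintuple product identity \eqref{eq:quintuple}. The specialisation $q\mapsto Q^6$, $z\mapsto -Q^2$ with $Q=q^{f+1}$ and the subsequent collapse $(Q^2;Q^6)_\infty(Q^4;Q^6)_\infty(Q^6;Q^6)_\infty=(Q^2;Q^2)_\infty$ are carried out correctly; your derivation simply makes explicit the details the paper leaves to the reader.
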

%
%Similarly, we can apply Theorem~\ref{thm:Bailey} to \eqref{eq:fin_Cap1_qBin_alternative} with $a=0$ and this yields the following companion to Theorem~\ref{thm:hierarchy_Fin_Cap1}.
%
%\begin{theorem}\label{thm:hierarchy_Fin_Cap1_alternate} Let $L\in\mathbb{Z}_{\geq0}$, $f\in\mathbb{N}$ and $N_i := n_i+n_{i+1}+\dots+n_f$ with $i=1,2\dots, f$, then
%\begin{align}
%\label{eq:hierarchy_Fin_Cap1_alternate}
%\sum_{m,n,n_1,n_2,\dots,n_f\geq 0 } &\frac{q^{2m^2+6mn+6n^2 +N_1^2 + N_2^2+\dots+N_f^2+N_f} (q;q)_{2L} (q;q)_{n_f}}{(q;q)_m (q^3;q^3)_n (q;q)_{L-N_1}(q;q)_{n_f-3n-2m }(q;q)_{n_1}(q;q)_{n_2}\dots(q;q)_{n_{f-1}}(q;q)_{2n_f}}\\ \nonumber
%&\hspace{3cm}  =  \sum_{j=-L}^L \Jacobi{j+1}{3} q^{(f+1)j^2-j} {2L\brack L-j}_q.
%\end{align}
%\end{theorem}
%
%Using the quintuple product identity \eqref{eq:quintuple}, Theorem~\ref{thm:hierarchy_Fin_Cap1_alternate} gives the follosing theorem as $L\rightarrow\infty$.
%
%\begin{theorem} Let $f\in\mathbb{N}$ and $N_i := n_i+n_{i+1}+\dots+n_f$ with $i=1,2\dots, f$, then
%\begin{align}
%\label{eq:hierarchy_Cap1_alternate}
%\sum_{m,n,n_1,n_2,\dots,n_f\geq 0 } &\frac{q^{2m^2+6mn+6n^2 +N_1^2 + N_2^2+\dots+N_f^2+N_f} (q;q)_{n_f}}{(q;q)_m (q^3;q^3)_n (q;q)_{n_f-3n-2m }(q;q)_{n_1}(q;q)_{n_2}\dots(q;q)_{n_{f-1}}(q;q)_{2n_f}}\\ \nonumber
%&\hspace{3cm} = \frac{(q^f,q^{5f+6},q^{6(f+1)};q^{6(f+1)})_\infty(q^{4f+6},q^{8f+6};q^{12(f+1)})_\infty}{(q;q)_\infty}.
%\end{align}
%\end{theorem}
%
%Finally, we would like to present a family of infinite hierarchies which also container Theorem~\ref{thm:hierarchy_Fin_Cap1_alternate} as its initial case (for $s=1$).

\begin{theorem}\label{thm:double_fin_hierarchy}  Let $f\in\mathbb{N}$ and $N_i := n_i+n_{i+1}+\dots+n_f$, with $i,s=1,2\dots, f$, where $N_{f+1}:=0$, then
\begin{align}
\label{eq:double_fin_hierarchy}
\sum_{m,n,n_1,n_2,\dots,n_f\geq 0 } &\frac{q^{2m^2+6mn+6n^2 +N_1^2 + N_2^2+\dots+N_f^2+N_{f-s+1} +\dots + N_f } (q;q)_{2L} (q;q)_{n_f}}{(q;q)_m (q^3;q^3)_n (q;q)_{L-N_1}(q;q)_{n_f-3n-2m }(q;q)_{n_1}(q;q)_{n_2}\dots(q;q)_{n_{f-1}}(q;q)_{2n_f}}\\ \nonumber
&\hspace{3cm}  =  \sum_{j=-L}^L \Jacobi{j+1}{3} q^{(f+1)j^2-sj} {2L\brack L-j}_q.
\end{align}
\end{theorem}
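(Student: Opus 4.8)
The plan is to derive \eqref{eq:double_fin_hierarchy} from the base identity \eqref{eq:fin_Cap1} by iterating the $a=0$ case of Bailey's lemma (Theorem~\ref{thm:Bailey}), exactly as in the proof of Theorem~\ref{thm:hierarchy_Fin_Cap1}, except that the iteration is interleaved with the transformation of Theorem~\ref{thm:k_transform} in order to manufacture the linear term $-sj$ on the right and, at the same time, the extra factor $q^{N_{f-s+1}+\dots+N_f}$ on the left. Throughout I abbreviate
\[
A_{k,t}(L) := \sum_{j=-L}^{L}\Jacobi{j+1}{3}\, q^{kj^2-tj}{2L\brack L-j}_q ,
\]
so that \eqref{eq:fin_Cap1} states that the Capparelli double sum equals $A_{1,0}(L)$, the right-hand side of \eqref{eq:double_fin_hierarchy} is $A_{f+1,s}(L)$, and Theorem~\ref{thm:k_transform} reads $A_{k,k}(L)=q^{L}A_{k,k-1}(L)$.

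Two elementary moves drive the argument. First, Bailey's lemma with $a=0$ sends $A_{k,t}\mapsto A_{k+1,t}$ on the right, while on the left it replaces the current summand $\Psi(L)$ by $\sum_{r\ge 0}\frac{q^{r^2}(q;q)_{2L}}{(q;q)_{L-r}(q;q)_{2r}}\Psi(r)$, thereby introducing one new summation variable together with a factor $q^{r^2}$. Second, the $k$-transform multiplies a given $A_{k,k-1}$ by $q^{L}$ to produce $A_{k,k}$; when a Bailey step is applied immediately afterwards, this $q^{L}$ is evaluated at the fresh variable $r$ and so deposits exactly a factor $q^{r}$. The point is that these two moves chain together while preserving the diagonal relation $t=k-1$ that Theorem~\ref{thm:k_transform} requires: starting from $A_{1,0}$, one round consisting of a $k$-transform followed by a Bailey step realizes $A_{k,k-1}\mapsto q^{L}A_{k,k-1}=A_{k,k}\mapsto A_{k+1,k}$, raising both indices by one and attaching one factor $q^{r}$ to the new variable, so that the diagonal relation is restored for the next round.

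Concretely, I would perform $s$ such combined rounds and then $f-s$ plain Bailey steps. Writing $\rho_k$ for the variable created at the $k$-th Bailey step (so the first, innermost, step creates $\rho_1$), the $s$ combined rounds produce the right-hand side $A_{s+1,s}(L)$ together with a factor $q^{\rho_1+\dots+\rho_s}$, and the remaining $f-s$ plain steps carry this up to $A_{f+1,s}(L)$, introducing $\rho_{s+1},\dots,\rho_f$. Reading off the resulting left-hand side precisely as in the derivation of Theorem~\ref{thm:hierarchy_Fin_Cap1}, with the same telescoping of the $(q;q)_{2r}$ factors, and then making the identification $\rho_k=N_{f+1-k}$ (so that $\rho_1=N_f=n_f$, $\rho_f=N_1$, and $\rho_{i+1}-\rho_i=n_{f-i}$), one finds $\sum_k\rho_k^2=\sum_i N_i^2$ and $\rho_1+\dots+\rho_s=N_{f-s+1}+\dots+N_f$; this reproduces the summand of \eqref{eq:double_fin_hierarchy} term by term, including the quadratic form $2m^2+6mn+6n^2$ and the $q$-factorial pattern coming from the base sum \eqref{eq:fin_Cap1}.

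The routine part is the bookkeeping of the $(q;q)$-products in the denominators, which telescopes in the same manner as in Theorem~\ref{thm:hierarchy_Fin_Cap1}, together with the verification of the small initial cases. The part that needs care --- and the only genuinely new ingredient relative to Theorem~\ref{thm:hierarchy_Fin_Cap1} --- is checking that the diagonal condition $t=k-1$ persists across all $s$ combined rounds, so that Theorem~\ref{thm:k_transform} is legitimately applicable at each one, and that the $q^{L}$ it introduces is transported by the ensuing Bailey step into a factor $q^{\rho_k}$ sitting on the correct variable. Once this is confirmed, $s=0$ degenerates to Theorem~\ref{thm:hierarchy_Fin_Cap1} (no $k$-transforms) and $s=f$ is the fully combined extreme (no plain steps), with the intermediate values of $s$ interpolating between them.
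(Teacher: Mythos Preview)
Your proposal is correct and follows essentially the same approach as the paper's own proof. The paper starts from Corollary~\ref{cor:fin_Cap2_analogue} (which is precisely your first $k$-transform applied to \eqref{eq:fin_Cap1}), then alternates a Bailey step with a ``multiply by $q^{L}$ and apply Theorem~\ref{thm:k_transform}'' step to reach $A_{s+1,s}$, and finishes with $f-s$ plain Bailey steps; your grouping ``$k$-transform followed by Bailey'' traces exactly the same sequence of operations, and your identification $\rho_k=N_{f+1-k}$ is the relabeling the paper performs implicitly when passing from \eqref{eq:s_Bailey_applications} to \eqref{eq:double_fin_hierarchy}.
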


\begin{proof} We first apply Theorem~\ref{thm:Bailey} with $a=0$ to \eqref{eq:fin_Cap1_qBin_alternative}. This yields \begin{equation}
\label{eq:first_Bailey_application} 
\sum_{m,n,n_1\geq 0} \frac{q^{2m^2+  6 m n + 6n^2 + n_1^2+n_1} (q;q)_{n_1} (q;q)_{2L}}{(q;q)_m(q^3;q^3)_n(q;q)_{n_1-3n-2m}(q;q)_{L-n_1}(q;q)_{2n_1}} = \sum_{j=-L}^L \Jacobi{j+1}{3} q^{2j^2-j} {2L\brack L-j}_q.
\end{equation} Next, we multiply both sides of \eqref{eq:first_Bailey_application} with $q^L$ and apply \eqref{eq:k_transform} to the right-hand side of \eqref{eq:first_Bailey_application} with $k=2$ to get 
\begin{equation}
\label{eq:after_k_transform} 
\sum_{m,n,n_1\geq 0} \frac{q^{L+2m^2+  6 m n + 6n^2 + n_1^2+n_1} (q;q)_{n_1} (q;q)_{2L}}{(q;q)_m(q^3;q^3)_n(q;q)_{n_1-3n-2m}(q;q)_{L-n_1}(q;q)_{2n_1}} = \sum_{j=-L}^L \Jacobi{j+1}{3} q^{2j^2-2j} {2L\brack L-j}_q.
\end{equation} The outcome equation \eqref{eq:after_k_transform} is in a form suitable for the application of Theorem~\ref{thm:Bailey} with $a=0$. We can once again apply Bailey's lemma with $a=0$, directly follow this step with multiplying both sides with $q^L$, and rewriting the right-hand side of the outcome expression with \eqref{eq:k_transform} with $k=3$ as in the previous case. This yields the next step in this succession:
\begin{equation*}
\sum_{m,n,n_1,n_2\geq 0} \frac{q^{L+2m^2+  6 m n + 6n^2 + (n_1+n_2)^2+n_2^2+(n_1+n_2)+n_2} (q;q)_{n_2} (q;q)_{2L}}{(q;q)_m(q^3;q^3)_n(q;q)_{n_2-3n-2m}(q;q)_{L-(n_1+n_2)}(q;q)_{n_1}(q;q)_{2n_2}} = \sum_{j=-L}^L \Jacobi{j+1}{3} q^{3j^2-3j} {2L\brack L-j}_q.
\end{equation*} 
Proceeding in this fashion, for any $s=1,2,\dots$ we arrive at
\begin{align}
\nonumber\sum_{m,n,n_1,\dots,n_s\geq 0} &\frac{q^{2m^2+  6 m n + 6n^2 + N_1^2+N_2^2+\dots+N_s^2+N_1+N_2+\dots+N_s} (q;q)_{n_s} (q;q)_{2L}}{(q;q)_m(q^3;q^3)_n(q;q)_{n_s-3n-2m}(q;q)_{L-N_1}(q;q)_{n_1}\dots(q;q)_{n_{s-1}}(q;q)_{2n_s}}\\\label{eq:s_Bailey_applications} &\hspace{7cm}= \sum_{j=-L}^L \Jacobi{j+1}{3} q^{(s+1)j^2-sj} {2L\brack L-j}_q,
\end{align} where $N_i:=n_i+ ….+n_s$ for $i=1,2,..,s$.
Finally, applying Theorem~\ref{thm:Bailey} with $a=0$ directly to \eqref{eq:s_Bailey_applications}, without going through the multiplication with the factor $q^L$,  $f-s$ times (for any $f\geq s$) yields~\eqref{eq:double_fin_hierarchy}.
\end{proof}

We would like to remark that Theorem~\ref{thm:hierarchy_Fin_Cap1} can be seen as the $s=0$ case of Theorem~\ref{thm:double_fin_hierarchy}. Also note that if one apply Theorem~\ref{thm:Bailey} with $a=0$ iteratively to \eqref{eq:fin_Cap1_qBin_alternative}, then one gets the $s=1$ case Theorem~\ref{thm:double_fin_hierarchy}.

Tending $L\rightarrow\infty$ in \eqref{eq:double_fin_hierarchy} and using \eqref{eq:quintuple} we get Theorem~\ref{thm:hierarchy_Cap1_alternate} for $s\in \mathbb{N}$. The $s=0$ case of Theorem~\ref{thm:hierarchy_Cap1_alternate} is Theorem~\ref{thm:hierarchy_Cap1} and it is already proven.

\section{Dual Identities}\label{Sec:dual}

Replacing $q\mapsto 1/q$ in \eqref{eq:fin_Cap1} and \eqref{eq:fin_Cap2}, using \eqref{eq:qBinom_1_over_q} followed by multiplying both sides with $q^{L^2}$ and $q^{L^2+L}$, respectively, yields the following theorem.

\begin{theorem}\label{thm:dual_fin_Cap1}
\begin{align}
\label{eq:dua_fin_Cap1} \sum_{\substack{n,m\geq0 \\ L\equiv n+2m\text{ (mod }3)}} \frac{(-1)^m q^{\frac{m(m-1)}{2}+Ln} (q;q)_L}{(q;q)_m (q;q)_n (q^3;q^3)_{(L-n-2m)/3}} &= \sum_{j = -\infty}^{\infty} \Jacobi{j+1}{3} {2L\brack L-j}_q,\\
\label{eq:dua_fin_Cap2} \sum_{\substack{n,m\geq0 \\ L\equiv n+2m\text{ (mod }3)}} \frac{(-1)^m q^{\frac{m(m+1)}{2}+(L+1)n} (q;q)_L}{(q;q)_m (q;q)_n (q^3;q^3)_{(L-n-2m)/3}} &-\sum_{\substack{n,m\geq0 \\ L\equiv n+2m+1\text{ (mod }3)}} \frac{(-1)^m q^{\frac{m(m+1)}{2}+(L+1)n} (q;q)_L}{(q;q)_m (q;q)_n (q^3;q^3)_{(L-n-2m-1)/3}}\\ \nonumber
&\hspace{1cm}= \sum_{j = -\infty}^{\infty} \Jacobi{j+1}{3} q^{L-j} {2L\brack L-j}_q,
\end{align} where $\Jacobi{\cdot}{\cdot}$ is the Jacobi symbol.
\end{theorem}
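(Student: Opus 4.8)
The plan is to derive \eqref{eq:dua_fin_Cap1} and \eqref{eq:dua_fin_Cap2} termwise from the already-established identities \eqref{eq:fin_Cap1} and \eqref{eq:fin_Cap2} of Theorem~\ref{thm:New_Fin_Cap} by the substitution $q\mapsto 1/q$, so that no new recurrence analysis is needed. The two elementary ingredients are the binomial inversion \eqref{eq:qBinom_1_over_q}, which handles the right-hand sides, and the Pochhammer reflection $(q^{-1};q^{-1})_n=(-1)^n q^{-n(n+1)/2}(q;q)_n$ (obtained by extracting $-q^{-k}$ from each factor $1-q^{-k}$), together with its base-$q^3$ analogue $(q^{-3};q^{-3})_n=(-1)^n q^{-3n(n+1)/2}(q^3;q^3)_n$, which handle the summands on the left.

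First I would replace $q$ by $1/q$ in \eqref{eq:fin_Cap1} and rewrite every Pochhammer symbol via the reflection formula. The four sign factors coming from $(q;q)_L$, $(q;q)_{L-3n-2m}$, $(q;q)_m$ and $(q^3;q^3)_n$ combine to $(-1)^{L-(L-3n-2m)-m-n}=(-1)^{2n+m}=(-1)^m$, producing the prefactor in \eqref{eq:dua_fin_Cap1}. Collecting the half-integer exponents from the reflections against the $-(2m^2+6mn+6n^2)$ from the summand and then multiplying through by $q^{L^2}$ leaves an exponent that still mixes $L$, $m$ and the old summation index $n$. The decisive step is the reindexing $n\mapsto L-3n-2m$: under it $(q;q)_{L-3n-2m}$ becomes the new $(q;q)_n$ while $(q^3;q^3)_n$ becomes $(q^3;q^3)_{(L-n-2m)/3}$, which forces the congruence $L\equiv n+2m\imod 3$ in order to keep the index a non-negative integer, and the exponent collapses to $\frac{m(m-1)}{2}+Ln$. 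On the right-hand side, \eqref{eq:qBinom_1_over_q} applied to ${2L\brack L-j}_q$ (whose two parts are $L-j$ and $L+j$) contributes $q^{-(L^2-j^2)}$, the $q^{j^2}$ flips to $q^{-j^2}$, and the two cancel so that the $q^{L^2}$ multiplier leaves exactly $\Jacobi{j+1}{3}{2L\brack L-j}_q$; since ${2L\brack L-j}_q=0$ for $|j|>L$, the range $-L\le j\le L$ may be opened up to all of $\mathbb{Z}$, matching \eqref{eq:dua_fin_Cap1}.

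For \eqref{eq:dua_fin_Cap2} the same procedure is applied to the two sums of \eqref{eq:fin_Cap2} separately. The first sum behaves exactly as above, with the extra $q^{m+3n}$ in its summand and the replacement of the $q^{L^2}$ normalization by $q^{L^2+L}$ shifting the final exponent to $\frac{m(m+1)}{2}+(L+1)n$. For the second sum the relevant Pochhammer is $(q;q)_{L-3n-2m-1}$, so the sign becomes $(-1)^{m+1}$ and the leading factor $q$ turns into $q^{-1}$; together these account for the overall minus sign, the reindexing $n\mapsto L-3n-2m-1$ produces the shifted congruence $L\equiv n+2m+1\imod 3$ and the argument $(L-n-2m-1)/3$ in the last Pochhammer, while the exponent again simplifies to $\frac{m(m+1)}{2}+(L+1)n$. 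The right-hand side is treated as before, with $q^{j(j+1)}$ and $q^{-(L^2-j^2)}$ combining so that the $q^{L^2+L}$ multiplier leaves $q^{L-j}$.

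I expect the only real work to be the exponent bookkeeping: one must verify that, after multiplying by $q^{L^2}$ (resp.\ $q^{L^2+L}$), all terms involving $L^2$, $Lm$, $n^2$, $mn$ and $Ln$ in the old variables cancel cleanly under the reindexing. This is routine but error-prone, and I would organize it by substituting $3n_{\mathrm{old}}=L-2m-n_{\mathrm{new}}$ (resp.\ $L-2m-1-n_{\mathrm{new}}$) into the fully expanded exponent and checking the cancellations coefficient by coefficient. The conceptual content is minimal once this algebra is arranged, since the congruence conditions are exactly the integrality constraints forced by the change of summation variable.
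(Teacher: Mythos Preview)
Your approach is correct and is exactly the one the paper uses: the paper's entire proof is the sentence ``Replacing $q\mapsto 1/q$ in \eqref{eq:fin_Cap1} and \eqref{eq:fin_Cap2}, using \eqref{eq:qBinom_1_over_q} followed by multiplying both sides with $q^{L^2}$ and $q^{L^2+L}$, respectively, yields the following theorem.'' You have simply spelled out the details the paper leaves implicit, including the reindexing $n\mapsto L-3n-2m$ (resp.\ $L-3n-2m-1$) that produces the congruence conditions and the new Pochhammer arguments; one minor quibble is that the overall minus sign on the second sum of \eqref{eq:dua_fin_Cap2} comes entirely from the parity count $(-1)^{m+1}$, while the $q^{-1}$ from the prefactor is absorbed into the exponent bookkeeping rather than into the sign.
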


The asymptotic behaviors of \eqref{eq:dua_fin_Cap1} and \eqref{eq:dua_fin_Cap2} as $L\rightarrow\infty$ should also be considered. %It is easy to see that right-hand side of \eqref{eq:dua_fin_Cap1} diverges when we tend $L$ to infinity. Therefore, we will only focus on the limits related to \eqref{eq:dua_fin_Cap2} from now on.
It is easier to understand the asymptotic behavior of these identities through \eqref{eq:dua_fin_Cap2}, so we will only be focusing on that.

All the summands of the left-hand side series of \eqref{eq:dua_fin_Cap2} with any non-zero $n$-values vanishes as $L\rightarrow \infty$ for $|q|<1$. This reduces the double sums on the left-hand sides of \eqref{eq:dua_fin_Cap2} to single sums for the limit discussions. 

Letting $L\mapsto 3L$, we see that the left-hand side summation conditions '$L\equiv n+2m\text{ (mod }3)$' and '$L\equiv n+2m+1\text{ (mod }3)$' imply that '$m\equiv 0$ (mod 3)' and '$m\equiv 1$ (mod 3)', respectively. On the right-hand side of \eqref{eq:dua_fin_Cap2}, we do the simple $k=3L-j$ substitution before taking any limits. We then see that $L\rightarrow\infty$ implies

\begin{equation}\label{eq:limit_3L_of_fin_cap2_raw}
\frac{(q;q)_\infty}{(q^3;q^3)_\infty} \left( \sum_{\substack{m\geq 0 \\ m\equiv0\text{ (mod }3)} }(-1)^m \frac{q^{\frac{m(m+1)}{2}}}{(q;q)_m} - \sum_{\substack{m\geq 0 \\ m\equiv1\text{ (mod }3)}} (-1)^m \frac{q^{\frac{m(m+1)}{2}}}{(q;q)_m} \right) = \sum_{k\geq 0} \Jacobi{1-k}{3} \frac{q^k}{(q;q)_k}.
\end{equation}

The identity \eqref{eq:limit_3L_of_fin_cap2_raw} can be simplified by writing the left-hand side sums explicitly and then combining the terms. For the right-hnd side we can use the simple observation about the Jacobi symbols that $\Jacobi{1-k}{3} = -\Jacobi{k+2}{3}$. Then we get,

\begin{corollary}\label{cor:limit_3L_fin_Cap2}
\begin{equation}\label{eq:limit_3L_fin_Cap2}
\frac{(q;q)_\infty}{(q^3;q^3)_\infty} \sum_{m\geq 0 } (-1)^{m+1} \frac{q^{\frac{3m(3m+1)}{2}}}{(q;q)_{3m+1}} = \sum_{k\geq 0 } \Jacobi{k+2}{3} \frac{q^k}{(q;q)_k}.
\end{equation}
\end{corollary}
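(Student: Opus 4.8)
The plan is to derive \eqref{eq:limit_3L_fin_Cap2} directly from the already-established relation \eqref{eq:limit_3L_of_fin_cap2_raw} by simplifying each side separately. On the left-hand side I would split the two residue-restricted sums by writing $m=3j$ and $m=3j+1$. The sign factors then reduce to $(-1)^{3j}=(-1)^j$ and $(-1)^{3j+1}=-(-1)^j$, so the two sums combine into a single sum over $j$:
\[
\sum_{\substack{m\geq 0 \\ m\equiv0\ (3)}}(-1)^m \frac{q^{m(m+1)/2}}{(q;q)_m} - \sum_{\substack{m\geq 0 \\ m\equiv1\ (3)}}(-1)^m \frac{q^{m(m+1)/2}}{(q;q)_m} = \sum_{j\geq 0}(-1)^j\left( \frac{q^{3j(3j+1)/2}}{(q;q)_{3j}} + \frac{q^{(3j+1)(3j+2)/2}}{(q;q)_{3j+1}} \right).
\]

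The heart of the argument is to merge the two terms inside the bracket. Using the Pochhammer recurrence in the form $1/(q;q)_{3j}=(1-q^{3j+1})/(q;q)_{3j+1}$ together with the exponent identity $\tfrac{3j(3j+1)}{2}+(3j+1)=\tfrac{(3j+1)(3j+2)}{2}$, the expansion of the first term splits it into $q^{3j(3j+1)/2}/(q;q)_{3j+1}$ minus $q^{(3j+1)(3j+2)/2}/(q;q)_{3j+1}$, and the latter cancels the second term; each bracket therefore collapses to $q^{3j(3j+1)/2}/(q;q)_{3j+1}$. Hence the left-hand side of \eqref{eq:limit_3L_of_fin_cap2_raw} equals
\[
\frac{(q;q)_\infty}{(q^3;q^3)_\infty}\sum_{j\geq 0}(-1)^{j}\frac{q^{3j(3j+1)/2}}{(q;q)_{3j+1}}.
\]

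For the right-hand side I would invoke the stated Jacobi-symbol reflection $\Jacobi{1-k}{3}=-\Jacobi{k+2}{3}$, which holds because $1-k\equiv-(k+2)\mod{3}$ and $\Jacobi{-1}{3}=-1$; this rewrites the right-hand side of \eqref{eq:limit_3L_of_fin_cap2_raw} as $-\sum_{k\geq 0}\Jacobi{k+2}{3}q^k/(q;q)_k$. Multiplying the resulting identity through by $-1$, which turns $(-1)^{j}$ into $(-1)^{j+1}$ on the left and clears the sign on the right, and finally renaming $j\mapsto m$, yields \eqref{eq:limit_3L_fin_Cap2} verbatim. I do not expect a genuine obstacle here; the only point requiring care is the one-line verification of the exponent identity $\tfrac{3j(3j+1)}{2}+3j+1=\tfrac{(3j+1)(3j+2)}{2}$ that drives the cancellation, everything else being routine bookkeeping of signs and indices.
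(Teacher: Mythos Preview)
Your proposal is correct and follows precisely the route sketched in the paper: start from \eqref{eq:limit_3L_of_fin_cap2_raw}, rewrite the two residue-restricted sums via $m=3j$ and $m=3j+1$, combine them using $(q;q)_{3j}^{-1}=(1-q^{3j+1})/(q;q)_{3j+1}$ and the exponent identity, and apply $\Jacobi{1-k}{3}=-\Jacobi{k+2}{3}$ on the right. The paper only states these two steps in one sentence, so your write-up simply supplies the omitted bookkeeping; there is no methodological difference.
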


Similar to Corollary~\ref{cor:limit_3L_fin_Cap2}, we can take the limit of \eqref{eq:dua_fin_Cap2} after $L\mapsto 3L+2$ and $L\mapsto 3L+1$. These considerations imply the two identities of the following corollary, respectively.

\begin{corollary}\label{cor:limit_3L+2and+1_fin_Cap2}
\begin{align}
\label{eq:limit_3L+2_fin_Cap2}\frac{(q;q)_\infty}{(q^3;q^3)_\infty} \sum_{m\geq 0 } (-1)^m \frac{q^{\frac{(3m+1)(3m+2)}{2}}}{(q;q)_{3m+2}} &= \sum_{k\geq 0 } \Jacobi{k}{3} \frac{q^k}{(q;q)_k},\\
\label{eq:limit_3L+1_fin_Cap2}\frac{(q;q)_\infty}{(q^3;q^3)_\infty} \sum_{m\geq 0 } (-1)^m \frac{q^{\frac{(3m)(3m-1)}{2}}}{(q;q)_{3m}} &= \sum_{k\geq 0 } \Jacobi{k+1}{3} \frac{q^k}{(q;q)_k}.
\end{align}
\end{corollary}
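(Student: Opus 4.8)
The plan is to carry out, \emph{mutatis mutandis}, the same limiting argument that takes \eqref{eq:dua_fin_Cap2} to Corollary~\ref{cor:limit_3L_fin_Cap2}, only now with the substitutions $L\mapsto 3L+2$ and $L\mapsto 3L+1$ in place of $L\mapsto 3L$. First I would substitute $L\mapsto 3L+2$ into \eqref{eq:dua_fin_Cap2} and let $L\to\infty$. As in the $L\mapsto 3L$ case, every summand on the left-hand side with $n\geq 1$ carries the factor $q^{(L+1)n}$, which tends to $0$ for $|q|<1$, so only the $n=0$ terms survive; simultaneously the quotient $(q;q)_L/(q^3;q^3)_{(L-2m)/3}$ tends to $(q;q)_\infty/(q^3;q^3)_\infty$, pulling out the prefactor common to both sides. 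With $n=0$ the congruence conditions $L\equiv 2m$ and $L\equiv 2m+1 \imod 3$, evaluated at $L\equiv 2\imod 3$, force $m\equiv 1\imod 3$ in the first sum and $m\equiv 2\imod 3$ in the second. For $L\mapsto 3L+1$ the same reasoning leaves $m\equiv 2\imod 3$ and $m\equiv 0\imod 3$, respectively.

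For the right-hand side I would, before taking any limit, make the substitution $k=(3L+2)-j$ (respectively $k=(3L+1)-j$), exactly as the $k=3L-j$ step that precedes \eqref{eq:limit_3L_of_fin_cap2_raw}. Then $q^{L-j}$ in the shifted identity becomes $q^{k}$, the coefficient ${2(3L+2)\brack(3L+2)-j}_q$ tends to $1/(q;q)_k$ by the limit recorded in Section~\ref{Sec:Background}, and the Jacobi symbol collapses: for $L\mapsto 3L+2$ one has $\Jacobi{j+1}{3}=\Jacobi{3L+3-k}{3}=\Jacobi{-k}{3}=-\Jacobi{k}{3}$, using $3\mid 3L+3$ and $\Jacobi{-1}{3}=-1$, while for $L\mapsto 3L+1$ one gets $\Jacobi{j+1}{3}=\Jacobi{2-k}{3}=-\Jacobi{k+1}{3}$. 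This reproduces the right-hand sides of \eqref{eq:limit_3L+2_fin_Cap2} and \eqref{eq:limit_3L+1_fin_Cap2} up to an overall sign that is absorbed once the left-hand side is consolidated.

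The one step that is not a verbatim repetition is the final consolidation of the two surviving residue-class sums on the left into the single closed series displayed in Corollary~\ref{cor:limit_3L+2and+1_fin_Cap2}; this is the analogue of the compressed ``combining the terms'' remark in the proof of Corollary~\ref{cor:limit_3L_fin_Cap2}, and I expect it to be the main (though still elementary) obstacle. Writing $m=3t+r$ in each class and using $(-1)^{3t+r}=(-1)^{t+r}$, the difference of the two sums pairs the exponents $(3t+1)(3t+2)/2$ and $(3t+2)(3t+3)/2$, which differ exactly by $3t+2$; the factor $1-q^{3t+2}$ that results combines with the Pochhammer shift $(q;q)_{3t+2}=(1-q^{3t+2})(q;q)_{3t+1}$ to collapse the $L\mapsto 3L+2$ difference term-by-term onto $\sum_{m\geq0}(-1)^m q^{(3m+1)(3m+2)/2}/(q;q)_{3m+2}$. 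For $L\mapsto 3L+1$ the identical device, via $(q;q)_{3t}=(1-q^{3t})(q;q)_{3t-1}$ together with a shift of the summation index $t\mapsto t+1$, merges the two sums into $\sum_{m\geq0}(-1)^m q^{3m(3m-1)/2}/(q;q)_{3m}$. Once this bookkeeping is verified, matching it against the simplified right-hand sides finishes both identities.
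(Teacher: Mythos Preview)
Your proposal is correct and follows exactly the route the paper intends: the paper's entire argument for this corollary is the single sentence ``similar to Corollary~\ref{cor:limit_3L_fin_Cap2}, we can take the limit of \eqref{eq:dua_fin_Cap2} after $L\mapsto 3L+2$ and $L\mapsto 3L+1$,'' and you have simply written out those details. The only imprecision is the phrase ``the factor $1-q^{3t+2}$ that results'' in the consolidation step---in fact one gets $1+\dfrac{q^{3t+2}}{1-q^{3t+2}}=\dfrac{1}{1-q^{3t+2}}$, which then absorbs into $(q;q)_{3t+1}$ to give $(q;q)_{3t+2}$---but your stated endpoint $\sum_{m\ge0}(-1)^m q^{(3m+1)(3m+2)/2}/(q;q)_{3m+2}$ (and its companion for $L\mapsto 3L+1$) is correct, so this is only a wording issue.
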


We can combine and rewrite the equations \eqref{eq:limit_3L_fin_Cap2}-\eqref{eq:limit_3L+1_fin_Cap2} as
\begin{equation*}
\sum_{k\geq 0} \Jacobi{k+b}{3} \frac{q^k}{(q;q)_k} = \frac{(q;q)_\infty}{(q^3;q^3)_\infty} \sum_{m\geq 0 } (-1)^{m+1} \Jacobi{m-b}{3} \frac{q^{\frac{m(m+1)}{2}}}{(q;q)_{m}},
\end{equation*} where $b=0,1,2$. This identity can also be proven directly by appeal to the $q$-binomial theorem \eqref{eq:qBin_thm}.

Corollaries~\ref{cor:limit_3L_fin_Cap2} and \ref{cor:limit_3L+2and+1_fin_Cap2} can be interpreted as weighted partition theorems. For example, \eqref{eq:limit_3L+2_fin_Cap2} has the following partition theoretic interpretation.

\begin{theorem}\label{thm:Weighted_1} Let
\begin{align*}
P_1 &:= \{ \pi \in D\, :\, \#(\pi)\not\equiv 0\text{ (mod }3)\},\\
P_2 &:= \{ \pi=(\lambda_1,\dots) \in P\, :\,  3\not|\, \lambda_1\lambda_2\dots\lambda_{\#(\pi)}\},\\
P_3 &:=  \{ \pi \in P\, :\, \#(\pi)\not\equiv 0\text{ (mod }3)\},\\
\end{align*}
then
\begin{equation}\label{eq:abstract_weighted_1}
\sum_{\pi\in P_1} (-1)^{\mu(\pi)}q^{|\pi|} = \sum_{(\pi_1,\pi_2)\in P_2\times P_3} (-1)^{\sigma(\pi_2)} q^{|\pi_1|+|\pi_2|},
\end{equation}
where $\sigma(\pi)$ is 1 if $\#(\pi)\equiv 2$ (mod 3) and 0 otherwise, and $\mu(\pi) := \#(\pi)+\sigma(\pi)+1$.
\end{theorem}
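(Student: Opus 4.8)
The plan is to turn the analytic identity \eqref{eq:limit_3L+2_fin_Cap2} into the combinatorial statement \eqref{eq:abstract_weighted_1} by interpreting each side of \eqref{eq:abstract_weighted_1} as an explicit $q$-series, and then matching the two sides against the two sides of \eqref{eq:limit_3L+2_fin_Cap2} after clearing the common factor $\frac{(q;q)_\infty}{(q^3;q^3)_\infty}$. Throughout I would use the standard facts that partitions with exactly $k$ parts are generated by $\frac{q^k}{(q;q)_k}$ and that partitions into exactly $k$ \emph{distinct} parts are generated by $\frac{q^{k(k+1)/2}}{(q;q)_k}$.

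First I would treat the right-hand side of \eqref{eq:abstract_weighted_1}. Since the summand factors over $\pi_1$ and $\pi_2$, the double sum is a product of two generating functions. Membership in $P_2$ forbids any part divisible by $3$ (the product $\lambda_1\cdots\lambda_{\#(\pi)}$ is divisible by $3$ exactly when some part is), so $\sum_{\pi_1\in P_2}q^{|\pi_1|}=\prod_{3\nmid n}(1-q^n)^{-1}=\frac{(q^3;q^3)_\infty}{(q;q)_\infty}$. For $P_3$, grouping by $k=\#(\pi_2)\not\equiv 0\ (\mathrm{mod}\ 3)$ and noting that $(-1)^{\sigma(\pi_2)}=\Jacobi{k}{3}$ in both residue classes, I get $\sum_{\pi_2\in P_3}(-1)^{\sigma(\pi_2)}q^{|\pi_2|}=\sum_{k\geq 0}\Jacobi{k}{3}\frac{q^k}{(q;q)_k}$. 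Hence the right-hand side of \eqref{eq:abstract_weighted_1} equals $\frac{(q^3;q^3)_\infty}{(q;q)_\infty}\sum_{k\geq 0}\Jacobi{k}{3}\frac{q^k}{(q;q)_k}$, which by \eqref{eq:limit_3L+2_fin_Cap2} is precisely $\sum_{m\geq 0}(-1)^m\frac{q^{(3m+1)(3m+2)/2}}{(q;q)_{3m+2}}$.

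It then remains to show the left-hand side of \eqref{eq:abstract_weighted_1} equals this same single sum. Grouping $\pi\in P_1$ by $k=\#(\pi)$, I would first carry out the sign bookkeeping: for $k=3m+1$ one has $\sigma=0$ and $\mu=k+1$, while for $k=3m+2$ one has $\sigma=1$ and $\mu=k+2$, and in both cases $(-1)^{\mu}=(-1)^m$. Thus $\sum_{\pi\in P_1}(-1)^{\mu(\pi)}q^{|\pi|}=\sum_{m\geq 0}(-1)^m\bigl(\frac{q^{(3m+1)(3m+2)/2}}{(q;q)_{3m+1}}+\frac{q^{(3m+2)(3m+3)/2}}{(q;q)_{3m+2}}\bigr)$. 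Using $(q;q)_{3m+2}=(q;q)_{3m+1}(1-q^{3m+2})$ together with the exponent check $\frac{(3m+1)(3m+2)}{2}+(3m+2)=\frac{(3m+2)(3m+3)}{2}$, each bracket telescopes to $\frac{q^{(3m+1)(3m+2)/2}}{(q;q)_{3m+2}}$, which matches the target and completes the proof.

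The argument is mostly a bookkeeping exercise once the combinatorial dictionary is fixed; the only place requiring genuine care is the sign analysis, namely verifying that the definitions of $\sigma$ and $\mu$ conspire to give the two consecutive residue classes $k\equiv 1,2\ (\mathrm{mod}\ 3)$ the \emph{same} sign $(-1)^m$, and that the resulting pair of distinct-partition generating functions collapses to the single summand appearing in \eqref{eq:limit_3L+2_fin_Cap2}. Everything else is routine, and the sole nontrivial input — the identity \eqref{eq:limit_3L+2_fin_Cap2} — has already been established.
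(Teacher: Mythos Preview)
Your argument is correct and follows essentially the same route as the paper: both proofs interpret the right side of \eqref{eq:abstract_weighted_1} as $\frac{(q^3;q^3)_\infty}{(q;q)_\infty}\sum_{k\geq 0}\Jacobi{k}{3}\frac{q^k}{(q;q)_k}$, interpret the left side by grouping distinct partitions according to $\#(\pi)\bmod 3$, and reduce everything to the already-established identity \eqref{eq:limit_3L+2_fin_Cap2}. The only difference is cosmetic: the paper simply records the rewritten form of \eqref{eq:limit_3L+2_fin_Cap2} with the left side split over the residue classes $m\equiv 1,2\pmod 3$, whereas you carry out explicitly the telescoping $\frac{q^{(3m+1)(3m+2)/2}}{(q;q)_{3m+1}}+\frac{q^{(3m+2)(3m+3)/2}}{(q;q)_{3m+2}}=\frac{q^{(3m+1)(3m+2)/2}}{(q;q)_{3m+2}}$ that justifies this rewriting.
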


For example, for partitions of 3, the left-hand side of \eqref{eq:abstract_weighted_1} counts the parititons $(3)$ and $(2,1)$. Both of these partitions are counted with positive weight and grants the total count of 2. On the right-hand side we consider the following pair of partitions with a total size of 3 and their respective weights $\omega$: 
\[\begin{array}{cc|cc|cc}
(\pi_1,\pi_2)\in P_2\times P_3 & \omega & (\pi_1,\pi_2)\in P_2\times P_3 & \omega& (\pi_1,\pi_2)\in P_2\times P_3 & \omega\\\hline
\left( (2), (1) \right) & 1 & \left( (1,1), (1) \right) & 1 & \left( (1), (2)\right) & 1 \\
\left( (1), (1,1) \right) & -1 & \left( \emptyset, (3) \right) & 1 & \left( \emptyset, (2,1)\right) & -1 \\
\end{array}
\]
The total of all these weights is also 2.

To prove Theorem~\ref{thm:Weighted_1}, we rewrite \eqref{eq:limit_3L+2_fin_Cap2} as \[\sum_{\substack{m\geq 0\\ m\equiv 2\mod{3} }} (-1)^m \frac{q^{m(m+1)/2}}{(q;q)_m} -\sum_{\substack{m\geq 0\\ m\equiv 1\mod{3} }} (-1)^m \frac{q^{m(m+1)/2}}{(q;q)_m} = \frac{1}{(q,q^2;q^3)_\infty} \sum_{k\geq 0} \Jacobi{k}{3} \frac{q^k}{(q;q)_k}.
\]

In light of \eqref{eq:GF_P_D}, the left-hand side sums are the generating functions for distinct partitions with number of parts $\equiv 2$ modulo 3 and $\equiv 1$ modulo 3 parts, respectively, where $-1$ is raised to the number of parts with one extra negative sign for partitions with number of parts congruent to 1 modulo 3. On the right-hand side, it is clear that the $(q,q^2;q^3)_\infty^{-1}$ is the generating function for partitions where no part is divisible by 3. The last sum on the far right is the generating function for partitions with number of parts $\not\equiv 0$ modulo 3 where partitions with 2 modulo 3 number of parts are weighted with a $-1$ coming from the Jacobi symbol.

Similarly, \eqref{eq:limit_3L_fin_Cap2} and \eqref{eq:limit_3L+1_fin_Cap2} can be interpreted as the two following weighted partition theorems, respectively.
\begin{theorem}\label{thm:Weighted_2} Let
\[
P^*_1 := \{ \pi \in D\, :\, \#(\pi)\not\equiv 2\text{ (mod }3)\},\text{  and  }
P^*_3 :=  \{ \pi \in P\, :\, \#(\pi)\not\equiv 1\text{ (mod }3)\},
\]
then
\begin{equation*}%\label{eq:abstract_weighted_2}
\sum_{\pi\in P^*_1} (-1)^{\mu^*(\pi)}q^{|\pi|} = \sum_{(\pi_1,\pi_2)\in P_2\times P^*_3} (-1)^{\sigma^*(\pi_2)} q^{|\pi_1|+|\pi_2|},
\end{equation*}
where $\sigma^*(\pi)$ is 1 if $\#(\pi)\equiv 0$ (mod 3) and 0 otherwise, and $\mu^*(\pi) := \#(\pi)+\sigma^*(\pi)$.
\end{theorem}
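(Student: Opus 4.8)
The plan is to prove Theorem~\ref{thm:Weighted_2} exactly as Theorem~\ref{thm:Weighted_1} is handled: interpret each side of the claimed relation as a generating function via \eqref{eq:GF_P_D}, and then recognize the resulting analytic identity as the $b=2$ instance of the combined form of \eqref{eq:limit_3L_fin_Cap2}--\eqref{eq:limit_3L+1_fin_Cap2} already recorded above. The whole argument is thus a matter of careful generating-function bookkeeping on top of an identity that is already established.

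First I would rewrite the right-hand side. Since $(\pi_1,\pi_2)$ ranges independently over $P_2\times P^*_3$ and the weight $(-1)^{\sigma^*(\pi_2)}$ depends only on $\pi_2$ while the size is additive, the double sum factors as
\[
\sum_{(\pi_1,\pi_2)\in P_2\times P^*_3}(-1)^{\sigma^*(\pi_2)}q^{|\pi_1|+|\pi_2|}=\Bigl(\sum_{\pi_1\in P_2}q^{|\pi_1|}\Bigr)\Bigl(\sum_{\pi_2\in P^*_3}(-1)^{\sigma^*(\pi_2)}q^{|\pi_2|}\Bigr).
\]
Because $P_2$ consists precisely of the partitions with no part divisible by $3$, the first factor is $1/(q,q^2;q^3)_\infty=(q^3;q^3)_\infty/(q;q)_\infty$. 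For the second factor I would use that the generating function for partitions with exactly $k$ parts is $q^k/(q;q)_k$; summing over $k\not\equiv1\pmod 3$ with the weight $(-1)^{\sigma^*}$, which equals $-1$ for $k\equiv0$ and $+1$ for $k\equiv2\pmod3$, reproduces $\sum_{k\geq0}\Jacobi{k+2}{3}\frac{q^k}{(q;q)_k}$, since $\Jacobi{k+2}{3}$ is $-1,0,+1$ according as $k\equiv0,1,2\pmod3$.

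Next I would rewrite the left-hand side. The generating function for distinct partitions with exactly $m$ parts is $q^{m(m+1)/2}/(q;q)_m$. Restricting to $\#(\pi)\not\equiv2\pmod3$ and inserting $(-1)^{\mu^*(\pi)}=(-1)^{\#(\pi)+\sigma^*(\pi)}$ — which is $(-1)^{m+1}$ for $m\equiv0$ and $(-1)^{m}$ for $m\equiv1\pmod3$ — I would identify the left-hand side with $\sum_{m\geq0}(-1)^{m+1}\Jacobi{m+1}{3}\frac{q^{m(m+1)/2}}{(q;q)_m}$, using that $\Jacobi{m+1}{3}$ equals $1,-1,0$ according as $m\equiv0,1,2\pmod3$.

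Finally I would combine the two computations. The identity to be proven has then been reduced to
\[
\sum_{m\geq0}(-1)^{m+1}\Jacobi{m+1}{3}\frac{q^{m(m+1)/2}}{(q;q)_m}=\frac{(q^3;q^3)_\infty}{(q;q)_\infty}\sum_{k\geq0}\Jacobi{k+2}{3}\frac{q^k}{(q;q)_k},
\]
which, upon clearing the infinite product and noting $\Jacobi{m+1}{3}=\Jacobi{m-2}{3}$, is exactly the $b=2$ case of the combined identity $\sum_{k\geq0}\Jacobi{k+b}{3}\frac{q^k}{(q;q)_k}=\frac{(q;q)_\infty}{(q^3;q^3)_\infty}\sum_{m\geq0}(-1)^{m+1}\Jacobi{m-b}{3}\frac{q^{m(m+1)/2}}{(q;q)_m}$ stated above. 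As that identity is already proven (from \eqref{eq:limit_3L_fin_Cap2} or directly via the $q$-binomial theorem \eqref{eq:qBin_thm}), the proof is complete. The one genuinely delicate point — and the reason it is advantageous to pass through the combined form rather than the raw \eqref{eq:limit_3L_fin_Cap2} — is the sign bookkeeping: one must check that the combinatorial weights $(-1)^{\mu^*}$ and $(-1)^{\sigma^*}$ collapse onto the Jacobi-symbol weights $\Jacobi{m+1}{3}$ and $\Jacobi{k+2}{3}$, including matching the residue classes at which each weight vanishes, so that the distinct-partition side acquires the clean single-sum shape $\sum_{m}(-1)^{m+1}\Jacobi{m-b}{3}q^{m(m+1)/2}/(q;q)_m$.
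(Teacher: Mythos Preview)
Your proposal is correct and follows essentially the same approach as the paper: the paper states that Theorem~\ref{thm:Weighted_2} is the partition-theoretic interpretation of \eqref{eq:limit_3L_fin_Cap2} (equivalently, the $b=2$ case of the combined identity), and your argument carries out precisely that interpretation with the generating-function bookkeeping made explicit. Your sign checks---that $(-1)^{\mu^*}$ and $(-1)^{\sigma^*}$ collapse to $(-1)^{m+1}\Jacobi{m-2}{3}$ and $\Jacobi{k+2}{3}$ respectively---are exactly the content of the word ``Similarly'' in the paper.
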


\begin{theorem}\label{thm:Weighted_3} Let
\[
P'_1 := \{ \pi \in D\, :\, \#(\pi)\not\equiv 1\text{ (mod }3)\}\text{  and  }
P'_3 :=  \{ \pi \in P\, :\, \#(\pi)\not\equiv 2\text{ (mod }3)\},\\
\]
then
\begin{equation*}%\label{eq:abstract_weighted_3}
\sum_{\pi\in P'_1} (-1)^{\mu^*(\pi)}q^{|\pi|} = \sum_{(\pi_1,\pi_2)\in P_2\times P'_3} (-1)^{\sigma^*(\pi_2)} q^{|\pi_1|+|\pi_2|},
\end{equation*}
where $\sigma^*(\pi)$ is 1 if $\#(\pi)\equiv 0$ (mod 3) and 0 otherwise, and $\mu^*(\pi) := \#(\pi)+\sigma^*(\pi)$.
\end{theorem}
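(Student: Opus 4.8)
The plan is to establish Theorem~\ref{thm:Weighted_3} by reading off generating functions from \eqref{eq:limit_3L+1_fin_Cap2}, in exact parallel with the argument given for Theorem~\ref{thm:Weighted_1} from \eqref{eq:limit_3L+2_fin_Cap2}. First I would put \eqref{eq:limit_3L+1_fin_Cap2} into the shape produced by the combined identity displayed just after Corollary~\ref{cor:limit_3L+2and+1_fin_Cap2} with $b=1$: dividing that identity through by $(q,q^2;q^3)_\infty=(q;q)_\infty/(q^3;q^3)_\infty$ and expanding the Jacobi symbol $\Jacobi{m-1}{3}$, which is supported on $m\equiv0$ and $m\equiv2\pmod3$, yields
\[
\sum_{m\equiv0\,(3)}(-1)^m\frac{q^{\frac{m(m+1)}{2}}}{(q;q)_m} - \sum_{m\equiv2\,(3)}(-1)^m\frac{q^{\frac{m(m+1)}{2}}}{(q;q)_m} = \frac{1}{(q,q^2;q^3)_\infty}\sum_{k\geq0}\Jacobi{k+1}{3}\frac{q^k}{(q;q)_k}.
\]

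Next I would interpret every factor through \eqref{eq:GF_P_D}. On the left, $q^{m(m+1)/2}/(q;q)_m$ is the generating function for distinct partitions with exactly $m$ parts, so the left-hand side is a signed generating function over $\pi\in D$ with $\#(\pi)\equiv0$ or $2\pmod3$, that is over $\pi\in P'_1$, in which the $m\equiv0$ class appears with sign $(-1)^m$ and the $m\equiv2$ class with sign $(-1)^{m+1}$. Comparing with the weight $(-1)^{\mu^*(\pi)}$, which equals $(-1)^{m+1}$ for $\#(\pi)\equiv0$ (here $\sigma^*=1$) and $(-1)^m$ for $\#(\pi)\equiv2$ (here $\sigma^*=0$), shows the left-hand side equals $-\sum_{\pi\in P'_1}(-1)^{\mu^*(\pi)}q^{|\pi|}$. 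On the right, $(q,q^2;q^3)_\infty^{-1}$ is the generating function for $P_2$, the partitions none of whose parts is divisible by $3$, while $q^k/(q;q)_k$ generates ordinary partitions with exactly $k$ parts; since $\Jacobi{k+1}{3}$ equals $+1,-1,0$ for $k\equiv0,1,2\pmod3$, the $k$-sum runs over $\pi_2\in P'_3$ (those with $\#(\pi_2)\not\equiv2$) with the $k\equiv0$ class weighted $+1$ and the $k\equiv1$ class weighted $-1$. Since $(-1)^{\sigma^*(\pi_2)}$ gives instead $-1$ for $\#(\pi_2)\equiv0$ and $+1$ for $\#(\pi_2)\equiv1$, the right-hand side equals $-\sum_{(\pi_1,\pi_2)\in P_2\times P'_3}(-1)^{\sigma^*(\pi_2)}q^{|\pi_1|+|\pi_2|}$.

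The only bookkeeping is the resulting global factor of $-1$, which appears on both sides of the displayed identity and therefore cancels, leaving precisely the assertion of Theorem~\ref{thm:Weighted_3}. I expect the sole (and mild) obstacle to be keeping the three independent sources of signs consistent through the residue-class expansion, namely the alternating factor $(-1)^m$, the $\sigma^*$-weight, and the two Jacobi symbols $\Jacobi{m-1}{3}$ and $\Jacobi{k+1}{3}$; once the expansion above is written out the matching is term-by-term, and no analytic input beyond \eqref{eq:limit_3L+1_fin_Cap2}, the combined identity, and \eqref{eq:GF_P_D} is needed. As a confirmation I would check both weighted counts at a small value of $|\pi|$, just as in the worked example following Theorem~\ref{thm:Weighted_1}.
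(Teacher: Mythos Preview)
Your proposal is correct and follows exactly the route the paper indicates: the paper states that Theorem~\ref{thm:Weighted_3} is obtained from \eqref{eq:limit_3L+1_fin_Cap2} by the same generating-function reading used to deduce Theorem~\ref{thm:Weighted_1} from \eqref{eq:limit_3L+2_fin_Cap2}, and you have carried out that analogue in full, including the residue-class expansion via the combined identity with $b=1$ and the cancellation of the global sign. Your sign bookkeeping for $\mu^*$ and $\sigma^*$ on both sides matches the intended weights.
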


\section{Outlook}

Although the study here is systematic and seemingly complete, there are still some interesting leads to be explored and missing cases to be found. For example, we could not find a result analogous to \eqref{eq:Fin_Sum_of_capparelli's} for Theorem~\ref{thm:New_Fin_Cap}. Also the simple extension of \eqref{eq:Fin_Cap1_Binomial} analogous to \eqref{eq:hierarchy_Cap1_alternate} does not seem to exist.

However, the missing applications of Bailey's lemma to Theorem~\ref{thm:dual_fin_Cap1} is not an oversight. Instead it is a deliberate choice that we make. Applying Theorem~\ref{thm:Bailey} to Theorem~\ref{thm:dual_fin_Cap1} yields the same polynomial right-hand sides the application of Bailey's lemma to Theorem~\ref{thm:New_Fin_Cap} with messier polynomials on the left-hand side.

\section*{Acknowledgements}

The authors would like to thank Krishnaswami Alladi, George E. Andrews, Peter Paule, and Wadim Zudilin for their genuine interest, encouragement, and helpful comments.

\end{document}